\pgfplotsset{compat=1.17}
\newtheorem{theorem}{Theorem}
\newtheorem{lemma}{Lemma}
\newtheorem{proposition}{Proposition}
\newtheorem{definition}{Definition}
\newtheorem{remark}{Remark}
\tikzset{every picture/.style={line width=0.75pt}} 
\numberwithin{equation}{section}
\title[Inverse Quasilinear Conductivities of CTA Manifolds]{The Calderón Problem for Quasilinear Conductivities of Conformally Transversally Anisotropic Media} 
\author[X. Chen]{Xi Chen}
\address{{Shanghai Center for Mathematical Sciences, Fudan University, Shanghai 200438, China;	
		Center for Applied Mathematics, Fudan University, Shanghai 200433, China;	
		School of Mathematical Sciences, Fudan University, Shanghai 200433, China. } }
\email{xi\_chen@fudan.edu.cn}
\author[Z. Jin]{Ziyun Jin}
\address{Shanghai Center for Mathematical Sciences, Fudan University, Shanghai 200438, China. }
\email{22110180023@m.fudan.edu.cn}
\date{}
\begin{document}

\maketitle

\begin{abstract}
   This paper investigates Calderón's problem on a conformally transversally anisotropic manifold $ (M,g) $ of dimension $n \geq 3$, where the conductivity $ a(s,x,p) $ might depend on both the electric potential and the electric field. We establish that for all $(t,x)\in \mathbb{R}\times M$ and $\beta \in \mathbb{N}^{1+n}$ the derivatives $ \partial_{(s,p)}^\beta a(s,x,p)|_{(s,p)=(t,0)}$  are uniquely determined by the boundary voltage-current measurements. If $ a(s,x,p) $ is analytic in $ p $, then $ a(s,x,p) $ can be uniquely recovered. 
\end{abstract}\par

\tableofcontents

\section{Introduction}

The \textit{Electrical Impedance Tomography} (EIT) problem, also known as \textit{Calderón's problem}, is to recover the interior electrical conductivity of a medium by making current and voltage measurements at its boundary. 
\par
Calderón \cite{MR590275} formulated the prototype of the inverse boundary value problem models for the EIT problem. Generally, we let $ \Omega\subset \mathbb{R}^n $ be a bounded domain with smooth boundary $ \partial\Omega $, and consider the following Dirichlet problem of divergence-type quasilinear elliptic equations:
\begin{equation}\label{eq:quasi_gen}
    \left\{\begin{aligned}
    \partial_i(\gamma^{ij} (u,x,\nabla u)\partial_j u)  &= 0 \quad\text{in}~\Omega,\\
    u                     &= f \quad\text{on}~ \partial \Omega,
    \end{aligned}\right.
\end{equation}
where the coefficient 
\begin{equation}\label{def:gamma}
    \gamma:=(\gamma^{ij}(s,x,p))\in C^{2,\alpha}(\mathbb{R}\times \Omega\times\mathbb{R}^n,\text{\rm Mat}_{n}(\mathbb{R}))
\end{equation}
is symmetric and uniformly elliptic, in the sense
\begin{equation}\label{condition:elliptic}
    \gamma^{ij} (\lambda,x,0)\xi_i\xi_j \geq C(\lambda)\delta^{ij} \xi_i\xi_j>0,\quad\forall\lambda\in\mathbb{R},~(\xi_i)_{i=1}^{n}\in\mathbb{R}^n \backslash \{0\}.
\end{equation}
The matrix $ \gamma  $ models the conductivity of the underlying medium, which is anisotropic and possibly depends on the electrical potential and electrical field.\par

The voltage-current measurement is represented by the \textit{Dirichlet-to-Neumann map}. For any $0<\alpha<1$, we denote the ball in $ C^{2,\alpha} (\partial \Omega) $ centered at $ f_0\in C^{2,\alpha} (\partial \Omega) $ with radius $ \delta_0>0 $ by
\begin{equation}\label{eq:def-domain}
    B^{2,\alpha}_{\partial \Omega} (f_0,\delta_0):=\{f \in  C^{2,\alpha} (\partial \Omega): \|f-f_0\|_{C^{2,\alpha} (\partial \Omega)} \leq \delta_0\}.
\end{equation}
A standard argument of implicit function theorem proves that there exists a map $ \delta:\mathbb{R}\to\mathbb{R}^+ $ such that for any $ f\in B^{2,\alpha}_{\partial \Omega} (\lambda,\delta(\lambda)) $, (\ref{eq:quasi_gen}) admits a unique solution $ u^f $ near $ \lambda $. See for example Lemma \ref{lmm:forward}. Then, the corresponding \textit{DN map} (short for Dirichlet-to-Neumann map) is defined as follows:
\begin{displaymath}
    \begin{aligned}
        N_\gamma:  \bigcup_{\lambda\in\mathbb{R}}  B^{2,\alpha}_{\partial \Omega} (\lambda,\delta(\lambda))&\longrightarrow C^{1,\alpha}(\partial \Omega)\otimes\Omega^{n-1}(\partial \Omega),\\ 
        f & \longmapsto \gamma^{ij}\left(u^f(x),x,\nabla u^f(x)\right) \partial_j u^f(x)\nu_i.
    \end{aligned}
\end{displaymath}
Here $ \nu_i=\nu_i(x) $ denotes the differential form $\iota^*(\partial_i \llcorner dx^1\wedge dx^2\wedge\cdots\wedge dx^n) $, where $ \iota:\partial \Omega\to \overline{\Omega} $ is the standard embedding. In local coordinates, $ \nu_i=n_i d\mbox{Area}_{\partial \Omega}  $, where $ n_i $  is the $ i $-th component of the unit outer normal vector on $ \partial \Omega $. Therefore, Calderón's problem reduces to an inverse problem of (\ref{eq:quasi_gen}): is it possible to recover the anisotropic conductivity $ \gamma=(\gamma^{ij}) $ uniquely from the information of $ N_\gamma $? \par
It is well-known that the DN map is coordinate-invariant, and whence the best one can achieve is the unique determination of $ \gamma $ up to the following gauge transform induced by coordinate changing.
\begin{definition}[Gauge transform]
    For a $C^{3,\alpha}$-diffeomorphism $\Phi\in\text{\rm Diff}(\overline{\Omega})$,  the \textbf{gauge transform} $ H_\Phi$ on $C^{2,\alpha}(\mathbb{R}\times \Omega\times\mathbb{R}^n,\text{\rm Mat}_{n}(\mathbb{R}))$ is defined  by
    \begin{equation}\label{def:gauge-transform}
        H_{\Phi}: K(s,x,p)\longmapsto \frac{D\Phi(\cdot)^T K(s,\cdot,D\Phi(\cdot)p)D\Phi(\cdot)}{|\det D\Phi(\cdot)|}\circ\Phi^{-1}(x). 
    \end{equation}
\end{definition}
The DN map is invariant under this gauge transform, that is, $ N_{H_\Phi\gamma}=N_{\gamma} $. See Lemma \ref{lmm:gauge-invariance} for details.\par

If $ \gamma $ is linear and isotropic, i.e., $ \gamma^{ij}(s,x,p)=\gamma(x)\delta^{ij}  $, Calder\'{o}n's problem is well-understood. The uniqueness of inversion is due to \cite{sylvester1987global} for $ n\geq 3 $, and \cite{nachman1996global} for $ n=2 $. See for example \cite{sylvester1988inverse,alessandrini1995local,alessandrini1997examples} for further development.\par 

If $ \gamma $ is linear and anisotropic, i.e., $ \gamma(s,x,p)=\gamma(x) $ has distinct eigenvalues, the EIT problem is far more involved. A key observation made in \cite{lee1989determining} linked anisotropic Calderón's problems with inverse coefficient problems of elliptic equations on a closed Riemannian manifold $ (M,g) $ with boundary $ \partial M $, and of dimension $ n\geq 3 $. Explicitly, the metric $ g $ is given by 
\begin{equation*}
    (g_{ij}) =(\det \gamma)^{\frac{1}{n-2}} (\gamma^{ij})^{-1},~\text{\rm or equivalently}~(\gamma^{ij})=\sqrt{\det g}(g_{ij} )^{-1}.
\end{equation*}
When $ g $ is analytic, the unique retrieval of $ \gamma $ was proved in \cite{lassas2001determining,lassas2003dirichletneumann,lassas2020poisson}. When $ g $ is \textit{conformally transversally anisotropic} (see Definition \ref{def:cta-metric}) in some given conformal class, \cite{dossantosferreira2009limiting,dossantosferreira2016calderon,dossantosferreira2020linearized} reconstructed the conformal factor of $ g $ on certain conditions. In general, the anisotropic Calderón problem remains an open problem.\par

    
The conductivities of a medium might vary as electrical potentials and electric fields change. This corresponds to nonlinear conductivities $ \gamma(s,x,p) $ and nonlinear elliptic equations. For semilinear equations
\begin{equation}\label{eq:semi-linear}
    -\Delta u+a(u,x)=0,
\end{equation}
\cite{isakov1994global} developed the first-order linearization method to identify $ a(s,x) $ with some constraints on $ a $. \cite{lassas2020partial,feizmohammadi2020inverse,lassas2021inverse} recovered $ a(s,x) $ for $ a $ analytic in $ s $, by applying the higher-order linearization scheme originated in \cite{kurylev2018inverse} for nonlinear hyperbolic equations. Furthermore, \cite{munoz2020calderon,carstea2021calderon} extended this framework to inverse problems of quasilinear equation (\ref{eq:quasi_gen}) with $ \gamma^{ij}(s,x,p)=a(s,x,p)\delta^{ij}$, and \cite{liimatainen2024calderon} applied this to the general quasilinear anisotropic case in dimension $ n=2 $. See \cite{salo2022inverse,kian2023partial} for recent progress on partial data problems.\par

For practical applications, it is more meaningful but more challenging to determine anisotropic coefficients in nonlinear equations. For analytic coefficients $ \gamma^{ij}(s,x,p)=\gamma^{ij}(s,x)$, \cite{sun1997inverse} solved inverse problems of (\ref{eq:quasi_gen}) via the second-order linearization. When the electric field is independent of the electric potential, i.e., $ J(s,x,p)=J(x,p) $, \cite{kang2002identification, carstea2019reconstruction} reconstructed the coefficients of the Taylor expansion in $ p $ under various a priori conditions. For more recent results, see \cite{shankar2020recovering,carstea2021inverse,nurminen2023inverse,kholil2024uniqueness,carstea2024inverse}.\par

\par

In this article, we are interested in inverse problems on \textit{conformally transversally anisotropic} (CTA for short) manifolds, introduced by Dos Santos Ferreira, Kenig, Salo, and Uhlmann in \cite{dossantosferreira2009limiting}.
\begin{definition}[CTA manifold]\label{def:cta-metric}
    A Riemannian manifold $(M^n,g)$ with boundary is called a conformally transversally anisotropic manifold, if there exists a Riemannian manifold $ (M_0^{n-1},g_0) $ such that $(\overline{M},g)\subset (\mathbb{R}\times M_0,c \cdot (e\oplus g_0))$, where $ (\mathbb{R},e) $ is the real space with canonical metric, and $ c(x)>0 $ is the conformal factor. Moreover, if $ c(x)=1 $, then the metric is said to be \textbf{transversally anisotropic} (TA for short). If in addition, $ (M_0,g_0) $ is a simple manifold, which means that $\partial M_0$ is strictly convex and that $exp_p$ is a diffeomorphism onto $\overline{M_0}$ for all $p\in M_0$, then we call $ M $ a \textbf{simple CTA manifold}.
\end{definition}
Now suppose $ (M,g)\subset (\mathbb{R}\times M_0,c \cdot (e\oplus g_0)) $ is a compact CTA manifold with smooth boundary $ \partial M $. The quasilinear anisotropic conductivity $ \gamma $, associated with $ g $, takes the form
\begin{equation}\label{def-form}
    \gamma (s,x,p)=a(s,x,p)A (x),
\end{equation}
where $ a\in C^{2,\alpha}(\mathbb{R}\times TM) $ is strictly positive on $ \overline{M} $, and that
\begin{align}\label{def:corresponding_metric}
    A=\sqrt{\det g}\cdot g^{-1}.
\end{align}\par
We may rewrite the anisotropic conductivity $ \gamma(s,x,p) $ as
\begin{equation*}
    \gamma(s,x,p)= \frac{a(s,x,p)}{c(x)^{\frac{n-2}{2}} }\left(c(x)^{\frac{n-2}{2}}A(x)\right).
\end{equation*}
With $ A $ in (\ref{def-form}) replaced by $ c^{\frac{n-2}{2}}A $ and $ a(s,x,p) $ by $ c(x)^{-\frac{n-2}{2}}a(s,x,p) $, the equation (\ref{eq:quasi_gen}) remains unchanged. Meanwhile, the corresponding Riemannian metric to $ c^{\frac{n-2}{2}}A $ is exactly $ e\oplus g_0 $. In this sense, we may assume that $ c\equiv 1 $ without loss of generality. 

The analogue of (\ref{eq:quasi_gen}) on $ M $ reads
\begin{equation}\label{eq:quasi_intrinsic}
    \left\{\begin{aligned}
    \nabla^*(a(u,x,\nabla u)\nabla u)  &= 0 \quad\text{in}~ M,\\
    u                     &= f \quad\text{on}~ \partial M,
    \end{aligned}\right.
\end{equation}
where $ \nabla^* $ and $ \nabla $ denote the divergence and gradient associated with $ g $ respectively. Lemma \ref{lmm:forward} yields a well-defined DN map $ N_{\gamma}  $ for each $ \gamma $ in (\ref{def-form}) on $ \partial M $. The inverse problem of concern is to retrieve the information of $ \gamma $ from the knowledge of $ N_{\gamma}  $. Our main result in this article is the following:

\begin{theorem}[]\label{thm:main}
    Let $ (M,\partial M, g) $ be a compact simple CTA manifold with smooth boundary $ \partial M $. Write $ \gamma_m(s,x,p):= a_m(s,x,p)A(x)$, for $m=1,2 $ as in (\ref{def-form}) and (\ref{def:corresponding_metric}). If $ N_{\gamma_1}=N_{\gamma_2} $, then for all $ (t,x)\in\mathbb{R}\times M $ and $ \beta\in\mathbb{N}^{1+n}$, it holds that
    \begin{equation}\label{deriv-equal}
        \partial_{(s,p)}^{\beta}  a_1(s,x,p)\Big|_{(s,p)=(t,0)}  = \partial_{(s,p)}^{\beta} a_2(s,x,p)\Big|_{(s,p)=(t,0)}.
    \end{equation}
    If in addition, $ a_m(s,x,p) $ is analytic in $ p $, then $ a_1=a_2 $.
\end{theorem}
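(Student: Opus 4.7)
My strategy is a higher-order linearization at constant solutions combined with the complex-geometric-optics (CGO) construction of Dos Santos Ferreira--Kenig--Salo--Uhlmann on CTA manifolds. For each fixed $t\in\mathbb{R}$ I take boundary data $f_{\vec\epsilon}=t+\sum_{j=1}^{N}\epsilon_j h_j$ with $h_j\in C^{2,\alpha}(\partial M)$ and $\vec\epsilon$ small, apply the forward well-posedness Lemma \ref{lmm:forward}, and expand
\[
u^{f_{\vec\epsilon}}=t+\sum_j\epsilon_j v_j+\tfrac12\sum_{j,k}\epsilon_j\epsilon_k w_{jk}+\cdots .
\]
At every order $N\geq 1$ the resulting coefficient $V^{(N)}$ satisfies a linear equation of the form $\nabla^{*}\!\bigl(a(t,x,0)\,\nabla V^{(N)}\bigr)=F^{(N)}$ whose source $F^{(N)}$ is a universal polynomial in the lower-order coefficients $v_j,w_{jk},\dots$ and in the derivatives $\partial_{(s,p)}^{\beta}a(s,x,p)|_{(s,p)=(t,0)}$ with $|\beta|<N$. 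Repeatedly differentiating the identity $N_{\gamma_1}(f_{\vec\epsilon})=N_{\gamma_2}(f_{\vec\epsilon})$ in $\vec\epsilon$ at $\vec\epsilon=0$ then produces a tower of integral identities involving the unknown differences $\partial_{(s,p)}^{\beta}(a_1-a_2)|_{(t,0)}$ against products of first-order linearizations.

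\textbf{Base step.}  The first-order linearization $v_j$ solves $\nabla^{*}\bigl(a(t,x,0)\nabla v_j\bigr)=0$ with $v_j|_{\partial M}=h_j$. Differentiating $N_{\gamma_m}(f_{\vec\epsilon})$ once at $\vec\epsilon=0$ identifies the map $h_j\mapsto a(t,x,0)\,\partial_\nu v_j\cdot\mathrm{area}_{\partial M}$ with the DN map of the linear anisotropic conductivity $a(t,x,0)A(x)$. Via the Lee--Uhlmann correspondence this is the Laplace DN map of the conformal CTA metric $\tilde g_t:=a(t,\cdot,0)^{2/(n-2)}g$, which still lies in the class $[e\oplus g_0]$ with simple transversal $(M_0,g_0)$. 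The DKSU-type Calderón theorem for scalar conformal factors on simple CTA manifolds then gives $a_1(t,x,0)=a_2(t,x,0)$ for every $(t,x)$, a common value I denote $a_0(t,x)$.

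\textbf{Inductive step.}  Assume by induction that $\partial_{(s,p)}^{\beta}(a_1-a_2)(t,x,0)=0$ for all $|\beta|<N$. Then the $N$-th linearizations $V^{(N)}_m$ share the same Dirichlet data on $\partial M$ (which vanish for $N\geq 2$), and equality of DN maps forces matching Neumann data after absorbing all lower-order gauge contributions that already coincide by hypothesis. Pairing against one further first-order linearization $v_{N+1}$ and integrating by parts yields the integral identity
\[
\sum_{|\beta|=N}\int_M \partial_{(s,p)}^{\beta}(a_1-a_2)(t,x,0)\;P_\beta\!\bigl(v_1,\ldots,v_{N+1}\bigr)\,dV_g\;=\;0,
\]
valid for all choices of first-order solutions $v_1,\dots,v_{N+1}$, where $P_\beta$ is a universal polynomial in the $v_j$'s and their gradients. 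I would then substitute DKSU-type CGOs $v_j=e^{-\tau_j(x_1+i\psi_j)}(m_j+r_{\tau_j})$ for the linear equation with coefficient $a_0(t,\cdot)$, arrange the phases so that the total exponential weight vanishes in the limit $\tau_j\to\infty$, and read off a family of (possibly attenuated, possibly tensor-valued) geodesic ray transforms on $(M_0,g_0)$. Injectivity of these ray transforms on the simple manifold $(M_0,g_0)$ then forces $\partial_{(s,p)}^{\beta}(a_1-a_2)(t,x,0)=0$ for all $|\beta|=N$, closing the induction and proving (\ref{deriv-equal}).

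\textbf{Main obstacle and analyticity.}  The delicate point is the treatment of the $p$-derivatives: each $\partial_{p^l}$ differentiation in $\partial_{(s,p)}^{\beta}a$ contributes a factor $\partial_l v_j$ in $P_\beta$, so after the CGO limit one is no longer inverting merely the scalar geodesic ray transform on $(M_0,g_0)$ but also its tensor-valued analogue, of which only the solenoidal part is determined on general simple manifolds. Disentangling the scalar component therefore requires a careful choice of Carleman phases and amplitudes, using both $e^{-\tau(x_1+i\psi)}$ and $e^{\tau(x_1-i\psi)}$ CGOs with complementary directions and varying the eikonal phase $\psi$ over solutions on $(M_0,g_0)$, in the spirit of the amplitude corrections used in recent works on the anisotropic CTA problem. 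Once all Taylor coefficients at $p=0$ are pinned down, analyticity of $a_m(s,x,\cdot)$ at $p=0$ upgrades (\ref{deriv-equal}) to the pointwise identity $a_1=a_2$ on the domain of analyticity.
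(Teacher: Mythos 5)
Your overall skeleton (higher-order linearization at constant solutions, recovery of $a(t,x,0)$ via the DKSU conformal-factor result, then an induction on $|\beta|$ through integral identities tested against products of first-order solutions) matches the paper, and your base step is correct. But there are two genuine gaps where the proposal, as written, would not go through.

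First, the case $|\beta|=1$. The second-order identity \eqref{eq:2nd-order-2} contains only \emph{three} solutions and integrands of the form $\int_M\phi\,\langle\nabla u_1,\nabla u_2\rangle$. You cannot form two conjugate pairs of CGOs here, and if you pair two CGOs so that the exponential weights cancel, say $u_1=e^{-\tau(x^1+i\psi)}(\cdots)$ and $u_2=e^{\tau(x^1+i\psi)}(\cdots)$, the leading symbol is $-\tau^2\langle\nabla(x^1+i\psi),\nabla(x^1+i\psi)\rangle=-\tau^2\bigl(1-|\nabla\psi|_{g_0}^2\bigr)=0$ by the eikonal equation, so the expected $O(\tau^2)$ term degenerates and no ray transform is ``read off.'' The paper avoids CGOs entirely at this order: Proposition \ref{Prop-basic} is proved by inserting Alessandrini-type singular solutions \eqref{singular_solution} to obtain boundary determination of $\phi$, then propagating the vanishing into the interior via integration by parts, Lemma \ref{lmm:tangent}, the density Lemma \ref{lmm:density-cta}, and uniqueness for an elliptic Dirichlet problem; the coefficients $T_0^l$ of the $p$-derivatives are then eliminated by a polarization argument together with a Runge-type fact that $\nabla w^{h}(x_0)$ can realize any prescribed tangent vector. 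None of this is present in your plan.

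Second, for $|\beta|\geq 2$ you correctly identify the central obstruction --- the gradients produced by $p$-derivatives yield tensor-valued transforms, of which only the solenoidal part is generically recoverable --- but you defer its resolution to an unspecified ``careful choice of Carleman phases and amplitudes.'' That is precisely where the proof must do its real work, and the paper resolves it by \emph{not} inverting any ray transform: it inserts two conjugate pairs of CGOs whose Gaussian beams concentrate near two distinct non-tangential geodesics meeting at a point $y_0$ (four solutions; the remaining $L-2$ slots are filled by arbitrary harmonic functions), so that the product localizes at the single point $y_0$ (Lemma \ref{lmm:4-Gaussian-beams}). The vanishing of the resulting pointwise expression \eqref{eq:higher-order-3} becomes a finite-dimensional linear-algebra problem in the tangent vectors $\xi^{(1)},\xi^{(2)},\eta_k$, solved by varying the geodesic directions over $V_{y_0}=\mathbb{S}^{n-2}$. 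Without either this localization-plus-linear-algebra scheme or an actual injectivity proof for the specific mixed transforms your substitution produces, the inductive step does not close.
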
\par

\begin{remark}
    We use Einstein's summation convention throughout this article.  Unless otherwise specified, indices in $[1,n] \cap \mathbb{Z}$ are denoted by Roman letters, while those in $[0,n] \cap \mathbb{Z}$ are denoted by Greek letters. 
\end{remark}


For isotropic media, the classical framework for linear conductivity equations, introduced in \cite{sylvester1987global}, converts Calderón's problem to an inverse potential problem for Schr\"{o}dinger equations. The latter is achieved by establishing the injectivity of a certain integral transform of the potential through \textit{complex geometric optics} (CGO) solutions of the Schr\"{o}dinger equations. See also \cite{novikov1988multidimensional} for inverse potential problems of Schr\"{o}dinger equations. For nonlinear equations, the approach entails higher-order linearization, which reduces inverse conductivity problems for nonlinear equations to inverse potential problems for linearized equations (see, for example, \cite{feizmohammadi2020inverse,lassas2020partial,lassas2021inverse}). Generally, when the nonlinearity depends also on the electric fields, the relevant integral transforms involve multiple products of the gradient of harmonic functions, as shown in \cite{carstea2021calderon,carstea2023density,liimatainen2024calderon}.

Anisotropic media are more involved. On CTA manifolds, \cite{dossantosferreira2009limiting,dossantosferreira2016calderon,dossantosferreira2020linearized} utilized Gaussian beam quasi-modes to construct the principal terms of CGO solutions and applied Carleman estimates to analyze the remainder terms. This approach determines the conformal factor of the conductivity. Existing work on nonlinear equations in anisotropic media \cite{sun1997inverse,feizmohammadi2020inverse,carstea2021inverse} required various additional assumptions on the conductivity. In general, the CTA property required in Theorem \ref{thm:main} does not satisfy the conditions therein.


Our strategy to tackle Theorem \ref{thm:main} consists of two components. First, we follow the framework of \cite{carstea2021calderon} to linearize the problem, and to convert the problem of unique inversion of coefficients to the injectivity of some integral transforms.

Then, to prove (\ref{deriv-equal}) for each $ \beta $, we use the following methods to treat the relevant integral transforms.
\begin{itemize}
    \item For $ |\beta|=0 $, it is directly reduced to the inverse problem for linearized equations on CTA manifolds, solved in \cite{dossantosferreira2009limiting,dossantosferreira2016calderon,dossantosferreira2020linearized}.
    \item For $ |\beta| = 1 $, we first recover $ \partial_{(s,p)} ^{\beta}a(s,x,p)|_{(s,p)=(t,0)}   $ at the boundary, and then extend it to the interior. The proof at the boundary, inspired by \cite{sun1997inverse}, applies singular solutions constructed in \cite{alessandrini1990singular} to the integral transforms. The dependence of $ a(s,x,p) $ on $ p $ produces the gradients of harmonic functions in the integral transforms. We utilize Stokes' theorem and the boundary determination results to convert the integral transform involving gradients of harmonic functions to that involving only harmonic functions.
    \item For $ |\beta|\geq 2 $, we use the CGO solutions constructed in \cite{dossantosferreira2016calderon,feizmohammadi2020inverse} with the Gaussian beams concentrated near some non-tangential geodesics (see Definition \ref{def:non-tan-geodesic}). Inspired by \cite{lassas2021inverse}, we plug two pairs of such CGO solutions into relevant integral transforms. This reduces the integral transforms to a linear system, the coefficients of which are the tangent vectors of the geodesics at a point. The choice of tangent vectors ensures the uniqueness and  existence of the solution to this system.
\end{itemize}

\section{Preliminaries}

\subsection{Forward problems}
First, we need to show that the DN map for (\ref{eq:quasi_gen}) is well-defined. In fact, one can prove that the forward problem is well-posed near constant solutions, by adapting the proof of \cite[][Theorem B.1]{kian2023partial}.
\begin{lemma}\label{lmm:forward}
    Consider the boundary value problem  (\ref{eq:quasi_gen}) with $ (\gamma^{ij} ) $ satisfying the elliptic conditions (\ref{condition:elliptic}). Then for any $ \lambda\in\mathbb{R} $, there exists a positive number $ \delta(\lambda)$ such that 
      for any boundary value $f\in B^{2,\alpha}_{\partial M} (\lambda,\delta(\lambda))$ the problem (\ref{eq:quasi_gen}) admits a unique solution $ u^f $ near the constant solution $ \lambda $.
\end{lemma}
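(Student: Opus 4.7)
The plan is to apply the Banach-space implicit function theorem to a map whose zero set parametrizes solutions of (\ref{eq:quasi_gen}) near the constant background. After the substitution $u = \lambda + v$ and $h = f - \lambda$, the problem becomes one of solving for small $v$ in terms of small $h$, and everything reduces to invertibility of the linearization at zero, which is a linear divergence-form elliptic Dirichlet problem amenable to standard Schauder theory.

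Concretely, I will define
$$F: C^{2,\alpha}(\overline{\Omega}) \times C^{2,\alpha}(\partial\Omega) \longrightarrow C^{\alpha}(\overline{\Omega}) \times C^{2,\alpha}(\partial\Omega), \quad F(v,h) := \Bigl(\partial_i\bigl(\gamma^{ij}(\lambda + v, x, \nabla v)\,\partial_j v\bigr),\ v|_{\partial\Omega} - h\Bigr),$$
so that $F(0,0)=(0,0)$. The $C^1$ regularity of $F$ in Fréchet sense follows from $\gamma \in C^{2,\alpha}$ jointly in $(s,x,p)$ together with the smoothness of substitution, multiplication, and the trace map on the relevant Hölder spaces.

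The next step is to compute $\partial_v F(0,0)$. Because $\nabla v$ vanishes at $v\equiv 0$, the chain-rule contributions from differentiating $\gamma^{ij}(\lambda+v, x, \nabla v)$ are multiplied by the vanishing factor $\partial_j v$ and drop out; only the derivative of $\partial_j v$ itself survives, giving
$$\partial_v F(0,0)(w) = \bigl(\partial_i(\gamma^{ij}(\lambda, x, 0)\,\partial_j w),\ w|_{\partial\Omega}\bigr).$$
The first component is a linear divergence-form operator, uniformly elliptic by (\ref{condition:elliptic}) and with $C^{2,\alpha}$ coefficients by (\ref{def:gamma}). Since it contains no zeroth-order term, the maximum principle excludes nontrivial kernels, and classical Schauder theory for the Dirichlet problem then yields that this pair is a bounded isomorphism $C^{2,\alpha}(\overline{\Omega}) \to C^{\alpha}(\overline{\Omega}) \times C^{2,\alpha}(\partial\Omega)$ with bounded inverse.

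With the isomorphism in place, the implicit function theorem produces a neighborhood $U$ of $0$ in $C^{2,\alpha}(\partial\Omega)$ and a $C^1$ map $h \mapsto v(h)$ with $v(0)=0$ and $F(v(h),h)=0$ for $h \in U$. Choosing $\delta(\lambda)>0$ so that $B^{2,\alpha}_{\partial\Omega}(0,\delta(\lambda)) \subset U$ and setting $u^f := \lambda + v(f-\lambda)$ delivers the desired unique solution near $\lambda$ for every $f \in B^{2,\alpha}_{\partial\Omega}(\lambda,\delta(\lambda))$. The only nonroutine point is the isomorphism check for the linearization, and even that is classical once the structure is unpacked; the rest is a mechanical invocation of the IFT.
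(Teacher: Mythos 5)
Your proposal is correct and follows essentially the same route as the paper: both set up a Banach-space implicit function theorem argument around the constant solution, identify the linearization as the divergence-form operator $\partial_i(\gamma^{ij}(\lambda,x,0)\partial_j\,\cdot\,)$ with Dirichlet boundary condition (the chain-rule terms indeed dropping out because $\nabla v$ vanishes at the background), and invoke Schauder theory to see this is an isomorphism. The only cosmetic difference is that you translate by $\lambda$ first, whereas the paper works with $F(f,u)$ directly at the point $(\lambda,\lambda)$; the substance is identical.
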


Then one can verify the gauge-invariance of the DN map under the gauge transform $ H_{\Phi}  $ in \eqref{def:gauge-transform}. 
\begin{lemma}\label{lmm:gauge-invariance} Let $\gamma$ be a conductivity in (\ref{eq:quasi_gen}).
  If $ \Phi\in\text{\rm Diff}(\overline{M}) $ is a $ C^{3,\alpha}$-diffeomorphism  satisfying $ \Phi_{\partial M} =\mathrm{id}_{\partial M}  $, then $ N_{H_{\Phi}\gamma }=N_{\gamma}.$
\end{lemma}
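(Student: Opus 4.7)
The plan is to use diffeomorphism invariance of the divergence equation. Given $f \in B^{2,\alpha}_{\partial M}(\lambda, \delta(\lambda))$, let $u := u^f_\gamma$ be the unique solution near $\lambda$ provided by Lemma \ref{lmm:forward} (shrinking $\delta(\lambda)$ if necessary so that the forward problem is well-posed for both $\gamma$ and $H_\Phi \gamma$). Set $v := u \circ \Phi^{-1}$; since $\Phi|_{\partial M} = \mathrm{id}$, we have $v|_{\partial M} = u|_{\partial M} = f$, and $v$ stays in a small $C^{2,\alpha}$-neighborhood of $\lambda$.

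The first step is to show that $v$ solves \eqref{eq:quasi_gen} with conductivity $H_\Phi \gamma$. I would do this via the weak formulation: starting from $\int \gamma^{ij}(u,y,\nabla u)\, \partial_j u\, \partial_i \varphi \, dy = 0$ for every $\varphi \in C_c^\infty(M)$, I would change variables $y = \Phi^{-1}(x)$ and test against $\varphi = \phi \circ \Phi$ for arbitrary $\phi \in C_c^\infty(M)$. The chain rule converts gradients of $u$ and $\varphi$ at $y = \Phi^{-1}(x)$ into $D\Phi$-transformed gradients of $v$ and $\phi$ at $x$, while $dy$ contributes the Jacobian factor $|\det D\Phi|^{-1}$. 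Rearranging exhibits the new coefficient as exactly the matrix conjugation defining $H_\Phi \gamma$ in \eqref{def:gauge-transform}, so $v$ satisfies the weak equation with $H_\Phi \gamma$. By uniqueness in Lemma \ref{lmm:forward}, $v = u^f_{H_\Phi \gamma}$.

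The second step is to compare the Neumann data. It is cleanest to introduce the current $(n-1)$-form
\[
\omega_\gamma^u := \gamma^{ij}(u,x,\nabla u)\, \partial_j u \cdot \bigl(\partial_i \llcorner dx^1 \wedge \cdots \wedge dx^n\bigr)
\]
on $\overline{M}$, so that $N_\gamma(f) = \iota^* \omega_\gamma^u$ by definition of the DN map, and the identity just proved can be repackaged as the covariance $(\Phi^{-1})^* \omega_\gamma^u = \omega_{H_\Phi \gamma}^v$. Since $\Phi|_{\partial M} = \mathrm{id}$, one has $\Phi^{-1} \circ \iota = \iota$, and therefore
\[
N_{H_\Phi \gamma}(f) = \iota^* \omega_{H_\Phi \gamma}^v = \iota^* (\Phi^{-1})^* \omega_\gamma^u = (\Phi^{-1} \circ \iota)^* \omega_\gamma^u = \iota^* \omega_\gamma^u = N_\gamma(f).
\]

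The main obstacle is the index bookkeeping in the covariance identity: the gauge transform in \eqref{def:gauge-transform} is written with $D\Phi^T$ bracketing $\gamma$ and with $D\Phi$ in the gradient slot, so the proof needs to keep careful track of which factor is $D\Phi$ versus $D\Phi^{-1}$ and of the transpose conventions. Once these are consistent, both steps are routine: the first is a change of variables in the weak formulation, and the second uses only the functoriality of pullback together with the fact that $\Phi$ fixes $\partial M$ pointwise.
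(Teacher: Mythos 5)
Your proposal is correct and follows essentially the same route as the paper: the paper likewise sets $\tilde u = u^f\circ\Phi^{-1}$, verifies via a change of variables in the weak formulation that $\tilde u$ solves the equation with conductivity $H_\Phi\gamma$, and then identifies the Neumann data by pulling back the current $(n-1)$-form and using $\Phi|_{\partial M}=\mathrm{id}_{\partial M}$. Your explicit remark about shrinking $\delta(\lambda)$ so that well-posedness holds for both conductivities is a small point of care the paper leaves implicit, but it does not change the argument.
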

\begin{proof}[Proof of Lemma \ref{lmm:gauge-invariance}]
    Let $ u^f\in B^{2,\alpha}_{\partial M} (\lambda,\delta(\lambda)) $ be some solution of (\ref{eq:quasi_gen}), and let $ \tilde{u}=u^f\circ(\Phi^{-1}) $ with $ \Phi\in \text{\rm Diff}(\overline{M}) $ a $ C^{3,\alpha} $ diffeomorphism satisfying $ \Phi_{\partial M} =id_{\partial M}  $. The function $ \tilde{u} $ will thus be the solution of the following equation:
    \begin{equation}
        \left\{\begin{aligned}
        \partial_i(\gamma^{ij} (\tilde{u}\circ\Phi,x,\nabla (\tilde{u}\circ\Phi))\partial_j (\tilde{u}\circ\Phi) )&= 0 \quad\text{in}~M,\\
        \tilde{u}      &= f \quad\text{on}~ \partial M.
        \end{aligned}\right.
    \end{equation}
    This means that for any $ \psi=\varphi\circ\Phi\in C_c^{\infty}(M) $, it holds that
    \begin{equation*}
        \int_M \gamma^{ij} (\tilde{u}\circ\Phi(x),x,\nabla (\tilde{u}\circ\Phi)(x))\partial_j (\tilde{u}\circ\Phi)(x)\partial_i(\varphi\circ\Phi)(x)dx=0.
    \end{equation*}\par
    Let $ x=\Phi^{-1}(y) $, the integral above becomes
    \begin{equation*}
        \int_M \frac{(D\Phi_{\Phi^{-1}(y)}\nabla \varphi)_i(y)\gamma^{ij} (\tilde{u}(y),\Phi^{-1}(y),(D\Phi_{\Phi^{-1}(y)}\nabla \tilde{u})(y)) (D\Phi_{\Phi^{-1}(y)}\nabla \tilde{u})_j(y)} {|\det D\Phi|_{\Phi^{-1}(y)}}dy =0,
    \end{equation*}
    which can be exactly rewritten as the following:
    \begin{equation*}
        \int_M \partial_k\varphi(y)(H_\phi\gamma)^{kl}(\tilde{u}(y),y,\nabla\tilde{u}(y))\partial_l \tilde{u}(y)dy=0,\quad\forall \varphi\in C_c^{\infty}(M).
    \end{equation*}\par
    Therefore, we know that $ \tilde{u}=S^{H_\Phi \gamma}(f) $, and that
    \begin{multline*}
        N_{H_\Phi\gamma}(f)(y)= (H_\phi\gamma)^{kl}(\tilde{u}(y),y,\nabla\tilde{u}(y))\partial_{y^k}  \tilde{u}(y)\iota^*(\partial_{y^l}  \llcorner dy^1\wedge dy^2\wedge\cdots\wedge dy^n) \\
        \xlongequal[u^f=\tilde{u}\circ\Phi]{\text{\rm Let }y=\Phi(x)}\gamma^{ij}(u(x),x,\nabla u(x))\partial_{x^i}u(x)\iota^*(\partial_{x^j}  \llcorner dx^1\wedge dx^2\wedge\cdots\wedge dx^n)=N_{\gamma}(f)(x)\\
        \xlongequal{\Phi_{\partial M}=id_{\partial M}} N_{\gamma}(f)(y),\quad\forall y\in\partial M,
    \end{multline*}
which completes the proof.
\end{proof}
In fact, we may view the conductivity $ (\gamma^{ij}(s,\cdot,p)) $ at fixed $ (s,p)\in\mathbb{R}\times\mathbb{R}^n $ as a Riemannian metric $ (g_{ij}(\cdot)) $. The gauge transform of the conductivity is indeed the pullback of the metric $ (g_{ij} ) $. This is clear as we have the following relationship between the equations:
\begin{equation*}
    \partial_i(\gamma^{ij}\partial_j u)=(\det g)\Delta_g(u),~\text{namely}~\partial_i(\gamma^{ij}\partial_j u)=0\Leftrightarrow \Delta_g(u)=0,
\end{equation*}
and that between the corresponding DN maps: 
\begin{equation*}
    N_g u:=\iota^*(\text{\rm grad}^{g} u \llcorner d\text{\rm Vol}_g)=\iota^*(\partial_{x^i}u\gamma^{ij}\partial_{x^j}  \llcorner dx^1\wedge dx^2\wedge\cdots\wedge dx^n)=N_\gamma u.
\end{equation*}

\subsection{The Calder\'on problem for linear conductivities}

We list some results on inversion of linear conductivities, 
which are useful for nonlinear conductivity problems.

First, \cite{dossantosferreira2009limiting, dossantosferreira2016calderon} showed that the conformal factor of a linear CTA conductivity is uniquely determined by the DN map.
\begin{proposition}\label{Prop-0th}
    Let $ (M,\partial M, g) $ and $ A $ be as in Theorem \ref{thm:main} and (\ref{def:corresponding_metric}).   Consider the linear conductivities $ \gamma_m:=c_m A $ with positive factors $ c_m\in C^{\infty} (M) $ for $m=1, 2$. If $ N_{\gamma_1}=N_{\gamma_2} $ holds, then we have $c_1(x)=c_2(x)$ for any $x \in M$.
\end{proposition}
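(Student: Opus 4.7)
The plan is to reduce Proposition \ref{Prop-0th} to the known Calder\'on uniqueness theorem for the Laplace--Beltrami DN map on a simple CTA manifold within a fixed conformal class, which is precisely what \cite{dossantosferreira2009limiting, dossantosferreira2016calderon} establishes. The reduction has three short steps.

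First I would pass from conductivities to metrics. For each $m=1,2$, associate to $\gamma_m = c_m A$ the Riemannian metric
\begin{equation*}
    g_m := (\det \gamma_m)^{\frac{1}{n-2}}\,\gamma_m^{-1},
\end{equation*}
which is the inverse of the correspondence recalled in the introduction. Using $A = \sqrt{\det g}\cdot g^{-1}$, so that $\det A = (\det g)^{(n-2)/2}$ and $A^{-1} = g/\sqrt{\det g}$, a direct computation gives
\begin{equation*}
    g_m = c_m^{\frac{2}{n-2}}\, g.
\end{equation*}
Thus $g_1$ and $g_2$ lie in the conformal class of $g$. Under the normalization $c\equiv 1$ adopted above Theorem \ref{thm:main}, we have $g = e \oplus g_0$, so the $g_m$ are CTA metrics in the conformal class $[e \oplus g_0]$ sharing the same transverse factor $(M_0, g_0)$, which is simple by Definition \ref{def:cta-metric}.

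Next I would translate the equality of DN maps. As already observed at the end of Subsection 2.1, the conductivity DN map $N_{\gamma_m}$ coincides with the Laplace--Beltrami DN map $N_{g_m}$ associated to $\Delta_{g_m}$, via the identity $\partial_i(\gamma_m^{ij}\partial_j u) = (\det g_m)\,\Delta_{g_m} u$ and the matching of the normal flux as an $(n-1)$-form on $\partial M$. Hence $N_{\gamma_1}=N_{\gamma_2}$ is equivalent to $N_{g_1}=N_{g_2}$, two conformally related CTA metrics producing the same Laplace--Beltrami DN map.

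Finally I would invoke the main theorem of \cite{dossantosferreira2016calderon} (building on \cite{dossantosferreira2009limiting}): on a simple CTA manifold, the Laplace--Beltrami DN map uniquely determines the conformal factor within a fixed conformal class. Applied to $g_1$ and $g_2$ this yields $c_1^{2/(n-2)} = c_2^{2/(n-2)}$ pointwise on $M$, and since $c_m>0$ and $n\geq 3$, we conclude $c_1 \equiv c_2$.

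The work here is really bookkeeping: the only substantive ingredient is the cited CTA uniqueness theorem, and the main place to be careful is to verify that the conductivity-to-metric correspondence preserves both the conformal class of $g$ and the simplicity hypothesis on the transverse factor, so that the external theorem applies off the shelf with no additional analytic input.
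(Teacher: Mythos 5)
Your proposal is correct and matches the paper's treatment: the paper gives no independent proof of Proposition \ref{Prop-0th}, presenting it as a direct consequence of \cite{dossantosferreira2009limiting,dossantosferreira2016calderon} (with the density of products of harmonic functions, Lemma \ref{lmm:density-cta}, as the underlying ingredient), and your conductivity-to-metric reduction $g_m = c_m^{2/(n-2)}g$ together with the identification $N_{\gamma_m}=N_{g_m}$ is exactly the bookkeeping the paper sketches at the end of Subsection 2.1. Nothing further is needed.
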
\par
The core of recovering conformal factors in Proposition \ref{Prop-0th} is to show the products of harmonic functions are dense in $ \mathcal{D}'(M) $, which was proved in \cite[][Subsection 6.1]{dossantosferreira2009limiting}:
\begin{lemma}\label{lmm:density-cta}
    Let $ (M,g) $ be as in Theorem \ref{thm:main} and $ f\in L^{\infty}(M)  $. Denote by $\mathcal{H}_g(M)$ the collection of harmonic functions on $(M, g)$.  If there holds   \begin{displaymath}
    	\int_M fu_1u_2=0,\quad \forall (u_1, u_2) \in \mathcal{H}_{c_1g}(M) \times \mathcal{H}_{c_2g}(M),
    \end{displaymath}
    then $ f\equiv 0 $.
\end{lemma}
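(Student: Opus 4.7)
The plan is to construct complex geometric optics (CGO) solutions to the harmonic equations for $c_1 g$ and $c_2 g$, use them as the test pair $(u_1, u_2)$ in the integral identity, and reduce the identity to a composition of the Euclidean Fourier transform and the geodesic ray transform on $(M_0, g_0)$, both of which are injective under our hypotheses.

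Since $(M,g) \subset (\mathbb{R} \times M_0, c \cdot (e \oplus g_0))$, conformal covariance of the conformal Laplacian allows us to replace $u_j = (c\, c_j)^{-(n-2)/4} v_j$, so that $\Delta_{c_j g} u_j = 0$ becomes a Schrödinger equation $(-\Delta_{\tilde g} + q_j) v_j = 0$ on the transversally anisotropic background $\tilde g = e \oplus g_0$ for an appropriate potential $q_j \in C^\infty(M)$. Absorbing the resulting positive conformal factor into $f$ yields an equivalent density statement
\begin{equation*}
    \int_M \tilde f\, v_1 v_2\, d\mathrm{Vol}_{\tilde g} = 0 \quad \text{for all Schr\"odinger solutions } v_j,
\end{equation*}
with $\tilde f \in L^\infty(M)$ vanishing if and only if $f$ does. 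The point of the reduction is that $\varphi = x_1$ is a limiting Carleman weight on $(\mathbb{R} \times M_0, \tilde g)$, a feature not available for a generic anisotropic metric.

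Next, for each non-tangential unit-speed geodesic $\gamma : [0,L] \to M_0$ and each $\lambda \in \mathbb{R}$, I would construct CGO pairs
\begin{equation*}
    v_1 = e^{-\tau(x_1 + i\psi) - i\lambda x_1}\bigl(a^{(\tau)} + r_1^{(\tau)}\bigr), \qquad
    v_2 = e^{\tau(x_1 + i\psi)}\bigl(b^{(\tau)} + r_2^{(\tau)}\bigr),
\end{equation*}
where $\psi$ on $M_0$ is chosen so that $x_1 + i\psi$ remains a limiting Carleman weight, and $a^{(\tau)}, b^{(\tau)}$ are Gaussian-beam quasi-modes on $(M_0, g_0)$ concentrating on $\gamma$ as $\tau \to \infty$. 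The remainders $r_j^{(\tau)} = O(\tau^{-1/2})$ are produced by solving the conjugated Schrödinger equation via the Carleman estimate for $\varphi$. Inserting this pair into the integral identity, the leading exponentials cancel up to the residual phase $e^{-i\lambda x_1}$, and concentration of the Gaussian beams forces in the limit
\begin{equation*}
    \int_\mathbb{R} e^{-i\lambda x_1} \int_0^L \tilde f\bigl(x_1, \gamma(t)\bigr)\, dt\, dx_1 = 0
\end{equation*}
for every $\lambda$ and every admissible $\gamma$. Injectivity of the Fourier transform in $x_1$ makes the inner integral vanish for a.e.\ $x_1$, and simplicity of $(M_0, g_0)$ renders the geodesic ray transform there injective, so $\tilde f \equiv 0$ and hence $f \equiv 0$.

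The main technical obstacle is the Gaussian-beam CGO construction on $(M_0, g_0)$: one must solve the transport equations along $\gamma$ to produce valid quasi-modes with Gaussian transverse profile, then close the ansatz by solving the conjugated Schrödinger equation with remainder bounds uniform in $\lambda$ and in the chosen geodesic, and finally justify the limit $\tau \to \infty$ of the concentration estimates. This is exactly the analytic core of the arguments in \cite{dossantosferreira2009limiting, dossantosferreira2016calderon}, which I would invoke and adapt.
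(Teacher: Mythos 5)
The paper does not actually prove this lemma itself; it quotes it from \cite[Subsection 6.1]{dossantosferreira2009limiting}, and your proposal is in essence a reconstruction of that argument: conformal reduction of the two conductivity equations to Schr\"odinger equations on the product metric $e\oplus g_0$, CGO solutions built from the limiting Carleman weight $x_1$ together with quasimodes concentrating on a non-tangential geodesic of the simple transversal manifold, and a reduction of the orthogonality identity to a one-dimensional Fourier transform combined with a geodesic ray transform on $(M_0,g_0)$. The overall strategy is therefore the correct one.

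There is, however, a concrete gap in how you close the argument: the parameter $\lambda$ cannot be decoupled from the transversal phase in the way your ansatz suggests. If you insist on the phase $-\tau(x_1+i\psi)-i\lambda x_1$ for $v_1$, the eikonal equation for the full phase leaves a cross term of size $\tau\lambda$ (since $(\tau+i\lambda)^2-\tau^2|\nabla\psi|^2=2i\tau\lambda-\lambda^2$ when $|\nabla\psi|=1$), which must be pushed into the transport equation and forces the amplitude to carry a factor $e^{\lambda\psi}$ along the geodesic; equivalently, if one builds the quasimode at the complex frequency $s=\tau+i\lambda$ as in Proposition \ref{Prop-GBQ}, the residual phase in the product $v_1v_2$ is $-i\lambda(x_1+i\psi)=-i\lambda x_1+\lambda\psi$. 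Either way, the $\tau\to\infty$ limit of the integral identity is not $\int_{\mathbb{R}}e^{-i\lambda x_1}\int_0^L\tilde f(x_1,\gamma(t))\,dt\,dx_1$ but an \emph{attenuated} geodesic ray transform of the partial Fourier transform $\hat{\tilde f}(\lambda,\cdot)$ along $\gamma$, with constant attenuation proportional to $\lambda$. Your appeal to injectivity of the plain ray transform for every $\lambda$ therefore does not apply. The standard repair, and the one used in the cited reference, is: (i) invoke injectivity of the attenuated geodesic ray transform on simple manifolds for sufficiently small constant attenuation, which yields $\hat{\tilde f}(\lambda,\cdot)=0$ for $|\lambda|$ small; and (ii) observe that, since $M$ is compact, $\tilde f$ has compact support in $x_1$, so $\lambda\mapsto\hat{\tilde f}(\lambda,x')$ is entire by Paley--Wiener and its vanishing near $\lambda=0$ propagates to all $\lambda$, whence $\tilde f\equiv 0$. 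With these two ingredients added, your argument is complete.
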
\par

Retrieving the second derivative requires special solutions with prescribed singularity only at a single point, which was constructed in \cite[Theorem 1.1]{alessandrini1990singular}.  
\begin{proposition}[]\label{Prop-Green}
    Let $ \Omega\subset\mathbb{R}^n $ be a bounded domain, $ x_0\in \Omega $ and $ (A^{ij}) \in C^{2,\alpha}(\Omega,\mathrm{Mat}_{n}(\mathbb{R}) ) $. Suppose that $ x_0=0$ and $A^{ij}(x_0)=\delta^{ij} $. There exists a solution of $ \partial_i(A^{ij} \partial_j w)=0 $ with prescribed singularity at the point $ x_0=0 $. Specifically, such a solution takes the form
    \begin{equation}\label{singular_solution}
        w(x)=H_n(x)+\omega_{n} (x),
    \end{equation}
    where $ H_n =|x|^{2-n}$ is the Green function,
    and $ \omega_{n}\in C^{3,\alpha}(\overline{\Omega}) $ satisfies the order estimate
    \begin{equation}\label{singular_solution-estimate}
        |\omega_{n}(x)|+|x||\nabla\omega_{n}(x)|\leq C_{\beta,\overline{\Omega},(A^{ij} )}   |x|^{2-n+\beta},
    \end{equation}
    with $ \beta\in(0,1) $ and $C_{\beta,\overline{\Omega},(A^{ij} )} <+\infty $.
\end{proposition}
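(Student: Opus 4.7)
The plan is a parametrix construction: take the Euclidean fundamental solution $H_n(x) = |x|^{2-n}$ as the leading singular term, determine the regular correction $\omega_n$ by solving a linear inhomogeneous equation, and then upgrade its regularity to the sharp weighted pointwise bound via scaling. Setting $w = H_n + \omega_n$ and using $\Delta H_n \equiv 0$ on $\Omega \setminus \{0\}$, the equation $\partial_i(A^{ij}\partial_j w) = 0$ rewrites as $L\omega_n = f$, where $L := \partial_i(A^{ij}\partial_j\,\cdot\,)$ and $f := -\partial_i\bigl((A^{ij} - \delta^{ij})\partial_j H_n\bigr)$. Because $A^{ij}(0) = \delta^{ij}$ and $A^{ij} \in C^{2,\alpha}$, Taylor's theorem gives $|A^{ij}(x) - \delta^{ij}| \leq C|x|$; combined with $|\nabla H_n| \sim |x|^{1-n}$ and $|\nabla^2 H_n| \sim |x|^{-n}$, the product rule yields the bound $|f(x)| \leq C|x|^{1-n}$, and hence $f \in L^p(\Omega)$ for every $p \in (1, \tfrac{n}{n-1})$.

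The next step is to solve the Dirichlet problem $L\omega_n = f$ with, for instance, $\omega_n|_{\partial\Omega} = -H_n|_{\partial\Omega}$ (after a smooth extension of $A^{ij}$ if convenient). Standard $L^p$ theory for divergence-form elliptic operators with $C^{0,\alpha}$ coefficients produces a solution $\omega_n \in W^{2,p}$; this alone only yields H\"older continuity of $\omega_n$, which is far from the required weighted bound. To upgrade, I would run a scaling argument: for small $r > 0$ set $\omega_n^{(r)}(y) := r^{n-2-\beta}\omega_n(ry)$ on the annulus $\{\tfrac12 \leq |y| \leq 2\}$. A direct computation shows $\omega_n^{(r)}$ satisfies a uniformly elliptic equation with coefficients $A^{ij}(ry) \to \delta^{ij}$ as $r \to 0$, whose right-hand side is dominated on the annulus by $Cr^{1-\beta}$. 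Provided one has an a priori sup-norm bound on $\omega_n^{(r)}$ --- obtained by comparing $\omega_n$ with the barrier $x \mapsto C|x|^{2-n+\beta}$, which is an approximate $L$-supersolution near $0$ once $\beta \in (0,1)$ is chosen small enough --- interior Schauder estimates on the fixed annulus furnish a uniform $C^1$ bound for $\omega_n^{(r)}$, and undoing the rescaling delivers (\ref{singular_solution-estimate}).

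The main obstacle I anticipate is the barrier/comparison argument underpinning the a priori sup-norm bound in the scaling step: the existence theory yields only integral-averaged regularity, so converting this to the quantitative homogeneous blow-up rate requires genuinely pointwise comparison. Constructing a barrier of the form $C|x|^{2-n+\beta}$ relies crucially on the normalization $A^{ij}(0) = \delta^{ij}$ together with continuity of $A^{ij}$, which guarantee that $L$ acts on radial test functions sufficiently like $\Delta$ on small balls around the singular point; the admissible range of $\beta$ is dictated by how close $L$ is to the Laplacian, and one can arrange any $\beta \in (0,1)$.
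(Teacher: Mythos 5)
First, note that the paper itself offers no proof of this proposition: it is quoted from Alessandrini's 1990 paper (Theorem 1.1 there), where the singular solution is built from the fundamental solution of the variable-coefficient operator via Levi's parametrix method, and the weighted pointwise bound \eqref{singular_solution-estimate} comes out of explicit kernel estimates for the resulting integral equation. Your perturbative ansatz $w=H_n+\omega_n$ with $L\omega_n=f:=-\partial_i\bigl((A^{ij}-\delta^{ij})\partial_jH_n\bigr)$ is a sensible alternative starting point, and your bound $|f(x)|\leq C|x|^{1-n}$ is correct.

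The gap is in the step you yourself flag as the main obstacle, and it is not a deferrable technicality --- it is the entire content of the proposition. To run a comparison argument on the punctured ball $B_\rho\setminus\{0\}$ against the barrier $M|x|^{2-n+\beta}$, you must already know that $\omega_n$ carries no singularity of order $|x|^{2-n}$ at the origin; otherwise the barrier, being $o(|x|^{2-n})$, cannot dominate $\omega_n$ near $0$ and the maximum principle yields nothing. The existence theory you invoke does not supply this: since $f\in L^p$ only for $p<\tfrac{n}{n-1}$, you get at best $\omega_n\in W^{2,p}$ with $2p<n$, hence $\nabla\omega_n\in L^q$ only for $q<\tfrac{n}{n-2}$ --- and $|x|^{2-n}$ itself belongs to exactly these spaces, so nothing in the Sobolev data excludes a full $|x|^{2-n}$ singularity in $\omega_n$. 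De Giorgi--Nash local boundedness is likewise unavailable at the origin, because the divergence-form datum $F^i=(A^{ij}-\delta^{ij})\partial_jH_n\sim|x|^{2-n}$ lies in $L^q$ only for $q<\tfrac{n}{n-2}<n$. As written, the argument is therefore circular at its crucial point. It can be repaired: a dyadic decomposition into annuli $\{2^{-k-1}\leq|x|\leq2^{-k}\}$, with interior estimates applied to the rescaled equation on each annulus (where the data are bounded), converts the global $L^{q^*}$ bound on $\omega_n$ into a crude pointwise rate $|\omega_n(x)|\lesssim|x|^{3-n}=o(|x|^{2-n})$; only after that do your barrier comparison and the Schauder rescaling for the gradient bound in \eqref{singular_solution-estimate} go through. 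Alternatively, one follows Alessandrini and estimates the volume potential of $f$ against the parametrix directly, which produces the $|x|^{2-n+\beta}$ gain in one step.
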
\par
The reconstruction of higher-order derivatives involves non-tangential geodesics on $ (M_0,g_0) $ introduced in \cite[Definition 1.2]{dossantosferreira2020linearized}.
\begin{definition}[Non-tangential geodesics]\label{def:non-tan-geodesic}
    A geodesic $ \gamma:[-T,T]\to M_0 $ is called non-tangential, if there holds that \begin{itemize} \item the interior of $\gamma$ lies in the interior of $M_0$, i.e., $ \{\gamma(x) : x\in(-T,T)\} \subset M_0^{int} $; \item   the endpoints of $\gamma$ are at the boundary of $M_0$, i.e., $ \gamma(\pm T)\in \partial M $; \item $\gamma$ is not tangent to the boundary of $M_0$, i.e., $ \dot{\gamma}(\pm T)\notin T_{\gamma(\pm T)}(\partial M_0) $. \end{itemize}
\end{definition}\par

In particular, we need to estimate some products of the gradients of harmonic functions, where CGO solutions are effective. In this circumstance, a key component of the suitable CGO solutions is the following Gaussian beam quasi-modes from \cite[Section 4.1]{feizmohammadi2020inverse} and \cite[Proposition 5.2]{lassas2021inverse}.
\begin{proposition}[Gaussian beam quasi-modes]\label{Prop-GBQ}
    Suppose $ (M_0,g_0) $ is a compact Riemannian manifold with smooth boundary $ \partial M_0 $ of dimension $n-1$. Let $ \gamma:[-T,T]\to M_0 $ be a non-tangential geodesic. For any $ k,K\in\mathbb{N} $ and $ \delta>0 $, there is a family of functions $$ \left\{v_s \in C^{\infty} (M_0) : s=\tau+i\lambda, \tau\geq 1 ,\lambda\in\mathbb{C},|\lambda|<1 \right\} $$ satisfying the estimates
    \begin{equation*} \left\{
        \begin{aligned}
            \left\|(-\Delta_{g_0}-(\tau+i\lambda)^2)v_{\tau+i\lambda}\right\|_{H^k(M_0)} &=O(\tau^{-K} ) \\
            \left\|v_{\tau+i\lambda} \right\|_{L^4( M_0)} &=O(\tau^{-\frac{n-2}{8}})\\
            \left\|v_{\tau+i\lambda} \right\|_{L^4(\partial M_0)} &=O(\tau^{-\frac{n-2}{8}})
        \end{aligned}\right.  \quad \mbox{ as $ \tau\to\infty $.}
    \end{equation*}
    Specifically, each $ v_s $ reads
    \begin{equation}
        v_s(x')=e^{i(\tau+i\lambda)\psi(x')}b_{ \tau+i\lambda}(x'),
    \end{equation}
    where  
    \begin{itemize}
    	\item    the phase function $ \psi(x') $ satisfies
    	\begin{equation*}
    		\psi(\gamma(t))=t,\quad \nabla\psi(\gamma(t))=\dot{\gamma} (t),\quad \text{\rm Im}((\psi\circ\gamma)''(t))\geq 0,\quad \text{\rm Im}(\nabla^2\psi(\gamma(t)))|_{\dot{\gamma}(t)^{\bot} } >0,
    	\end{equation*}
    	for $ t $ close to $ t_0 $;
    	
    	\item  the amplitude $ b_{ \tau+i\lambda}(x') $  is supported within a $ \delta $-tubular neighborhood of the geodesic $ \gamma $, and admits an expansion along $\gamma$, $$ b_{ \tau+i\lambda}(\gamma(t))=\tau^{\frac{n-2}{4}}(b_{(0)}(\gamma(t))+O(\tau^{-1})), $$ with $ b_{(0)}(\gamma(t)) $  non-vanishing. 
    \end{itemize}

\end{proposition}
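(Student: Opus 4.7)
The plan is to build $v_s = e^{is\psi}b_s$ as a WKB ansatz concentrated within a $\delta$-tubular neighborhood of $\gamma$, where $\psi$ is a complex phase solving the eikonal equation to high order along $\gamma$ with positive imaginary Hessian in the transverse directions, and $b_s$ is a finite expansion in powers of $s^{-1}$ whose coefficients solve transport equations along $\gamma$. I would first extend $(M_0,g_0)$ slightly past $\gamma(\pm T)$, which is possible because $\gamma$ is non-tangential, and introduce Fermi coordinates $(t,y)\in[-T-\epsilon,T+\epsilon]\times\mathbb{R}^{n-2}$ centered on the extended geodesic, so that $\gamma(t)=(t,0)$ and $g_0 = dt^2 + g_{\alpha\beta}(t,y)\,dy^\alpha dy^\beta$ with $g_{\alpha\beta}(t,0)=\delta_{\alpha\beta}$.

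For the phase I would posit the Taylor ansatz
\[
\psi(t,y) = t + \tfrac{1}{2}\langle H(t)y, y\rangle + \sum_{3\le|\alpha|\le N}\psi_\alpha(t)\frac{y^\alpha}{\alpha!},
\]
substitute it into $g_0^{ij}\partial_i\psi\partial_j\psi=1$, and match the Taylor coefficients of each $y^\alpha$. The zeroth-order match reproduces $|\dot\gamma|_{g_0}=1$, the second-order match becomes a matrix Riccati ODE for $H(t)$, and higher orders give linear inhomogeneous ODEs for the $\psi_\alpha$. A standard symplectic argument identifies solutions of the Riccati equation with Lagrangian subspaces transported by the Jacobi flow, so choosing strictly positive initial data $\mathrm{Im}\,H(-T-\epsilon)>0$ and working in the complex category propagates positivity throughout $[-T-\epsilon,T+\epsilon]$, yielding a globally defined complex symmetric $H(t)$ with $\mathrm{Im}\,H(t)>0$. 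The resulting phase satisfies the conditions $\psi(\gamma(t))=t$, $\nabla\psi(\gamma(t))=\dot\gamma(t)$, and the Hessian positivity stated in the proposition automatically, and produces the Gaussian decay $|e^{is\psi}|\lesssim e^{-c\tau|y|^2}$ that confines the ansatz to the tube.

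For the amplitude I would take $b_s(t,y) = \chi(y/\delta)\sum_{j=0}^{J}s^{-j}b_{(j)}(t,y)$, with a smooth cutoff $\chi$ supported in a $\delta$-neighborhood of $\gamma$ and each $b_{(j)}$ a Taylor polynomial in $y$ of sufficient order. Conjugating $-\Delta_{g_0}-s^2$ by $e^{is\psi}$ and collecting powers of $s$, the principal symbol gives the eikonal (already solved), and the subprincipal symbol yields a linear first-order transport ODE along $\gamma$ of the schematic form $2\langle\nabla\psi,\nabla b_{(0)}\rangle + (\Delta_{g_0}\psi)b_{(0)} = 0$, which on $\gamma$ reduces to $2\partial_t b_{(0)} + (\Delta_{g_0}\psi|_\gamma)b_{(0)}=0$; solving this linear ODE with nonzero initial value gives the non-vanishing $b_{(0)}(\gamma(t))$ required by the statement. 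The higher coefficients $b_{(j)}$ solve inhomogeneous versions of the same transport equation driven by $b_{(0)},\ldots,b_{(j-1)}$ and the higher Taylor coefficients of $\psi$, each again reducible to a linear ODE in $t$ with polynomial dependence in $y$; the overall normalization prefactor on $b_{(0)}$ is the factor $\tau^{(n-2)/4}$ fixed to produce the stated Gaussian-beam scaling.

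For the estimates, by construction every Taylor coefficient in $y$ of $(-\Delta_{g_0}-s^2)v_s$ vanishes on $\gamma$ up to order $N$, so the pointwise remainder inside the tube is bounded by $C|y|^{N+1}\tau^M e^{-c\tau|y|^2}$ for a fixed $M$ depending only on $k$, while the contribution from derivatives of $\chi$ is supported in $\{|y|\gtrsim\delta\}$ and is therefore exponentially small in $\tau$. Choosing $N$ sufficiently large in terms of $k$ and $K$ and integrating in the $(n-2)$ transverse directions using $\int_{\mathbb{R}^{n-2}}e^{-c\tau|y|^2}\,dy \asymp \tau^{-(n-2)/2}$ yields the $H^k$ bound $O(\tau^{-K})$; the $L^4$ bounds on $M_0$ follow from the same Gaussian scaling, and the boundary $L^4$ bound on $\partial M_0$ follows by restricting to the transversal slice of $\partial M_0$ at $\gamma(\pm T)$, where the non-tangential condition guarantees a clean codimension-one intersection. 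The main obstacle is the global solvability of the Riccati equation for $H(t)$ with $\mathrm{Im}\,H(t)>0$ throughout $[-T,T]$, which is precisely why the phase is allowed to be complex: the symplectic invariance of the Jacobi flow propagates strict positivity of $\mathrm{Im}\,H(t)$ for all time regardless of the possible presence of conjugate points in the real category, and once this is in hand the rest of the construction is an essentially algebraic cascade of ODE problems.
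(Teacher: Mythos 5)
You should first know that the paper contains no proof of this proposition: it is imported as a quoted result, with the construction attributed to \cite[Section 4.1]{feizmohammadi2020inverse} and \cite[Proposition 5.2]{lassas2021inverse}. Your proposal therefore cannot be compared to an internal argument; what it does is reconstruct, essentially correctly in outline, the standard Gaussian beam construction used in those references — Fermi coordinates along the geodesic, the eikonal hierarchy with a complex matrix Riccati equation for the transverse Hessian, the symplectic/Jacobi-flow argument giving global solvability with $\mathrm{Im}\,H(t)>0$, transport ODEs producing a non-vanishing $b_{(0)}$, and remainder estimates with $N$ taken large relative to $k$ and $K$. Those steps are sound and are the same ones the cited papers carry out.

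There is, however, a concrete step in your write-up that fails: the $L^4$ estimates do not ``follow from the same Gaussian scaling'' once you have fixed the prefactor $\tau^{(n-2)/4}$ on the amplitude. With $|v_s|\asymp \tau^{(n-2)/4}e^{-c\tau|y|^2}$ near the geodesic, one computes
\begin{equation*}
    \left\|v_s\right\|_{L^4(M_0)}^4 \asymp \tau^{\,n-2}\int_{\mathbb{R}^{n-2}} e^{-4c\tau|y|^2}\,dy \asymp \tau^{\,n-2}\cdot\tau^{-\frac{n-2}{2}}=\tau^{\frac{n-2}{2}},
\end{equation*}
so $\|v_s\|_{L^4(M_0)}\asymp \tau^{+\frac{n-2}{8}}$, which grows, whereas the proposition asserts decay $O(\tau^{-\frac{n-2}{8}})$. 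The decaying $L^4$ bounds (interior and boundary) hold precisely for the normalization in which $b_s$ is $O(1)$ along the geodesic — the convention of \cite{lassas2021inverse} — while the $\tau^{(n-2)/4}$ prefactor is the $L^2$-normalization convention; the statement you were asked to prove mixes the two, and is internally inconsistent as written. A correct proof has to confront this: either construct the $O(1)$-amplitude beams and obtain the stated $L^4$ decay (then the expansion of $b_s$ along $\gamma$ carries no $\tau^{(n-2)/4}$ factor), or keep your normalization and concede that the $L^4$ norms grow. Asserting both, as your proposal does, shows the $L^4$ step was never actually computed.

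A second, more minor gap: your construction presumes a global Fermi chart, i.e.\ an embedded tubular neighborhood of $\gamma$. The proposition assumes only that $(M_0,g_0)$ is compact with boundary and $\gamma$ is non-tangential, and such geodesics may self-intersect, in which case no global Fermi coordinates exist. The cited references handle this by splitting $[-T,T]$ into finitely many subintervals on which $\gamma$ is injective, constructing a beam on each, and taking $v_s$ to be a finite glued sum, with the phase conditions holding locally along each segment. In this paper's application $M_0$ is simple, so the issue is vacuous there, but as a proof of the proposition as stated the reduction is missing.
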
\par

The desired CGO solutions, constructed in \cite[(44)]{feizmohammadi2020inverse} and \cite[Proposition 5.3]{lassas2021inverse}, are the following.

\begin{proposition}[CGO solutions]\label{Prop-CGO}
    Let $(M,g) $ be as in Theorem \ref{thm:main} and $ \gamma:[-T,T]\to M_0 $ a non-tangential geodesic. Then for any $ \tau\geq 1,~\delta > 0,~\lambda\in\mathbb{C},|\lambda|<1 $ and $ k,K\in\mathbb{N} $, the equation $ \Delta_{cg} u=0 $ admits a CGO solution of the form:
    \begin{equation*}
        u_{\tau+i\lambda} = e^{-(\tau+i\lambda)x^1} (v_{ \tau+i\lambda}(x')+r_{\tau+i\lambda})
    \end{equation*} 
where $ v_{ \tau+i\lambda}(x')=e^{i(\tau+i\lambda)\psi(x')}b_{ \tau+i\lambda}(x')  $ is the Gaussian beam quasi-mode given as in Proposition \ref{Prop-GBQ}.
More precisely, for any $K \in \mathbb{Z}_+$ one can find
\begin{equation}\label{eq:gbq-wkb}
    b_{\tau+i\lambda}=\tau^{\frac{n-2}{4}}\sum_{k=0}^{N}(\tau+i\lambda)^{-k}b_{(-k)}
\end{equation}
such that the remainder term $ r_{\tau+i\lambda} $ obeys 
\begin{equation}\label{eqn : error estimates for CGO}
    \begin{aligned}
            \|r_{\tau+i\lambda} \|_{H^k(M)} &=O(\tau^{-K}),\quad \mbox{    as $ |\tau|\to\infty $   }.
    \end{aligned}
\end{equation}

\end{proposition}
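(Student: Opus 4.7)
My plan is to carry out the standard CTA construction of \cite{feizmohammadi2020inverse, lassas2021inverse} (which itself builds on Dos Santos Ferreira--Kenig--Salo--Uhlmann): a Gaussian beam WKB expansion for the principal quasi-mode, combined with a Carleman-type solvability estimate for the remainder. Since the introduction reduces to $c\equiv 1$ without loss of generality (by absorbing the conformal factor into $a$ and $A$), it suffices to treat $\Delta_g u = 0$ on $(M,g)\subset(\mathbb{R}\times M_0, e\oplus g_0)$. Writing $u_s = e^{-sx^1}(v_s(x') + r_s)$ with $s = \tau + i\lambda$ and using the conjugation identity $e^{sx^1}\Delta_g e^{-sx^1} = (\partial_{x^1}-s)^2 + \Delta_{g_0}$, the PDE reduces (since $v_s$ is independent of $x^1$) to
\begin{equation*}
    [(\partial_{x^1}-s)^2 + \Delta_{g_0}]\, r_s \;=\; (-\Delta_{g_0}-s^2)\,v_s.
\end{equation*}

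For the quasi-mode I take $v_s = e^{is\psi(x')}b_s(x')$ with $\psi$ the complex phase furnished by Proposition \ref{Prop-GBQ}, and the amplitude expanded as in \eqref{eq:gbq-wkb}, $b_s = \tau^{(n-2)/4}\sum_{k=0}^{N}s^{-k}b_{(-k)}$. Plugging into $-\Delta_{g_0}-s^2$ produces a polynomial in $s$: the $s^2$-coefficient vanishes by the eikonal equation $|\nabla\psi|^2=1$ along $\gamma$, the $s^1$-coefficient is the first-order transport equation along $\gamma$ that fixes $b_{(0)}$ (already recorded in Proposition \ref{Prop-GBQ}), and the successive $s^{1-k}$-coefficients are inhomogeneous transport equations for the $b_{(-k)}$, solvable in a $\delta$-tubular neighborhood of $\gamma$ because $\gamma$ is non-tangential. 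Choosing $N$ large enough in terms of $k$ and $K$ yields $\|(-\Delta_{g_0}-s^2)v_s\|_{H^k(M_0)} = O(\tau^{-K})$ uniformly for $|\lambda|<1$, which is exactly the residual provided by Proposition \ref{Prop-GBQ}.

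The remainder is then obtained from the Carleman-type solvability estimate for the conjugated operator $L_s:=(\partial_{x^1}-s)^2+\Delta_{g_0}$ on $(M,g)$: for $\tau$ sufficiently large, $L_s$ admits a right inverse $G_s$ with $\|G_s f\|_{H^k(M)}\leq C\tau^{-1}\|f\|_{H^{k-1}(M)}$, uniformly in $|\lambda|<1$. Applied to the source displayed above, this yields the bound \eqref{eqn : error estimates for CGO} directly. The main obstacle is precisely this solvability estimate in the Sobolev scale with uniformity in the complex shift $\lambda$: one needs a Carleman estimate for $\Delta_g$ with the limiting Carleman weight $x^1$ on a CTA manifold, together with a duality/commutator upgrade from $L^2$ to $H^k$. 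These tools are by now classical in the CTA setting, so the proof essentially consists of extracting from \cite{feizmohammadi2020inverse, lassas2021inverse} the precise form of the expansion \eqref{eq:gbq-wkb} and the corresponding remainder rate needed for the higher-order linearizations later in the paper.
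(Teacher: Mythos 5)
Your proposal is correct and follows essentially the same route as the paper, which does not prove Proposition \ref{Prop-CGO} itself but cites it from \cite{feizmohammadi2020inverse} and \cite{lassas2021inverse}; your sketch (conjugation by $e^{sx^1}$, WKB expansion of the amplitude solving eikonal and transport equations along the non-tangential geodesic, and a Carleman-type solvability estimate for the conjugated operator to control the remainder in $H^k$) is precisely the construction carried out in those references. The only loose point is the exact form of the right-inverse bound, which in the literature is stated in semiclassical Sobolev norms, but you correctly flag that the $L^2$-to-$H^k$ upgrade and the choice of $N$ large relative to $k$ and $K$ absorb this bookkeeping.
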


\subsection{Linearization of Nonlinear Equations}
We perform $L$-fold linearization on \eqref{eq:quasi_intrinsic} for $ L \in \mathbb{Z}_+$.  Specifically, we take $ (h_1, \cdots, h_L) \in (C_c^\infty (\partial M))^L,$ and  $ \epsilon:=(\epsilon^1,\cdots,\epsilon^L)\in \mathbb{R}^L $ close to $ 0 $, and prescribe for \eqref{eq:quasi_intrinsic} the boundary value
\begin{equation*}
    f_\epsilon:=f_\epsilon(t,h_1,\cdots,h_L)=t+\sum_{k=1}^{L}\epsilon^k h_k\in B_{\partial M}^{2,\alpha}(t,\delta(t)) 
\end{equation*}
lying in (\ref{eq:def-domain}). The $ L $-th order Fréchet derivative of $ N_{\gamma} $ at the constant function $ t $ reads 
\begin{displaymath}
    (D^L N_{\gamma})_t(h_1,\cdots,h_L):=\partial_{\epsilon^1}\cdots\partial_{\epsilon^L}N_{\gamma} \left(t+\sum_{k=1}^{L}\epsilon^k h_k\right)\bigg|_{\epsilon = 0}.
\end{displaymath}
Analogously, if we let $ u^{f_\epsilon} $ be the unique solution, near $t$, to (\ref{eq:quasi_gen}) with boundary value $ f_\epsilon $, we write the Fréchet multi-derivative of $ u^{f_\epsilon} $ at $ \epsilon=0 $ as
\begin{displaymath}
    w_t^{h_1,\cdots,h_L}:=\partial_{\epsilon^1}\cdots\partial_{\epsilon^L}u^{f_\epsilon}\big|_{\epsilon =0}.
\end{displaymath}\par

When $ L=1 $, we have $ f_\epsilon=t+\epsilon^1 h_1 $. Now we make the Taylor expansion of $ u^{f_\epsilon} $  at $ \epsilon^1=0 $, which is of the form
\begin{equation*}
    u^{f_\epsilon} =t+\epsilon^1 w^{h_1}+o(|\epsilon|).
\end{equation*}
To compute the linearization of  (\ref{eq:quasi_intrinsic}), we need the Taylor expansion of the conductivity
\begin{equation*}
    a(s,x,p)=a(t,x,0)+O(|s-t|+|p|).
\end{equation*}
Combining the two formulae above leads to
\begin{equation*}
    a\left(u^{f_\epsilon},x,\nabla u^{f_\epsilon}\right)\nabla u^{f_\epsilon}=\epsilon^1 a(t,x,0)\nabla w^{h_1} +o(\epsilon^1)
\end{equation*}
and thus the linearization of (\ref{eq:quasi_intrinsic}) is
\begin{equation}\label{eq:1st-order-int} \left\{\begin{aligned}
    \nabla^*(a(t,x,0)\nabla w^{h_1})&=0\\ w^{h_1}|_{\partial M} &= h^1. \end{aligned}\right.
\end{equation}
Moreover, the DN map of (\ref{eq:1st-order-int}) is given by 
\begin{equation*}
    (DN_{\gamma})(h^1)= a(t,x,0)\nu\cdot\nabla w^{h_1}.
\end{equation*}\par

When $L=2$, we have $ f_\epsilon=t+\epsilon^1 h_1+\epsilon^2 h_2 $. 
Since $ u^{t} \equiv t $ for $t \in \mathbb{R}$, we first compute
\begin{align*}
    \partial_{\epsilon^1}\partial_{\epsilon^2}\left(a\left(u^{f_\epsilon},x,\nabla u^{f_\epsilon}\right)\nabla u^{f_\epsilon}\right) \Big|_{\epsilon = 0} =& \partial_{\epsilon^1}a\left(u^{f_\epsilon},x,\nabla u^{f_\epsilon}\right)\nabla \left(\partial_{\epsilon^2}u^{f_\epsilon}\right) \Big|_{\epsilon = 0}\\
    &+\partial_{\epsilon^2}a\left(u^{f_\epsilon},x,\nabla u^{f_\epsilon}\right)\nabla \left(\partial_{\epsilon^1}u^{f_\epsilon}\right) \Big|_{\epsilon = 0}\\
    &+a(t,x,0)\nabla \left(\partial_{\epsilon^1}\partial_{\epsilon^2}u^{f_\epsilon}\right) \Big|_{\epsilon = 0}.
\end{align*}
Note that $\partial_{\epsilon^j}u^{f_\epsilon}$ solves \eqref{eq:1st-order-int} with index $1$ replaced by $j$.
Then differentiating (\ref{eq:quasi_intrinsic}) twice in $ \epsilon^1$ and $\epsilon^2 $ respectively at $\epsilon = 0$ yields
\begin{multline}\label{eq:2nd-order-orig}
    \nabla^*\big(w^{h_1}\partial_s a(t,x,0)\nabla w^{h_2}+w^{h_2}\partial_s a(t,x,0)\nabla w^{h_1}+\partial_l w^{h_1}\partial_{p^l} a(t,x,0) \nabla w^{h_2}\\
    +\partial_l w^{h_2}\partial_{p^l} a(t,x,0) \nabla w^{h_1}+a(t,x,0)\nabla w^{h_1,h_2}\big)=0.
\end{multline}

To generalize this calculation to higher-orders, we  introduce the following shorthand notations. For any $ 0\leq\alpha \leq n$, we define the operator $ P_\alpha $ by
\begin{equation}\label{def:P}
    P_\alpha u(x):=\begin{cases}
        u(x) & ,\alpha=0, \\
        \partial_{l} u(x) & ,1\leq \alpha=l\leq n.
    \end{cases}
\end{equation}
We let the tensor $ (T^{\alpha_1,\cdots,\alpha_L})_{0\leq \alpha_1,\cdots,\alpha_L\leq n}  $ denote the higher order derivatives of $a(s,x,p)$ 
\begin{equation}\label{def:T}
    T^{\alpha_1,\cdots,\alpha_L}(x) := Q_{\alpha_1}\cdots Q_{\alpha_L} a(s,x,p)|_{(s,p)=0},  
\end{equation}
which consists of first derivatives
\begin{equation*}
    Q_\alpha u(s,x,p):=\begin{cases}
        \partial_s u(s,x,p), & \alpha=0, \\
        \partial_{p^l} u(s,x,p), & 1\leq \alpha=l\leq n.
    \end{cases}
\end{equation*}
For convenience in doing summation, we also write
\begin{equation}\label{def:T-phi}
    \begin{aligned}
        T^J(u_1,\cdots,u_J)(x)&:=T^{\alpha_1,\cdots,\alpha_J}(x)P_{\alpha_1}u_1(x) P_{\alpha_2}u_2(x) \cdots P_{\alpha_J}u_J(x), \\	
        \phi^{L}(h_{1},\cdots,h_{L+1})(x)&:=\sum_{\substack{1\leq J\leq L-1\\1\leq k_1<\cdots<k_J\leq L}} T^J(w^{h_1,\cdots,h_{k_1} },\cdots,w^{h_{k_{J-1}+1},\cdots,h_{k_J}})\nabla w^{h_{k_J+1},\cdots,h_{L+1}}.
    \end{aligned}
\end{equation}

With these notations,  (\ref{eq:2nd-order-orig}) is rewritten as 
\begin{equation}\label{eq:2nd-order}  
	\left\{
	\begin{aligned}
    \nabla^*\left(a(t,x,0) \nabla w^{h_1,h_2}+\sum\limits_{\sigma\in\pi(2)}T^\alpha P_\alpha w^{h_{\sigma(1)}}\nabla w^{h_{\sigma(2)}}\right)&=0\\ w^{h_1,h_2}|_{\partial M}&=0, \end{aligned}\right.
\end{equation}
where $ \pi(k) $ denotes the permutation group of order $ k $. Meanwhile, the second-order linearized DN map is expressed by 
\begin{equation}\label{eq:2nd-order-DN-0}
    (D^{2}N_{\gamma} )_t(h_1,h_2)=\nu\cdot\left(a(t,x,0) \nabla w^{h_1,h_2}+\sum\limits_{\sigma\in\pi(2)}T^\alpha P_\alpha w^{h_{\sigma(1)}}\nabla w^{h_{\sigma(2)}}\right).
\end{equation}
It turns out that the value of $ t $ actually does not affect the proof. Without loss of generality, we can fix $ t=0 $.\par

Through an analogous process of differentiation, the higher-order linearization of (\ref{eq:quasi_intrinsic}) is given by
\begin{equation}\label{eq:higher-order-0}
    \left\{\begin{aligned}
        \nabla^* (a(t,x,0) \nabla w ^{h_1,\cdots,h_{L+1}})&=-\sum\limits_{\sigma\in\pi(L+1)}\nabla^*\big(\phi^{L}(h_{\sigma(1)},\cdots,h_{\sigma(L+1)}) &\\
        &\qquad +T ^L( w^{h_{\sigma(1)}},\cdots,  w^{h_{\sigma(L)}})\nabla w^{h_{\sigma(L+1)}}\big)&\quad\text{\rm in}~M,\\
        w ^{h_1,\cdots,h_{L+1}} & =0 & \quad\text{\rm on}~\partial M,
    \end{aligned}\right.
\end{equation}
where $ L\geq 2 $, with the corresponding DN map
\begin{equation}\label{eq:higher-order-0-DN}
    \begin{aligned}
        \lefteqn{(D^{L+1}N_{\gamma} )_t(h_1,h_2,\cdots,h_{L+1})}\\
        &=\nu\cdot\bigg(a(t,x,0) \nabla w ^{h_1,\cdots,h_{L+1}}+\sum\limits_{\sigma\in\pi(L+1)}\phi^{L}(h_{\sigma(1)},\cdots,h_{\sigma(L+1)})\\
        & \qquad +\sum\limits_{\sigma\in\pi(L+1)}T ^L( w^{h_{\sigma(1)}},\cdots,  w^{h_{\sigma(L)}})\nabla w^{h_{\sigma(L+1)}} \bigg).
    \end{aligned}
\end{equation}

\section{Recover the Lower-Order Derivatives}
In this section, we recover the zeroth and the first derivatives of $ a(s,x,p) $ in $s$ and $p$ at $(s, p) = (t, 0)$. 

\subsection{First-Order Linearization: Recover the Conformal Factor}
Let $ h\in C^{\infty} (\partial M) $ and $ w_{m}^h(x) $ be the solution to (\ref{eq:1st-order-int}) with coefficient $a_m(t,x,0)$ for $m=1,2$. If the DN maps of (\ref{eq:quasi_intrinsic}) with $ a_1(s,x,p) $ and $ a_2(s,x,p) $ coincide, the DN maps for the first-order linearized equations (\ref{eq:1st-order-int}) with $ a_1(t,x,0) $ and $ a_2(t,x,0) $ must agree as well. 

Employing Proposition \ref{Prop-0th}, we can uniquely recover the conformal factor; namely, we can write
\begin{equation}\label{eq:1st-order-coeff}
    a(t,x,0):=a_1(t,x,0)=a_2(t,x,0).
\end{equation}


Moreover,   uniqueness of the solution to (\ref{eq:1st-order-int}) also implies that we can define
\begin{equation}\label{eq:1st-sol}
    w^h:=w_{1}^{h}=w_{2}^{h}   \quad \mbox{in $M$}.
\end{equation}\par

\subsection{Second-Order Linearization: Recover the First Derivatives}
When $ L=2 $, we differentiate (\ref{eq:quasi_intrinsic}) by $ \partial_{\epsilon^1}\partial_{\epsilon^2}  $ at $ \epsilon=0 $, and obtain the equation (\ref{eq:2nd-order}) with $ a_1 $ and $ a_2 $. For $ h_1,h_2\in C^{\infty} (\partial M) $, denote by $ w_{m}^{h_1,h_2} $ the solution to (\ref{eq:2nd-order}) with $a_m(s,x,p),m=1,2$ and consider the function 
$$w_0^{h_1,h_2}:=w_1^{h_1,h_2}-w_2^{h_1,h_2},$$
the coefficients $$T_0^\alpha(x):=T_1^\alpha(x)-T_2^\alpha(x),$$ and the operator $$N := N_{\gamma_1} - N_{\gamma_2}.$$
We plug (\ref{eq:1st-order-coeff}) and (\ref{eq:1st-sol}) into (\ref{eq:2nd-order}) and (\ref{eq:2nd-order-DN-0}) for $ m=1,2 $ respectively. Their difference is
\begin{equation}\label{eq:2nd-order-1}
    \begin{cases}
        \nabla^*\left(a(t,x,0)\nabla w_0^{h_1,h_2}\right)=-\sum\limits_{\sigma\in\pi(2)}\nabla^*\left(T_0^\alpha P_\alpha w^{h_{\sigma(1)}}\nabla w^{h_{\sigma(2)}}\right)&\quad\text{\rm in}~M,\\
        w_0^{h_1,h_2}=0&\quad\text{\rm on}~\partial M,
    \end{cases}
\end{equation}
and 
\begin{equation}\label{eq:2nd-order-DN}
    0=\left(D^2 N\right)_t (h_1,h_2)=\nu\cdot \left(a(t,x,0)\nabla w_0^{h_1,h_2}+\sum\limits_{\sigma\in\pi(2)}T_0^\alpha P_\alpha w^{h_{\sigma(1)}}\nabla w^{h_{\sigma(2)}}\right) .
\end{equation}\par
Let us temporarily write
$$ X:= \left(a(t,x,0)\nabla w_0^{h_1,h_2}+\sum\limits_{\sigma\in\pi(2)}T_0^\alpha P_\alpha w^{h_{\sigma(1)}}\nabla w^{h_{\sigma(2)}}\right). $$
Then, equation (\ref{eq:2nd-order-1}) is rewritten as $ \nabla^*X=0 $. Applying Stokes' formula to the weak form of (\ref{eq:2nd-order-DN}), we have that for any $ h_3\in C^\infty(\partial M) $ there holds
\begin{equation}\label{eq:2nd-order-IBP}
    \begin{split}
    0&=\int_{\partial M} h_3 \nu\cdot X =\int_M w^{h_3} \nabla^*X+\langle\nabla w^{h_3},X \rangle=\int_M w^{h_3} \langle\nabla w^{h_3},X \rangle\\
    &\xlongequal{}\int_M \langle\nabla w^{h_3},a(t,x,0)\nabla w_0^{h_1,h_2}\rangle+\int_M \langle\nabla w^{h_3},X-a(t,x,0)\nabla w_0^{h_1,h_2} \rangle.
    \end{split}
\end{equation} 
Applying Stokes' formula to the first integral with the boundary condition in (\ref{eq:2nd-order-1}) gives
\begin{equation*}
    \int_M \langle\nabla w^{h_3},a(t,x,0)\nabla w_0^{h_1,h_2}\rangle=-\int_M \nabla^*(a(t,x,0)\nabla w^{h_3})w_0^{h_1,h_2}=0.
\end{equation*}
Therefore, the second integral in the last line of (\ref{eq:2nd-order-IBP}) must vanish; namely
\[
    \int_M \langle\nabla w^{h_3},X-a(t,x,0)\nabla w_0^{h_1,h_2} \rangle=0.
\]
It follows that for any $ h_1,h_2,h_3\in C^\infty(\partial M) $
\begin{equation}\label{eq:2nd-order-2}
    \sum_{\sigma\in\pi(2)}\int_M T_0^\alpha(x) P_\alpha w^{h_{\sigma(1)}}\langle\nabla w^{h_{\sigma(2)}},\nabla w^{h_3}\rangle=0.
\end{equation}

We first show $ T_0^0\equiv 0 $. Taking $ h_1\equiv 1 $ in (\ref{eq:2nd-order-2}) yields
\begin{displaymath}
    \int_M T_0^0(x) \langle\nabla w^{h_{2}},\nabla w^{h_3}\rangle=0,
\end{displaymath}
and it suffices to prove the following proposition.

\begin{proposition}\label{Prop-basic}
    Let $ (M,g) $ be as in Theorem \ref{thm:main}. Suppose $ \phi $ is a differentiable function on $ \overline{M} $. If any two solutions $ w^{h_1} ,w^{h_2}   $ of the equation (\ref{eq:1st-order-int}) with boundary values $ h_1,h_2 $ respectively satisfy the identity
    \begin{equation}\label{eq:2nd-order-8}
        \int_M \phi \langle \nabla w^{h_1} ,\nabla w^{h_2} \rangle =0,
    \end{equation}
    then the function $ \phi $ must vanish in $ M $.
\end{proposition}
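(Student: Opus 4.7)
The plan is to reduce (\ref{eq:2nd-order-8}) to the density identity of Lemma~\ref{lmm:density-cta} via an asymptotic analysis of CGO solutions. The key observation is that for CGO solutions carrying the exponential factor $e^{\mp(\tau+i\lambda)x^1}$, the gradient product is dominated to order $\tau^2$ by the derivatives of that exponential and of the Gaussian-beam phase; extracting this leading-order contribution will convert the gradient integral into the standard CGO integral already exploited in the proof of Lemma~\ref{lmm:density-cta}.

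First, I will perform the conformal change $\tilde g := a(t,\cdot,0)^{2/(n-2)}\,g$, so that the equation in (\ref{eq:1st-order-int}) becomes $\Delta_{\tilde g}w=0$ and the admissible test functions $w^{h_1},w^{h_2}$ are exactly the $\tilde g$-harmonic functions on $M$. The conformal scalings of the inner product and of the volume element then rewrite the hypothesis as
\[
\int_M \Phi\,\langle \nabla u,\nabla v\rangle_{\tilde g}\,dV_{\tilde g}=0,\qquad \forall\, u,v\in\mathcal{H}_{\tilde g}(M),
\]
where $\Phi$ is $\phi$ multiplied by a positive (nowhere-vanishing) power of $a(t,\cdot,0)$; it thus suffices to show $\Phi\equiv 0$.

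Next, I will fix a non-tangential geodesic $\gamma\subset M_0$ (Definition~\ref{def:non-tan-geodesic}) and plug in a pair of CGO solutions $u_1:=u_{\tau+i\lambda_1}$ and $u_2:=u_{-\tau+i\lambda_2}$ produced by Proposition~\ref{Prop-CGO}, with oppositely-oriented Gaussian-beam phases concentrated along $\gamma$. A direct expansion using $\partial_{x^1}u_j\sim\mp\tau\,u_j$ together with $\partial_{x^a}u_j\sim i\tau\,\partial_{x^a}\psi_j\,u_j$ along $\gamma$ and the phase conditions of Proposition~\ref{Prop-GBQ} yields, as $\tau\to\infty$,
\[
\langle\nabla u_1,\nabla u_2\rangle_{\tilde g}=-c\,\tau^2\,u_1u_2+O(\tau)
\]
for some nonzero constant $c$. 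Substituting into the rewritten hypothesis and dividing by $-c\tau^2$ gives, in the limit,
\[
\lim_{\tau\to\infty}\int_M \Phi\,u_{\tau+i\lambda_1}\,u_{-\tau+i\lambda_2}\,dV_{\tilde g}=0,
\]
which is precisely the CGO limit underlying the proof of Lemma~\ref{lmm:density-cta}. From there, Gaussian-beam concentration produces the geodesic ray transform along $\gamma$ of the $(\lambda_1-\lambda_2)$-Fourier coefficient of $\Phi$ in the $x^1$-variable; varying $\lambda_1-\lambda_2\in\mathbb{R}$ and the non-tangential geodesic $\gamma$, injectivity of the ray transform on the simple transversal $(M_0,g_0)$ together with Fourier inversion in $x^1$ then forces $\Phi\equiv 0$, and hence $\phi\equiv 0$.

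The main obstacle lies in the asymptotic step: if both CGOs are chosen with the same phase $\psi$, the transverse contribution $\tau^2\,|d\psi|_{g_0}^2\,u_1u_2$ equals $\tau^2\,u_1u_2$ on $\gamma$ (because $|d\psi|_{g_0}^2=1$ there) and exactly cancels the longitudinal term $-\tau^2\,u_1u_2$, producing degenerate leading behavior. Avoiding this collapse requires a mismatched Gaussian-beam pair, for instance taking $u_2$ along the reversed geodesic or with a sign-flipped phase, and then carefully tracking the subleading $O(\tau)$ corrections coming from amplitude derivatives, from the transverse quadratic part of $\psi$, and from the CGO error terms $r_{\tau+i\lambda}$ controlled by \eqref{eqn : error estimates for CGO}. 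Once this asymptotic is in place, the density mechanism of Lemma~\ref{lmm:density-cta} closes the argument without any boundary analysis.
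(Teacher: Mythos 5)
Your reduction hinges on the asymptotic $\langle\nabla u_1,\nabla u_2\rangle_{\tilde g}=-c\,\tau^2 u_1u_2+O(\tau)$ with $c\neq 0$, and this is exactly where the argument breaks. Writing $\nabla u_j=\rho_j u_j+O(1)\cdot u_j$ with $\rho_1=s_1(-\partial_1+i\nabla\psi_1)$ and $\rho_2=s_2(-\partial_1+i\nabla\psi_2)$, the leading coefficient is $\rho_1\cdot\rho_2=s_1s_2\bigl(1-\langle\nabla\psi_1,\nabla\psi_2\rangle\bigr)$ since $\partial_1\perp\nabla\psi_j$. For the product $u_1u_2$ to stay bounded one must take $s_2=-\tau+i\lambda_2$, and for it to concentrate on the geodesic the two phases must have matching real parts there, i.e.\ $\nabla\mathrm{Re}\,\psi_1=\nabla\mathrm{Re}\,\psi_2=\dot\gamma$ and $\nabla\mathrm{Im}\,\psi_j=0$ on $\gamma$; then $\langle\nabla\psi_1,\nabla\psi_2\rangle\to|\dot\gamma|^2=1$ and the leading coefficient vanishes \emph{identically on the geodesic} --- this is the null condition $\rho\cdot\rho=0$ forced by harmonicity, not an accident of one particular pairing. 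Your proposed escape (reversed geodesic or sign-flipped phase, $\psi_2=-\psi_1$) does make $1-\langle\nabla\psi_1,\nabla\psi_2\rangle=2$, but it simultaneously introduces the factor $e^{2i\tau\mathrm{Re}\,\psi_1}=e^{2i\tau t}$ along $\gamma$ into $u_1u_2$, so the concentrated integral decays rapidly by non-stationary phase and the limit carries no information about $\Phi$. Either way the $\tau^2$ term is useless, the information sits in the uncontrolled $O(\tau)$ corrections, and the claimed reduction to $\int_M\Phi\,u_1u_2=0$ does not go through. (A smaller inaccuracy: even off the degeneracy, $c$ is a function of $x'$, not a constant.)

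This obstruction is precisely why the paper does \emph{not} attack \eqref{eq:2nd-order-8} with CGOs directly. Its proof integrates by parts to move the derivatives onto $\phi$: using harmonicity of $w^{h_1},w^{h_2}$ one converts $\int_M\phi\langle\nabla w^{h_1},\nabla w^{h_2}\rangle$ into $\int_M\partial_j(\widetilde A^{ij}\partial_i\widetilde\phi)\,w^{h_1}w^{h_2}$ plus boundary contributions, and only \emph{then} invokes the density of products of harmonic functions (Lemma~\ref{lmm:density-cta}). The price is that the boundary terms must be killed first, which is done by a separate boundary determination: Alessandrini-type singular solutions (Proposition~\ref{Prop-Green}) concentrated near a boundary point give $\phi|_{\partial M}=0$, and Lemma~\ref{lmm:tangent} handles the remaining flux term $\nu_j\widetilde A^{ij}\partial_i\widetilde\phi$. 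So the boundary analysis you hoped to avoid is not optional --- it is what makes the integration by parts, and hence the reduction to Lemma~\ref{lmm:density-cta}, legitimate. To repair your write-up you would need to either carry out this two-step scheme or find a genuinely non-degenerate family of harmonic pairs, which the standard CGO constructions on CTA manifolds do not provide.
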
\par

    
Firstly, we demonstrate that $ \phi $ vanishes on the boundary. 
\begin{proof}[Proof of Proposition \ref{Prop-basic} on the boundary]
    We prove the proposition by making order estimates for (\ref{eq:2nd-order-8}) with a series of singular solutions (\ref{singular_solution}), which is inspired by \cite[Lemma 4.7]{sun1997inverse}. 
   
   Fix an arbitrary boundary point $ x_0\in\partial M $. Since $ (M,g)=(\mathbb{R}\times M_0,c \cdot (e\oplus g_0)) $ is a simple CTA manifold as defined in Definition \ref{def:cta-metric}, $ M $ is diffeomorphic to a Euclidean domain in $ \mathbb{R}^{n}  $, and we define $ (A^{ij} )=\sqrt{\det g}(g_{ij} )^{-1} $ as in (\ref{def:corresponding_metric}). Moreover, we denote $ \widetilde{A}^{ij} (x):=a(t,x,0) A^{ij} (x) $, so that the equation (\ref{eq:1st-order-int}) is equivalent to
   \begin{equation}\label{eq:prop5}
    \partial_i(\widetilde{A}^{ij}\partial_j w)=0.
   \end{equation}
    
    Without loss of generality, we can move $ x_0 $ to the origin and choose suitable coordinates such that $ \partial M \cap B_{2\rho}(x_0) \subset \mathbb{R}^{n-1}\times\{0\} $ for some $ \rho $, where $$ B_R(y):= \left\{x\in\mathbb{R}^{n}~|~\mathrm{dist}_{\mathbb{R}^n}(x,y)<R  \right\}. $$
    Moreover, we may require that $ \widetilde{A}^{ij}(x_0)=\widetilde{A}^{ij} (0)=\delta^{ij} $ and the outer normal vector $ \nu(x) $ at $ x_0 $ is $ \nu(x_0)=(0,\cdots,0,-1) $. Then we define $ \widetilde{M} $ by gluing to $ M $ a sufficiently small semi-ball $$B^-_{2\rho}(x_0):=B_{2\rho}(x_0)\cap\{x^n-y^n<0\} $$ with metric $ \tilde{g} $ a smooth extension of $ g $.\par
    Since $ \partial M $ is smooth, we can choose $ \rho $ sufficiently small such that $ B_{2\rho}(x_0)\cap M=B_{2\rho}^+(x_0)\cap M $, where $ B_{\rho}^+(y):=B_{\rho}(y)\cap\{x^n-y^n>0\} $. We consider a family of points 
    $$ x_\tau:=\tau \nu(x_0)=(0,\cdots,0,-\tau),~\mbox{for $ 0<\tau<\rho $},$$
    and the restriction of $ B_{\rho}(x_\tau) $ at $ \partial M $
    $$ B_{\rho}^{n-1}(x_\tau):=B_{\rho}(x_\tau)\cap \partial M =B_{\rho}(x_\tau)\cap (\mathbb{R}^{n-1}\times\{0\}). $$
   These are as depicted below.\par

    \begin{center}
    \begin{tikzpicture}[x=0.75pt,y=0.75pt,yscale=-1,xscale=1]
        \draw  [fill={rgb, 255:red, 255; green, 0; blue, 0 }  ,fill opacity=0.16 ][line width=0.75]  (235.48,194.76) .. controls (210.25,187.45) and (211.7,142.82) .. (236.73,117.21) .. controls (261.77,91.61) and (300.67,93.8) .. (321.85,93.07) .. controls (343.03,92.34) and (378.43,93.07) .. (414.48,118.68) .. controls (450.53,144.28) and (455.36,167.2) .. (426.51,184.03) .. controls (397.67,200.86) and (376.04,199.39) .. (323.17,199.39) .. controls (270.29,199.39) and (260.72,202.08) .. (235.48,194.76) -- cycle ;
        \draw    (323.17,199.39) -- (323.17,310.81) ;
        \draw [shift={(323.17,312.81)}, rotate = 270] [color={rgb, 255:red, 0; green, 0; blue, 0 }  ][line width=0.75]    (10.93,-3.29) .. controls (6.95,-1.4) and (3.31,-0.3) .. (0,0) .. controls (3.31,0.3) and (6.95,1.4) .. (10.93,3.29)   ;
        \draw [line width=0.75]    (323.17,196.59) -- (323.17,199.39) ;
        \draw [shift={(323.17,199.39)}, rotate = 90] [color={rgb, 255:red, 0; green, 0; blue, 0 }  ][fill={rgb, 255:red, 0; green, 0; blue, 0 }  ][line width=0.75]      (0, 0) circle [x radius= 3.35, y radius= 3.35]   ;
        \draw [line width=0.75]    (323.17,208) -- (323.17,213.62) ;
        \draw [shift={(323.17,213.62)}, rotate = 90] [color={rgb, 255:red, 0; green, 0; blue, 0 }  ][fill={rgb, 255:red, 0; green, 0; blue, 0 }  ][line width=0.75]      (0, 0) circle [x radius= 3.35, y radius= 3.35]   ;
        \draw  [fill={rgb, 255:red, 255; green, 255; blue, 0 }  ,fill opacity=0.14 ][line width=0.75]  (265.39,213.62) .. controls (265.39,181.71) and (291.26,155.84) .. (323.17,155.84) .. controls (355.08,155.84) and (380.94,181.71) .. (380.94,213.62) .. controls (380.94,245.53) and (355.08,271.4) .. (323.17,271.4) .. controls (291.26,271.4) and (265.39,245.53) .. (265.39,213.62) -- cycle ;
        \draw [line width=0.75]  [dash pattern={on 0.84pt off 2.51pt}]  (380.94,213.62) -- (265.39,213.62) ;
        \draw [color={rgb, 255:red, 0; green, 12; blue, 252 }  ,draw opacity=1 ][fill={rgb, 255:red, 0; green, 0; blue, 0 }  ,fill opacity=1 ][line width=1.5]    (267.33,199.17) -- (379.33,199.17) ;
        \draw    (400.33,213.17) .. controls (380.83,213.17) and (383.2,215.07) .. (362.94,200.33) ;
        \draw [shift={(361.33,199.17)}, rotate = 36.03] [color={rgb, 255:red, 0; green, 0; blue, 0 }  ][line width=0.75]    (10.93,-3.29) .. controls (6.95,-1.4) and (3.31,-0.3) .. (0,0) .. controls (3.31,0.3) and (6.95,1.4) .. (10.93,3.29)   ;

        \draw (270.56,111.89) node [anchor=north west][inner sep=0.75pt]    {$M$};
        \draw (325.17,199.99) node [anchor=north west][inner sep=0.75pt]  [font=\fontsize{0.59em}{0.71em}\selectfont]  {$x_{0 } = 0$};
        \draw (330.94,300.2) node [anchor=north west][inner sep=0.75pt]  [font=\scriptsize]  {$\nu =( 0,0,\cdots ,-1)$};
        \draw (323.17,217.02) node [anchor=north west][inner sep=0.75pt]  [font=\fontsize{0.59em}{0.71em}\selectfont]  {$x_{\tau } =\tau \nu $};
        \draw (273.32,227.61) node [anchor=north west][inner sep=0.75pt]  [font=\footnotesize]  {$B_{\rho}( x_{\tau })$};
        \draw (283.33,175.57) node [anchor=north west][inner sep=0.75pt]  [font=\footnotesize]  {$B_{\rho}^{+}( x_{\tau }) \cap M$};
        \draw (398.33,206.57) node [anchor=north west][inner sep=0.75pt]  [font=\footnotesize]  {$ \begin{array}{l}
        B_{\rho }^{n-1}( x_{\tau })
        \end{array}$};
    \end{tikzpicture}
    \end{center}

    We extend $ \widetilde{A}^{ij} $ to $ \widetilde{M} $ while preserving its ellipticity and $ C^{2,\alpha}  $ regularity. We invoke Proposition \ref{Prop-Green} to construct a series of singular solutions to (\ref{eq:prop5}) parametrized by $ \tau $:
    \begin{equation}\label{eq:sol-green}
        w_\tau(x)=H_n(x-x_\tau)+\omega_{n,\tau} (x-x_\tau)+\delta_{n,\tau}(x-x_\tau) ,
    \end{equation}
    where the remainder terms $ \omega_{n,\tau},\delta_{n,\tau}  $ are given by
    \begin{equation*}
        \omega_{n,\tau} (x):=\omega_n\left(\frac{x}{\sqrt{\det \widetilde{A}(x_\tau)}}\right),\quad \delta_{n,\tau}(x):=H_n\left(\frac{x}{\sqrt{\det \widetilde{A}(x_\tau)}}\right)-H_n(x).
    \end{equation*}
    It is easy to check that $ \omega_{n,\tau} $ satisfies order estimates (\ref{singular_solution-estimate}). Since $ A(x) $ is differentiable and $ \widetilde{A}^{ij} (x_0)=\delta^{ij}  $, for $ \tau<\rho $ we know that
    $$ \delta_{n,\tau}(x)=K(\tau)\tau|x|^{2-n}, $$
    where $ |K(\tau)|\leq K_\rho $ for some $ K_\rho>0 $.

    Let $ w^{h_1}=w^{h_2}=w_\tau $ for $ \tau<r/2$. Then (\ref{eq:2nd-order-8}) becomes
    \begin{equation}\label{eq:2nd-order-8.5}
        \int_{M-B_{\rho}(x_\tau)}\phi \langle\nabla w_\tau,\nabla w_\tau \rangle +\int_{B_{\rho}^+(x_\tau)\cap M}\phi\partial_i w_\tau {A}^{ij} \partial_jw_\tau =0.
    \end{equation}
   Exploiting the structure of (\ref{eq:sol-green}) enables us to uniformly bound the first term of (\ref{eq:2nd-order-8.5}):
    \begin{align*}
        \left|\int_{M-B_{\rho}(x_\tau)}\phi  \langle\nabla w_\tau,\nabla w_\tau \rangle\right|&\leq C\int_{M-B_{\rho/2}(x_0)}|\phi|\left(|\nabla H_n|+|\nabla \omega_{n,\tau}|+|\nabla \delta_{n,\tau}|\right)^2\\
        &\leq (n-2)^2 C\int_{M-B_{\rho/2}(x_0)}|\phi|(1+\widetilde{C}_{\beta,\widetilde{M},(\widetilde{A}^{ij} )}|x|^{\beta}+K_\rho\tau)^2|x|^{2-2n}\\
        &=\overline{C}(\beta,M,(\widetilde{A}^{ij}),\rho).
    \end{align*}
    For the second term of (\ref{eq:2nd-order-8.5}), we make integration by parts. Let $ \widetilde{\phi}:=\phi/a(t,x,0) $, we have
    \begin{align*}
        \lefteqn{\int_{B_{\rho}^+(x_\tau)\cap M}{\phi}\partial_i w_\tau {A}^{ij} \partial_jw_\tau=\int_{B_{\rho}^+(x_\tau)\cap M}\widetilde{\phi}\partial_i w_\tau \widetilde{A}^{ij} \partial_jw_\tau} \\
        &= \int_{B_{\rho}^+(x_\tau)\cap M} \widetilde{\phi}(0)\widetilde{A}^{ij} \partial_i w_\tau\partial_j w_\tau+ \int_{B_{\rho}^+(x_\tau)\cap M} (\widetilde{\phi}-\widetilde{\phi}(0))\widetilde{A}^{ij}\partial_i w_\tau\partial_j w_\tau\\
        &=\widetilde{\phi}(0)\left(\int_{ B_{\rho}(x_\tau)\cap\partial M}+\int_{\partial B_{\rho}^+(x_\tau)\cap M}\right)w_\tau \nu_i \widetilde{A}^{ij} \partial_j w_\tau+\int_{B_{\rho}^+(x_\tau)\cap M} \left(\widetilde{\phi} \widetilde{A}^{ij} -\widetilde{\phi}(0)\widetilde{A}^{ij}\right)\partial_i w_\tau\partial_j w_\tau.
    \end{align*}
    The last identity used Stokes' theorem and $ \partial_i(\widetilde{A}^{ij}\partial_j w_\tau)=0 $ in the interior. Then this term can be split as
    \begin{align*}
        \lefteqn{\int_{B_{\rho}^+(x_\tau)\cap M}{\phi}\partial_i w_\tau {A}^{ij} \partial_jw_\tau} \\
        &=\int_{ B_{\rho}(x_\tau)\cap\partial M}\widetilde{\phi}(0)w_\tau \partial_\nu  w_\tau+\int_{\partial B_{\rho}^+(x_\tau)\cap M}\widetilde{\phi}(0)w_\tau \partial_\nu w_\tau \\
        &\quad +\int_{ B_{\rho}(x_\tau)\cap\partial M}\widetilde{\phi}(0)w_\tau \nu_i( \widetilde{A}^{ij}-\delta^{ij})\partial_j  w_\tau+\int_{\partial B_{\rho}^+(x_\tau)\cap M}\widetilde{\phi}(0)w_\tau \nu_i (\widetilde{A}^{ij}-\delta^{ij})\partial_j w_\tau \\
        &\quad +\int_{B_{\rho}^+(x_\tau)\cap M} \left(\widetilde{\phi} \widetilde{A}^{ij} -\widetilde{\phi}(0)\widetilde{A}^{ij}\right)\partial_i w_\tau\partial_j w_\tau\\
        &=I_1(\tau)+I_2(\tau)+I_3(\tau)+I_4(\tau)+I_5(\tau).
    \end{align*}\par
    To establish the order estimates for $ I_l(\tau),~l=1,\dots,5 $ as $ \tau \to 0 $, we first show that for $ k<1-n $ and $ 0<\tau<\rho/2 $,
    \begin{equation}\label{int_estimate}
        C_1\tau^{n-1+k} \leq \int_{B_{\rho}^{n-1}(x_\tau)}|x-x_\tau|^k \leq C'_1 \tau^{n-1+k}.
    \end{equation}
    This is by computing
    \begin{equation*}
        \begin{aligned}
            \int_{B_{\rho}^{n-1}(x_\tau)}|x-x_\tau|^k&=\int_{|(x',0)-(0,-\tau)|<\rho}|(x',0)-(0,-\tau)|^k dx' \\
            &= |\mathbb{S}^{n-2}| \int_0^{(\rho^2-\tau^2)^{1/2}} (r^2+\tau^2)^{\frac{k}{2}}r^{n-2} dr\\
            &= \tau^{n-1+k} |\mathbb{S}^{n-2}| \int_0^{(\rho^2/\tau^2-1)^{1/2}} (s^2+1)^{\frac{k}{2}}s^{n-2} ds.
        \end{aligned}
    \end{equation*}
    Actually, we take
    \begin{equation*}
        C_1:=|\mathbb{S}^{n-2}| \int_0^{\sqrt{3}} (s^2+1)^{\frac{k}{2}}s^{n-2} ds,\quad C'_1:=|\mathbb{S}^{n-2}| \int_0^{\infty} (s^2+1)^{\frac{k}{2}}s^{n-2} ds.
    \end{equation*}

    We next prove that term $ I_1 $ diverges as $ \tau\to 0 $ if $ \widetilde{\phi}(0)\neq 0 $. To see this, we plug (\ref{eq:sol-green}) into $ |I_1| $. For $ x\in B_{\rho}(x_\tau)\cap\partial M $, we have $ \nu\cdot (x-x_\tau)=-\tau $, and thus according to (\ref{singular_solution-estimate}) and (\ref{int_estimate}),
    \begin{align*}
        \partial_\nu w_\tau(x)&=\partial_\nu\left(|x-x_\tau|^{2-n} +\delta_{n,\tau} (x-x_\tau)+\omega_{n,\tau} (x-x_\tau)\right)\\
        &=(n-2)\tau|x-x_\tau|^{-n}+(n-2)K(\tau)\tau^{2}|x-x_\tau|^{-n} +O(|x-x_\tau|^{1-n+\beta}),
    \end{align*}
    where $ 0<\beta<1 $. As a result, we get the lower bound of $ |I_1| $ by
    \begin{align*}
        |I_1|&\geq(n-2)|\widetilde{\phi}(0)|\cdot\int_{B_{\rho}^{n-1}(x_\tau)}\left((\tau+K(\tau)\tau^2)|x-x_\tau|^{2-2n}-C|x-x_\tau|^{3-2n+\beta}\right) dx,\\
        &\geq(n-2)|\widetilde{\phi}(0)|(C_1(1-K_{\rho}\tau )\tau^{2-n}-{C}_2\tau^{2-n+\beta})\\
        &\geq C_3|\widetilde{\phi}(0)|\tau^{2-n} ,\quad\text{for}~\tau <<1.
    \end{align*}\par
    For other terms, we show that
    \begin{equation*}
        \lim_{\tau\to 0}\tau^{n-2} |I_l(\tau)|=0,\quad l=2,\dots,5.
    \end{equation*}
    Since $ |x-x_\tau|=\rho $ on $ \partial B_{\rho}^+(x_\tau)\cap M $, the term $ I_2(\tau) $ can be controlled by some constant only dependent on $ \rho $ using the estimate (\ref{singular_solution-estimate}):
    \begin{equation*}
        |I_2|\leq(n-2)\text{Area}(\partial B_{\rho})\cdot \left(1+K_{\rho}\rho +C_{\beta,\overline{\Omega},(\widetilde{A}^{ij} )}\rho^{\beta}\right)^2 \rho^{3-2n} \leq \tilde{C}(\rho) .
    \end{equation*}
    Similarly, $ |I_4| $ is also uniformly bounded.\par
    Then we estimate $ |I_3| $. Since $ \widetilde{A}^{ij} $ is differentiable, there exists a constant $ C'(\rho) $ such that
    \begin{equation*}
        |\widetilde{A}^{ij}(x)-\delta^{ij} |\leq C'(\rho)|x|\leq C'(\rho)(|x-x_\tau|+\tau).
    \end{equation*}
    Plugging (\ref{singular_solution}) into $ |I_3| $, we obtain
    \begin{align*}
        |I_3|&\leq C'(\rho)|\widetilde{\phi}(0)|\cdot\int_{ B_{\rho}(x_\tau)\cap\partial M}  (|x-x_\tau|+\tau)|w_\tau||\nabla w_\tau|dx\\
        &=C'(\rho)|\widetilde{\phi}(0)|\cdot\int_{ B_{\rho}(x_\tau)\cap\partial M}  (|x-x_\tau|+\tau)\left|(1+K(\tau)\tau)|x-x_\tau|^{2-n} +\omega_{n,\tau} (x-x_\tau)\right|\\
        &\qquad \qquad \qquad \qquad \qquad \qquad \cdot\left|\nabla\left((1+K(\tau)\tau)|x-x_\tau|^{2-n}+\omega_{n,\tau} (x-x_\tau)\right)\right|dx.
    \end{align*}
    Using (\ref{singular_solution-estimate}) and (\ref{int_estimate}) gives
    \begin{align*}
        |I_3|&\leq (n-2)C'(\rho)|\widetilde{\phi}(0)|\cdot\int_{B_{\rho}^{n-1}(x_\tau)}\left(\tau(1+K_{\rho} \tau)^2|x-x_\tau|^{3-2n}+(1+K_{\rho} \tau)^2|x-x_\tau|^{4-2n}\right.\\
        &\qquad \qquad \qquad \qquad \qquad \qquad \qquad \left.+C'\tau|x-x_\tau|^{3-2n+\beta}+C''|x-x_\tau|^{4-2n+\beta}\right)dx\\
        &\leq 2(n-2)C''(\rho)|\widetilde{\phi}(0)|\tau^{3-n}+O(\tau^{3-n+\beta} ).
    \end{align*}

    Noticing that there exists a constant $ C(\rho) $ such that $$ |\widetilde{\phi}(x)\widetilde{A}^{ij}(x)-\widetilde{\phi}(0)\widetilde{A}^{ij}(x)|\leq C(\rho)|x| $$ holds for $ x\in B_{\rho}(0) $. Applying (\ref{singular_solution-estimate}) and (\ref{int_estimate}) again, we obtain the upper bound estimate of $ |I_5(\tau)| $:
    \begin{equation*}
        \begin{aligned}
            |I_5|&\leq C(\rho)\int_{B_{\rho}^+ (x_\tau)\cap M} |x||\nabla w_\tau|^2\\
            &\leq C(\rho)\int_{B_{\rho+\tau}^+ (0)}(|x-x_\tau|+\tau)\cdot\left|\nabla\left((1+K(\tau)\tau)|x-x_\tau|^{2-n}+\omega_{n,\tau} (x-x_\tau)\right)\right|^2\\
            &\leq (n-2)C(\rho)\int_{B_{\rho+\tau}^+ (0)}\left(\tau(1+K(\tau)\tau)^2|x-x_\tau|^{2-2n}+(1+K(\tau)\tau)|x-x_\tau|^{3-2n}\right.\\
            &\qquad \qquad\left.+C'\tau|x-x_\tau|^{2-2n+\beta}+C''|x-x_\tau|^{3-2n+\beta}\right)\\
            &\leq C_4 \tau^{3-n} .
        \end{aligned}
    \end{equation*}

    When we let $ \tau $ decrease to zero, the order estimates above show that $$ \phi(x_0)=a(t,0,0)\widetilde{\phi}(0)=0. $$
    Since $ x_0 $ is arbitrarily chosen, the function $ \phi $ must vanish on the whole boundary.
\end{proof}



Next, we extend the result to the interior.

\begin{proof}[Proof of Proposition \ref{Prop-basic} in the interior]
We have proved that
\begin{equation}\label{eq:2nd-order-7}
    \phi(x)=0,\quad\forall x\in\partial M.
\end{equation}
Applying Stokes' theorem to (\ref{eq:2nd-order-8}) with (\ref{eq:2nd-order-7}) gives
\begin{align*}
    0&=\int_M \phi A^{ij} \partial_j w^{h_1} \partial_i w^{h_2} dx=\int_M \widetilde{\phi} \widetilde{A}^{ij} \partial_j w^{h_1} \partial_i w^{h_2} dx\\
    &=-\int_M \partial_i\widetilde{\phi} \widetilde{A}^{ij} \partial_j w^{h_1} w^{h_2} dx-\int_M \widetilde{\phi} \partial_i(\widetilde{A}^{ij} \partial_j w^{h_1})w^{h_2} dx.
\end{align*}
Since $ w^{h_1}  $ is a harmonic function on $ (M,g) $, we have that for any $ h_1,h_2\in C^{\infty} (\partial M) $,
\begin{equation*}
    \int_M \partial_i\widetilde{\phi} \widetilde{A}^{ij} \partial_j w^{h_1} w^{h_2} dx=0.
\end{equation*}
Therefore, by switching $ h_1 $ and $ h_2 $ we know that
\begin{equation}\label{eq:2nd-order-4}
    \int_M \partial_i\widetilde{\phi} \widetilde{A}^{ij} \partial_j (w^{h_1} w^{h_2})dx=\int_M \partial_i\widetilde{\phi} \widetilde{A}^{ij} \partial_j w^{h_1} w^{h_2} dx+\int_M \partial_i\widetilde{\phi} \widetilde{A}^{ij} \partial_j w^{h_2} w^{h_1}  dx=0.
\end{equation}

Then we apply to (\ref{eq:2nd-order-4}) the following lemma from \cite[Lemma 4.7]{sun1997inverse}.
\begin{lemma}\label{lmm:tangent}
    Let $ (M,\widetilde{g}) $ be a compact manifold with boundary and $ X^j\in C^1(M)(j=1,2,\cdots,n) $ be a vector field. If
    \begin{displaymath}
        \int_M X^j\partial_j (w^{h_1} w^{h_2})dx=0
    \end{displaymath}
    holds for arbitrary $ w^{h_1} , w^{h_2}\in \mathcal{H}_{\widetilde{g}}(M)  $, then $ X(x) \in T_x(\partial M) $ for all $ x \in \partial M $.
\end{lemma}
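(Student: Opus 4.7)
\textbf{Plan for proving Lemma \ref{lmm:tangent}.}
The plan is to argue by contradiction, using the same singular-solution machinery employed in the boundary part of the proof of Proposition \ref{Prop-basic}. First, integrate by parts to convert the hypothesis into the identity
\begin{equation*}
    \int_{\partial M} (X^j n_j)\, w^{h_1} w^{h_2}\, dS \;=\; \int_M (\partial_j X^j)\, w^{h_1} w^{h_2}\, dx,
\end{equation*}
valid for all harmonic $w^{h_1}, w^{h_2}$, where $n$ is the Euclidean outward unit normal to $\partial M$. Suppose for contradiction that $X(x_0) \cdot n(x_0) \neq 0$ at some $x_0 \in \partial M$; after possibly replacing $X$ by $-X$, by continuity of $X$ one has $X^j n_j \geq c_0 > 0$ on $\partial M \cap U$ for some open neighborhood $U$ of $x_0$.

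Next, adopt the same coordinate setup as in the proof of Proposition \ref{Prop-basic}: place $x_0$ at the origin in a chart that locally straightens $\partial M$ to $\{x^n = 0\}$ with $M$ lying in $\{x^n > 0\}$, and normalize $\widetilde{A}^{ij}(0) = \delta^{ij}$ where $\widetilde{A}^{ij} := \sqrt{\det \widetilde{g}}\, \widetilde{g}^{ij}$. Extend $\widetilde{A}$ smoothly across the boundary while preserving ellipticity, and, for small $\tau > 0$, set $x_\tau := -\tau e_n$, which lies just outside $M$. Apply Proposition \ref{Prop-Green} to construct singular solutions $w_\tau$ of $\partial_i(\widetilde{A}^{ij} \partial_j w) = 0$ with principal singularity $|x - x_\tau|^{2-n}$ at $x_\tau$, exactly as in the preceding proof; since $x_\tau \notin \overline{M}$, the function $w_\tau$ is smooth up to $\overline{M}$ and hence lies in $\mathcal{H}_{\widetilde{g}}(M)$.

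Substituting $w^{h_1} = w^{h_2} = w_\tau$ into the identity and letting $\tau \to 0$ produces the contradiction via an order comparison. For the boundary side, the same computation behind (\ref{int_estimate}), applied with exponent $k = 4 - 2n$, yields a lower bound of order $\log(1/\tau)$ for $n = 3$ and $\tau^{3-n}$ for $n \geq 4$ on the contribution from $\partial M \cap B_\rho(x_\tau)$; the cross and error terms generated by $\omega_{n,\tau}$ and $\delta_{n,\tau}$ are strictly lower order thanks to (\ref{singular_solution-estimate}), while the complementary region $\partial M \setminus B_\rho(x_\tau)$ contributes only $O(1)$. For the interior side, using polar coordinates centered at $x_\tau$,
\begin{equation*}
    \left|\int_M (\partial_j X^j)\, w_\tau^2\, dx\right| \;\lesssim\; \int_\tau^\rho r^{3-n}\, dr,
\end{equation*}
which is $O(1)$ for $n = 3$, $O(\log(1/\tau))$ for $n = 4$, and $O(\tau^{4-n})$ for $n \geq 5$. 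In every dimension $n \geq 3$ the boundary side strictly dominates the interior side, forcing $X(x_0) \cdot n(x_0) = 0$, i.e., $X(x_0) \in T_{x_0}(\partial M)$.

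The main technical obstacle is the precise bookkeeping of the lower-order terms, especially in dimension $n = 3$: there the boundary divergence is only logarithmic while the interior term is merely uniformly bounded, so the $O(1)$ errors coming from $\omega_{n,\tau}$, $\delta_{n,\tau}$, and the far region $\partial M \setminus B_\rho(x_\tau)$ must all be controlled by constants independent of $\tau$ in order for the logarithmic growth to genuinely dominate. This is entirely parallel to the argument that established $\phi|_{\partial M} = 0$ in the proof of Proposition \ref{Prop-basic} and reuses the same estimates, so no new analytic input is required beyond the careful matching of exponents carried out in the cases $n = 3$, $n = 4$, and $n \geq 5$.
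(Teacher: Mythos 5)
Your argument is correct, and the order bookkeeping checks out in every dimension: after the integration by parts the boundary side is a surface integral of $(X\cdot n)\,w_\tau^2\gtrsim c_0|x-x_\tau|^{2(2-n)}$ over an $(n-1)$-dimensional set at distance $\tau$ from the singularity, which by the computation behind (\ref{int_estimate}) diverges like $\tau^{3-n}$ (logarithmically for $n=3$), while the interior side is a volume integral of the same density over $\{|x-x_\tau|\geq\tau\}$ and hence gains exactly one power of $\tau$; the positivity of $w_\tau^2$ and of $X\cdot n$ near $x_0$ rules out any cancellation in the leading term, and the $\omega_{n,\tau}$, $\delta_{n,\tau}$ corrections are strictly lower order by (\ref{singular_solution-estimate}). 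This is, however, not quite the route the paper takes: the paper does not write out a proof but defers to Sun--Uhlmann and states in the remark following Lemma \ref{lmm:tangent} that one must use Alessandrini's singular solutions with leading part $|x|^{2-n-m}S_m(x/|x|)$ for $m\geq 1$, i.e.\ a strictly stronger singularity than the Green-function profile of Proposition \ref{Prop-Green}. Your version shows that, once one integrates by parts so that no derivatives of $w_\tau$ appear on the boundary, the $m=0$ solution already constructed for Proposition \ref{Prop-basic} suffices; this is more economical, since it avoids importing the higher-order spherical-harmonic construction (and the attendant uniform-in-$\tau$ remainder estimates) as a separate ingredient. Two small points you should make explicit in a full write-up: the radius $\rho$ must be shrunk so that $\partial M\cap B_\rho(x_\tau)$ lies inside the neighborhood $U$ where $X\cdot n\geq c_0$, and the pointwise lower bound $w_\tau^2\geq\tfrac14|x-x_\tau|^{2(2-n)}$ on that set needs $\rho$ and $\tau$ small enough that the $\omega_{n,\tau}$ and $\delta_{n,\tau}$ terms are dominated by the Green function there; both follow from the uniform constants already established in the proof of Proposition \ref{Prop-basic}.
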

By taking $ \widetilde{g}=a(t,x,0)^{\frac{2}{n-2}}(e\oplus g_0) $, we have $ \sqrt{\det \widetilde{g}}(\widetilde{g}_{ij})^{-1} =\widetilde{A}^{ij}  $, and hence $ \mathcal{H}_{\widetilde{g}}(M) = \{\partial_i(\widetilde{A}^{ij}\partial_j w )=0\} $. By Lemma \ref{lmm:tangent}, we obtain that $ \nu_j\partial_i\widetilde{\phi} \widetilde{A}^{ij}=0 $ on $ \partial M $. Performing integration by parts on (\ref{eq:2nd-order-4}) shows
\begin{equation}\label{eq:2nd-order-5}
    \int_M \partial_j(\widetilde{A}^{ij}\partial_i\widetilde{\phi}) w^{h_1} w^{h_2}dx=0.
\end{equation}
Under the CTA geometric condition, Lemma \ref{lmm:density-cta} yields
\begin{equation}\label{eq:2nd-order-6}
    \partial_j[\widetilde{A}^{ij} (x)\partial_i\widetilde{\phi}(x)] \equiv 0.
\end{equation}
Equation (\ref{eq:2nd-order-6}) and boundary value (\ref{eq:2nd-order-7}) form a Dirichlet problem of elliptic equations. By the uniqueness of the solution, we know that $ \widetilde{\phi} \equiv 0 $, and hence $ \phi \equiv 0 $ on $ M $.
\end{proof}
\begin{remark}
    As is shown in \cite{sun1997inverse}, the proof of Lemma \ref{lmm:tangent} also results from taking singular solutions in Alessandrini's paper. However, we need a small adaptation to get a suitable order estimate as in the proof of Proposition \ref{Prop-basic}. In fact, one needs to take $ w_\tau(x)=w(x-x_\tau) $ to be
    \begin{equation*}
        w(x)=|x|^{2-n-m}S_m\left(\frac{x}{|x|}\right)+\omega_{n,m}(x),
    \end{equation*}
    with $ m\geq 1 $, $ S_m $ the spherical harmonic function of degree $ m $, and 
    \begin{equation*}
        |\omega_{m,n}(x)|+|x||\nabla\omega_{n,m}(x)|\leq C|x|^{2-n-m+\alpha}.
    \end{equation*}
    We refer the reader to \cite[][Theorem 1.1]{alessandrini1990singular}.
\end{remark}
Now it remains to prove $ T_0^{l} \equiv 0 $ for $1\leq l\leq n$ for details.

\begin{proposition}\label{Prop-1st}
Under the assumptions of Theorem \ref{thm:main},
$$ \partial_{p^l}a_1(0,x,0)=\partial_{p^l}a_2(0,x,0),\quad 1\leq l\leq n.$$ Together with Proposition \ref{Prop-basic}, equation (\ref{deriv-equal}) holds for $ |\beta|=1 $.
\end{proposition}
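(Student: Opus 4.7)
The plan is to follow the two-stage structure of Proposition \ref{Prop-basic}. Since Proposition \ref{Prop-basic} (applied to $\phi = T_0^0$) already delivers $T_0^0 \equiv 0$ in $M$, the identity \eqref{eq:2nd-order-2} reduces to
\[
\sum_{\sigma\in\pi(2)}\int_M T_0^l(x)\,\partial_l w^{h_{\sigma(1)}}\,A^{ij}\partial_i w^{h_{\sigma(2)}}\,\partial_j w^{h_3}\,dx=0,
\]
valid for all $h_1,h_2,h_3\in C^\infty(\partial M)$ and all $l$. The aim is to conclude $T_0^l \equiv 0$ on $M$ for each $1\le l\le n$, first on $\partial M$ and then in the interior.

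\textbf{Boundary determination.} Fix $x_0\in\partial M$ and adopt the same normalized local coordinates as in the proof of Proposition \ref{Prop-basic}: $x_0=0$, $\partial M\cap B_{2\rho}(x_0)\subset\mathbb{R}^{n-1}\times\{0\}$, $\nu(x_0)=-e_n$, and $\widetilde{A}^{ij}(x_0)=\delta^{ij}$. Set $w^{h_1}=w^{h_2}=w_\tau$ with $w_\tau$ the singular solution \eqref{eq:sol-green} centered at $x_\tau=\tau\nu(x_0)$, and leave $w^{h_3}$ arbitrary. The reduced identity becomes
\[
\int_M T_0^l(x)\,\partial_l w_\tau\,A^{ij}\partial_i w_\tau\,\partial_j w^{h_3}\,dx=0.
\]
The leading singularity $\partial_l w_\tau\,\partial_i w_\tau\sim (n-2)^2(x-x_\tau)_l(x-x_\tau)_i/|x-x_\tau|^{2n}$, combined with the same half-ball symmetry computation used for the integrals $\int y_ly_i|y|^{-2n}\,dy$ over $\{|y|<\rho,\,y_n>\tau\}$ as in Proposition \ref{Prop-basic}, shows that only diagonal contractions in $l,i$ survive after rescaling by $\tau^{n-2}$. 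Letting $\tau\downarrow 0$ yields
\[
\sum_{l=1}^{n} c_l\,T_0^l(x_0)\,\partial_l w^{h_3}(x_0)=0
\]
with positive constants $c_l$. Since $\nabla w^{h_3}(x_0)$ can be made to range over all of $\mathbb{R}^n$ as $h_3$ varies (tangentially through $h_3|_{\partial M}$ and normally through the first-order linearized Dirichlet problem \eqref{eq:1st-order-int}), it follows that $T_0^l(x_0)=0$ for every $l$ and every $x_0\in\partial M$.

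\textbf{Interior extension.} Define $V^j := T_0^l A^{ij}\bigl[\partial_l w^{h_1}\partial_i w^{h_2}+\partial_l w^{h_2}\partial_i w^{h_1}\bigr]$, so that the reduced identity reads $\int_M V^j\partial_j w^{h_3}\,dx=0$. Since $T_0^l|_{\partial M}=0$, Stokes' theorem yields $\int_M(\partial_j V^j)\,w^{h_3}\,dx=0$ with no boundary contribution. Iterating integration by parts once on each factor $\partial_l w^{h_1}$ and $\partial_i w^{h_2}$ appearing inside $\partial_j V^j$ (boundary terms again vanish because they all carry a factor of $T_0^l|_{\partial M}$), and using the harmonicity identity $\widetilde{A}^{ij}\partial_i\partial_j w^{h_r}=-\partial_i\widetilde{A}^{ij}\partial_j w^{h_r}$ from \eqref{eq:1st-order-int} to eliminate the Laplacian-type Hessian contractions, we recast the identity in the form
\[
\int_M \mathcal{L}[T_0^l](x)\,w^{h_1}\,w^{h_2}\,dx=0,\qquad\forall\,h_1,h_2\in C^\infty(\partial M),
\]
where $\mathcal{L}$ is a second-order uniformly elliptic operator acting on $T_0^l$ with smooth coefficients determined by $a(t,x,0)$ and $A^{ij}$. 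Lemma \ref{lmm:density-cta} then forces $\mathcal{L}[T_0^l]\equiv 0$ in $M$. Combined with the Dirichlet datum $T_0^l|_{\partial M}=0$ from the boundary step and the ellipticity of $\mathcal{L}$, standard uniqueness for the elliptic Dirichlet problem delivers $T_0^l\equiv 0$ in $M$.

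The principal obstacle is the interior step: the bookkeeping required to group the iterated Stokes manipulations and residual Hessian terms so that only those second-order derivatives of $w^{h_r}$ expressible via harmonicity survive, leaving a clean elliptic operator $\mathcal{L}$ acting on $T_0^l$ paired against the product $w^{h_1}w^{h_2}$ amenable to the density Lemma \ref{lmm:density-cta}. The boundary step, though delicate, is a direct adaptation of the singular-solution template of Proposition \ref{Prop-basic}.
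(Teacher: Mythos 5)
Your boundary step is a reasonable adaptation of the singular-solution template of Proposition \ref{Prop-basic} (and the diagonal-contraction computation over the half-ball is plausible), but the interior extension contains a genuine gap, and it is precisely the step the paper avoids by a different idea. Two problems. First, the integration-by-parts scheme you describe cannot be closed: the term being manipulated is $\int_M T_0^l A^{ij}\,\partial_l w^{h_1}\partial_i w^{h_2}\partial_j w^{h_3}$, where the gradients are contracted against the $3$-index object $T_0^l A^{ij}$. Moving a derivative off, say, $\partial_l w^{h_1}$ produces Hessian contractions of the form $T_0^l A^{ij}\partial_i\partial_l w^{h_1}\,\partial_j w^{h_3}$, in which the Hessian of $w^{h_1}$ is paired with $A$ in one index and with $T_0$ in the other. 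Harmonicity only controls the full trace $\widetilde{A}^{ij}\partial_i\partial_j w^{h_1}$, so these mixed Hessian terms do not cancel and no identity of the form $\int_M \mathcal{L}[T_0]\,w^{h_1}w^{h_2}=0$ emerges. (Proposition \ref{Prop-basic}'s interior argument works only because there the weight multiplying the two gradients is the \emph{scalar} $\widetilde{\phi}$ times $\widetilde{A}^{ij}$, so the divergence produced by Stokes is exactly the harmonic operator.) Second, even granting such a reduction, you would obtain a single scalar equation $\mathcal{L}[T_0^1,\dots,T_0^n]=0$ for $n$ unknown component functions; together with the Dirichlet data $T_0^l|_{\partial M}=0$ this is underdetermined, and uniqueness for the elliptic Dirichlet problem cannot deliver $T_0^l\equiv 0$ componentwise.

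The paper's route is different and bypasses both issues: starting from \eqref{eq:2nd-order-prop} it polarizes (take $h_1=h_2=h$, then $h_1=h_2=h+h_3$, then replace $h_3$ by $h_4+h_5$) to arrive at
\begin{equation*}
\int_M \bigl(T_0^l\,\partial_l w^{h}\bigr)\,\langle\nabla w^{h_4},\nabla w^{h_5}\rangle=0
\qquad\text{for all } h,h_4,h_5,
\end{equation*}
which is exactly the hypothesis of Proposition \ref{Prop-basic} with the \emph{scalar} weight $\phi=T_0^l\partial_l w^{h}$. Hence $T_0^l(x)\partial_l w^{h}(x)\equiv 0$ for every $h$, and the Runge-type result (\cite[Proposition A.5]{lassas2020poisson}) that $\nabla w^{h}(x_0)$ can realize any prescribed tangent vector then forces $T_0^l\equiv 0$. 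If you want to salvage your approach, the polarization step is the missing ingredient; without it, the interior extension as written does not go through.
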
\par
\begin{proof}
Proposition \ref{Prop-basic} has proved $ T_0^0\equiv 0 $. 
Hence, equation (\ref{eq:2nd-order-2}) reduces to
\begin{equation}\label{eq:2nd-order-prop}
    \sum_{\sigma\in\pi(2)}\int_M T_0^l \partial_l w^{h_{\sigma(1)}}\langle \nabla w^{h_{\sigma(2)}}, \nabla w^{h_3}\rangle=0.
\end{equation}
We apply the same procedure in \cite[][(4.6-4.10)]{carstea2021calderon} to simplify this equation. Taking $ h_1=h_2=h $ in (\ref{eq:2nd-order-prop}) gives 
\begin{equation}\label{eq:2nd-order-partial}
    \int_M T_0^l\partial_l w^h \langle \nabla w^{h},  \nabla w^{h_3}\rangle=0,\quad\forall h\in C^{\infty}(\partial M). 
\end{equation}

Letting $ h_1=h_2=h+h_3\in C^{\infty}(\partial M)  $ in (\ref{eq:2nd-order-prop}), we obtain
\begin{equation}\label{eq:2nd-order-symmetric}
    \begin{aligned}
        0&=\int_M T_0^l\partial_l (w^{h} +w^{h_3})\langle\nabla (w^{h} +w^{h_3}), \nabla w^{h_3}\rangle\\
        &=\int_M T_0^l\partial_l w^{h}\langle\nabla w^{h_3},\nabla w^{h_3}\rangle
        +\int_M T_0^l\partial_l w^{h_3}\langle\nabla w^{h_3},\nabla w^{h+h_3}\rangle
        +\int_M T_0^l\partial_l w^{h}\langle\nabla w^{h},\nabla w^{h_3}\rangle\\
        &=\int_M T_0^l\partial_l w^{h}\langle\nabla w^{h_3},\nabla w^{h_3}\rangle,
    \end{aligned}
\end{equation}
where the last identity results from (\ref{eq:2nd-order-partial}). By substituting $ h_4+h_5 $ for $ h_3 $ in (\ref{eq:2nd-order-symmetric}), we obtain that
\begin{equation}\label{eq:2nd-order-3}
    \begin{aligned}
        &\int_M T_0^l\partial_l w^{h}\langle\nabla w^{h_5},\nabla w^{h_5}\rangle+\int_M T_0^l\partial_l w^{h}\langle\nabla w^{h_4},\nabla w^{h_4}\rangle+2\int_M T_0^l\partial_l w^{h}\langle\nabla w^{h_4},\nabla w^{h_5}\rangle\\
        &=2\int_M T_0^l\partial_l w^{h}\langle\nabla w^{h_4},\nabla w^{h_5}\rangle =0
    \end{aligned}
\end{equation}
by applying (\ref{eq:2nd-order-symmetric}) for $ h_3=h_4 $ and $ h_3=h_5 $. Furthermore, Proposition \ref{Prop-basic} gives
\begin{equation*}
    T_0^l(x)\partial_l w^{h}(x)\equiv 0,\quad\forall  h\in C^{\infty} (M).
\end{equation*}\par
According to \cite[][Proposition A.5]{lassas2020poisson}, for an arbitrary tangent vector $ \xi \in T_{x_0}M $ there exists a Dirichlet boundary value $ h $ such that $ \nabla w^h(x_0)=\xi $. Thus, by arbitrariness of the choice of $ x_0 $ it holds that $ T_0^l(x)\equiv 0 $, namely
\begin{displaymath}
    \partial_{p^l}a_1(0,x,0)=\partial_{p^l}a_2(0,x,0)\quad(1\leq l \leq n).
\end{displaymath}
This completes the proof.
\end{proof}



Since $ T_0^\alpha\equiv 0 $, $ \alpha=0,\cdots,n $, the right-hand side of the equation (\ref{eq:2nd-order-1}) vanishes. By uniqueness of elliptic equations, it must hold that $ w_{0}^{h_1,h_2}\equiv 0 $, which means that
\begin{equation}\label{eq:2nd-order-sol}
    w^{h_1,h_2} := w_{1}^{h_1,h_2}=w_{2}^{h_1,h_2}.
\end{equation}

\section{Recover the Higher-Order Derivatives}
In this section, our goal is to prove that we can recover all the higher-order derivatives of the factor $ a_m(s,x,p) $ with the knowledge of $ N_{\gamma_m} $ under CTA conditions. The key observation here is that the overall integral of products of gradients of solutions can be simplified if we construct certain Complex Geometric Optics (CGO) solutions and calculate the asymptotic behavior of the integrals.\par

\subsection{Third-Order Linearization: Constructing CGO Solutions}
Our task in this subsection is to prove the following proposition for recovery of the second-order derivatives of the quasilinear factor $ a(s,x,p) $. 
\begin{proposition}\label{Prop-2nd}
Under the assumptions of Theorem \ref{thm:main}, equation (\ref{deriv-equal}) holds for $ |\beta|=2 $.
\end{proposition}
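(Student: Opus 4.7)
The plan is to derive an integral identity from the third-order linearization (the $L=2$ case of \eqref{eq:higher-order-0}) and then probe it with carefully chosen CGO solutions from Proposition \ref{Prop-CGO}. Differentiating \eqref{eq:quasi_intrinsic} three times in $\epsilon$ at $\epsilon=0$ with $f_\epsilon = \sum_{k=1}^3 \epsilon^k h_k$ and subtracting the equations for $a_1$ and $a_2$, most terms in \eqref{eq:higher-order-0-DN} cancel: the first derivatives $T_0^\alpha$ vanish by Propositions \ref{Prop-basic}--\ref{Prop-1st}, and the second-order solutions $w^{h_1,h_2}$ agree by \eqref{eq:2nd-order-sol}. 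Pairing the resulting vanishing $(D^3 N)_0(h_1,h_2,h_3)$ with a fourth harmonic function $w^{h_4}$ and running the Stokes' theorem manipulation exactly as in \eqref{eq:2nd-order-IBP} should yield, for all $h_1,\ldots,h_4\in C^\infty(\partial M)$,
\[
    \sum_{\sigma\in\pi(3)} \int_M T_0^{\alpha_1\alpha_2}(x)\,P_{\alpha_1} w^{h_{\sigma(1)}}\,P_{\alpha_2} w^{h_{\sigma(2)}}\,\bigl\langle \nabla w^{h_{\sigma(3)}},\nabla w^{h_4}\bigr\rangle = 0.
\]

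I would then insert four CGO solutions from Proposition \ref{Prop-CGO}. Pick two non-tangential geodesics $\gamma,\gamma':[-T,T]\to M_0$ sharing a common interior point $x'_0 = \gamma(0) = \gamma'(0)$, with prescribed distinct tangent vectors there, and set $w^{h_i} = u_{s_i}$ with $s_1 = \tau + i\lambda_1$, $s_2 = -\tau + i\lambda_2$ concentrated along $\gamma$ and $s_3 = \tau + i\lambda_3$, $s_4 = -\tau + i\lambda_4$ along $\gamma'$. The choice $\mathrm{Re}(s_1+s_2+s_3+s_4)=0$ makes the four exponentials $e^{-s_i x^1}$ telescope to $e^{-i(\lambda_1+\lambda_2+\lambda_3+\lambda_4)x^1}$, so the integral is uniformly bounded in $\tau$; Proposition \ref{Prop-GBQ} and the $L^4$ estimates therein control the interactions of the remainders $r_{s_i}$.

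The heart of the argument is the asymptotic extraction. Decomposing $\nabla u_{s_i} = -s_i u_{s_i}\,\partial_{x^1} + e^{-s_ix^1}\nabla_{x'}(v_{s_i}+r_{s_i})$ exhibits each gradient slot as either the factor $\pm\tau$ along $\partial_{x^1}$ or a transverse beam derivative whose leading piece is $\pm\tau$ times the geodesic tangent $\dot\gamma$ (resp.\ $\dot\gamma'$). Gaussian-beam concentration and stationary phase in the transverse $x'$ variables collapse the bulk integral to a one-dimensional integral in $x^1$ along the fiber $\{x'=x'_0\}$; reading off the Fourier mode $e^{-i(\lambda_1+\lambda_2+\lambda_3+\lambda_4)x^1}$ then isolates the values of $T_0^{\alpha_1\alpha_2}$ at each point $(x^1,x'_0)$. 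Varying the tangent directions $\dot\gamma(0),\dot\gamma'(0)$ and the phase parameters $\lambda_j$, and exploiting the symmetry of $T_0^{\alpha_1\alpha_2}$, I plan to solve a linear system for the components $T_0^{00}, T_0^{0l}, T_0^{lk}$ whose coefficient matrix is built from the tangent data and $\partial_{x^1}$. Simplicity of $M_0$ allows any admissible tangent at $x'_0$ to be realized by a non-tangential geodesic, and such configurations can be arranged through every interior point of $M$, yielding $T_0^{\alpha_1\alpha_2}\equiv 0$.

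The main obstacle I anticipate is the multi-scale bookkeeping. Since each CGO gradient carries an explicit factor of $\tau$ while amplitudes scale as $\tau^{(n-2)/4}$, the three families of components ($T_0^{00}$ with no $P_l$, $T_0^{0l}$ with exactly one, $T_0^{lk}$ with two) appear at distinct orders of $\tau$; to extract the lowest-order piece $T_0^{00}$ cleanly one must invoke sub-principal amplitudes from the WKB expansion \eqref{eq:gbq-wkb}. After separating orders, the remaining task is to verify that the resulting linear system at $x'_0$ is full rank --- the natural tangent vectors that appear are the complexified Carleman-weight directions $\zeta = (\mp 1, i\dot\gamma(0))$ rather than $\dot\gamma(0)$ itself, so non-degeneracy must be checked at this complexified level, and this is precisely where the freedom of choosing two non-tangential geodesics with prescribed tangents through $x'_0$, guaranteed by simplicity, becomes essential.
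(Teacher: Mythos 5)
Your core strategy for the genuinely second-order components $T_0^{j,k}$, $1\le j,k\le n$ --- third-order linearization, the Stokes integral identity, four CGO solutions split into two pairs concentrating on two non-tangential geodesics through a common point with the real parts of the frequencies cancelling, Gaussian-beam concentration plus Fourier analysis in the $x^1$ variable, and a final linear-algebra step in the tangent vectors --- is essentially the paper's proof (the paper takes the two members of each pair to be complex conjugates, $w^2=\overline{w^1}$ and $w^4=\overline{w^3}$ with frequencies $\tau+i\lambda$ and $-\tau$, and isolates point values of the integrand via Lemma \ref{lmm:4-Gaussian-beams}). The nondegeneracy check you defer to the end is carried out in the paper by splitting \eqref{eq:3rd-order-3} into real and imaginary parts: the imaginary part yields $T(\partial_1,\xi)=0$ for all $\xi\in T_{y_0}M_0$, and the real part forces $T|_{T_{y_0}M_0}=K\langle\cdot,\cdot\rangle$ with $2T_0^{1,1}+(1+\langle\xi,\eta\rangle^2)K=0$ for all non-parallel unit $\xi,\eta$, whence $K=T_0^{1,1}=0$.

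The genuine gap is in your treatment of $T_0^{0,0}$ and $T_0^{0,l}$. You propose to read these off from the subleading orders $\tau^2$ and $\tau^3$ of the same four-CGO asymptotics, and you yourself flag the required multi-scale bookkeeping and sub-principal amplitudes as the main obstacle --- but you do not resolve it, and as stated the extraction is contaminated: at order $\tau^3$ the $T_0^{j,k}$ terms still contribute through the $O(\tau^{-1})$ corrections in $b_{\tau+i\lambda}=\tau^{\frac{n-2}{4}}(b_{(0)}+O(\tau^{-1}))$ and through the non-principal part of $\nabla v_s$, so $T_0^{0,l}$ cannot be isolated at that order without first knowing $T_0^{j,k}\equiv 0$; a top-down peeling could be arranged, but you would have to set it up explicitly and you do not. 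The paper sidesteps all of this with a much simpler device: constant boundary data give $w^{h}\equiv 1$ and $\nabla w^{h}=0$, so taking $h_1=h_2=1$ in \eqref{eq:3rd-order-2} reduces the $T_0^{0,0}$ component to $\int_M T_0^{0,0}\langle\nabla w^{h_3},\nabla w^{h_4}\rangle=0$, which is exactly Proposition \ref{Prop-basic} (singular solutions at the boundary, then Lemmas \ref{lmm:tangent} and \ref{lmm:density-cta} in the interior), while taking $h_3=1$ reduces the $T_0^{0,l}$ component to the identity \eqref{eq:2nd-order-prop} already handled in Proposition \ref{Prop-1st} by polarization. You should adopt this reduction; no subleading CGO analysis is then needed for these components.
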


For the order of linearization $ L=3 $, we differentiate (\ref{eq:quasi_intrinsic}) by $ \partial_{\epsilon^1}\partial_{\epsilon^2}\partial_{\epsilon^3}  $ at $ \epsilon=0 $, and obtain the equation (\ref{eq:higher-order-0}) for $ a_1 $ and $ a_2 $. Let $ h_1,h_2,h_3\in C^{\infty} (\partial M) $ and $ w_{m}^{h_1,h_2,h_3} $ be the solution to (\ref{eq:higher-order-0}) for $a(s,x,p)=a_m(s,x,p),m=1,2$. 
Consider the function $$ w_{0}^{h_1,h_2,h_3}:=w_{1}^{h_1,h_2,h_3}-w_{2}^{h_1,h_2,h_3} $$
and the tensor as defined in (\ref{def:T})
$$ T_0^{\alpha_1,\alpha_2}(x):=T_{1}^{\alpha_1,\alpha_2}(x,0)-T_{2}^{\alpha_1,\alpha_2}(x,0). $$ 
As (\ref{eq:2nd-order-sol}) holds, the difference between (\ref{eq:higher-order-0}) for $ a_1,a_2 $ is thus given by
\begin{equation}\label{eq:3rd-order-1}
    \begin{cases}
        \nabla^* \left(a(t,x,0) \nabla w_{0}^{h_1,h_2,h_3}\right)=-\sum\limits_{\sigma\in\pi(3)}\nabla^*\left(T_0^{\alpha_1,\alpha_2}P_{\alpha_1} w^{h_{\sigma(1)}}P_{\alpha_2}w^{h_{\sigma(2)}}\nabla w^{h_{\sigma(3)}}\right)&\text{\rm in}~M,\\
        w_{0}^{h_1,h_2,h_3}=0&\text{\rm on}~\partial M,
    \end{cases}
\end{equation}
and the difference of the corresponding DN maps (\ref{eq:higher-order-0-DN}) is
\begin{equation}\label{eq:3rd-order-1.5}
    0=\nu\cdot \big(a(t,x,0)\nabla w_{0}^{h_1,h_2,h_3}+\sum\limits_{\sigma\in\pi(3)}T_0^{\alpha_1,\alpha_2}P_{\alpha_1} w^{h_{\sigma(1)}}P_{\alpha_2}w^{h_{\sigma(2)}}\nabla w^{h_{\sigma(3)}}\big)\quad \text{\rm on}~\partial M.
\end{equation}\par

We apply Stokes' theorem to the weak form of (\ref{eq:3rd-order-1.5}) with $ h_4\in C^\infty(\partial M) $ as the computation from (\ref{eq:2nd-order-IBP}) to (\ref{eq:2nd-order-2}). This gives that for any $ h_1,h_2,h_3,h_4\in C^\infty(\partial M) $,
\begin{equation}\label{eq:3rd-order-2}
    \sum_{\sigma\in\pi(3)}\int_M T_0^{\alpha_1,\alpha_2}P_{\alpha_1} w^{h_{\sigma(1)}}P_{\alpha_2} w^{h_{\sigma(2)}}\langle \nabla w^{h_{\sigma(3)}}, \nabla w^{h_4} \rangle=0.
\end{equation}
By the smoothness of $ a_m$, we know that $ T_0^{\alpha_1,\alpha_2} $ is symmetric.\par 

For $ \alpha_1=\alpha_2=0 $, we can take $ h_1=h_2=1 $, and whence $ w^{h_1}=w^{h_2} =1 $. Consequently, (\ref{eq:3rd-order-2}) becomes
\begin{equation*}
    2\int_{M} T_0^{0,0} \langle \nabla w^{h_{3}}, \nabla w^{h_4} \rangle=0,\quad\forall h_3,h_4. 
\end{equation*}
Leveraging Proposition \ref{Prop-basic}, we obtain $ T_0^{0,0}\equiv 0 $.\par
For $ \alpha_1=0 $, we can take $ h_3=1 $, whence $ w^{h_3}=1 $. As a result, (\ref{eq:3rd-order-2}) becomes
\begin{equation}\label{eq:3rd-order-2-1}
    2\sum_{\sigma\in\pi(2)}\int_M T_0^{0,l_2}\partial_{l_2} w^{h_{\sigma(1)}}\langle \nabla w^{h_{\sigma(2)}},\nabla w^{h_4} \rangle =0.
\end{equation}
Actually, (\ref{eq:3rd-order-2-1}) is the analogue of (\ref{eq:2nd-order-prop}) in the proof of Proposition \ref{Prop-1st}. Following the same argument therein, we can prove that $ T_0 ^{0,l_2}\equiv 0 $ for all $ 1\leq l_2\leq n $.\par 

Finally, it remains to handle the terms with $ T_{0} ^{j,k} (1\leq j,k \leq n)$ in (\ref{eq:3rd-order-2}). Let $ T $ be a symmetric (0,2)-tensor given by
$$ T(X,Y):=T_{0}^{j,k}g_{jl} g_{km} X^l Y^m,~\mbox{for any $ X,Y\in \mathbb{R}^n $}.$$\par
For any $ y_0\in M_0^{int}  $, let $ \Gamma^{(1)} $ and $ \Gamma^{(2)}  $ be two distinct non-tangential geodesics intersecting at $ y_0\in M_0^{int}  $. By simplicity of $ M_0 $, they intersect only once and do not self-intersect. Proposition \ref{Prop-GBQ} shows that there exist Gaussian beam quasi-modes $ e^{is\psi^{(1)}(x')}b^{(1)}_{s}(x')$ and $ e^{is\psi^{(2)}(x')}b^{(2)}_{s}(x') $ concentrated near $ \Gamma^{(1)} $ and $ \Gamma^{(2)}  $ respectively. Subsequently, Proposition \ref{Prop-CGO} shows the existence of the following two pairs of CGO solutions to (\ref{eq:1st-order-int}) in $ (M,g) $, :
\begin{equation}\label{eq:cgo-choice}
    \left\{\begin{aligned}
        w^{1}(x)& :=u_{\tau+i\lambda}^{(1)}(x) = e^{-(\tau+i\lambda)x^1} \left(e^{i(\tau+i\lambda)\psi^{(1)}(x')}b^{(1)}_{\tau+i\lambda}(x')+r^{(1)} _{\tau+i\lambda}\right), \\
        w^{2}(x)& :=\overline{u_{\tau+i\lambda}^{(1)}(x)},\\
        w^{3}(x)& :=u_{-\tau}^{(2)}(x) = e^{\tau x^1} \left(e^{-i\tau\psi^{(2)}(x')}b^{(2)}_{-\tau}(x')+r^{(2)} _{-\tau}\right),\\
        w^{4}(x)& :=\overline{u_{-\tau}^{(2)}(x)},
    \end{aligned}\right.
\end{equation}
for any $ x=(x^1,x')\in (\mathbb{R}\times M_0,c \cdot (e\oplus g_0))$. 
Now we take $ b_{\tau+i\lambda}^{(1)},b_{-\tau}^{(2)} $ in the form of (\ref{eq:gbq-wkb}), so that $$ \| r^{(i)}_s\|_{C^1(\overline{M})}\leq \| r^{(i)}_s\|_{H^k(M)}=O(s^{-K}),~\mbox{where }K\geq 1,~k> 1+n/2.$$

Let $ \partial_1 = (1,0,\cdots,0)\in\mathbb{R}^n $. Taking $ w^{h_j}=w^j  $, $ j=1,2,3,4 $ in (\ref{eq:3rd-order-2}) and applying Proposition \ref{Prop-GBQ} and \ref{Prop-CGO}, we get that
\begin{equation}\label{eq:3rd-order-approx}
    \begin{aligned}
        0&= \tau^{n+2} \int_{M_0} \left|b_{(0)}^{(1)}\right|^2\left|b_{(0)}^{(2)}\right|^2 \int_{\mathbb{R}} e^{2\text{\rm Im}\lambda x^1-2\tau\text{\rm Im}(\psi^{(1)} (x')+\psi^{(2)} (x'))}l(x^1,x')dx^1 dx'+O(\tau^{\frac{n+4}{2}} )
    \end{aligned}
\end{equation}
as $ |\tau|\to\infty $, where
\begin{equation}\label{eq:3rd-split}
    \begin{split}
        l(x^1,x'):=e^{-2\text{\rm Re}\lambda\text{\rm Re}\psi^{(1)} (x')}&\cdot\bigg[T(\partial_1+i\nabla\psi^{(1)},\partial_1-i\nabla\psi^{(1)}) \langle\partial_1+i\nabla\psi^{(2)},\partial_1-i\nabla\psi^{(2)}\rangle\\
        & +T(\partial_1+i\nabla\psi^{(1)},\partial_1+i\nabla\psi^{(2)})\langle \partial_1-i\nabla\psi^{(1)},\partial_1-i\nabla\psi^{(2)} \rangle\\
        & +T(\partial_1-i\nabla\psi^{(1)},\partial_1+i\nabla\psi^{(2)}) \langle\partial_1+i\nabla\psi^{(1)},\partial_1-i\nabla\psi^{(2)}\rangle \bigg].
    \end{split}
\end{equation}
We show that $ l $ vanishes at any $ (x^1,y_0)\in M $ by proving the following lemma:
\begin{lemma}\label{lmm:4-Gaussian-beams}
    Let $ (M,g)\subset(\mathbb{R}\times M_0,c \cdot (e\oplus g_0)) $ be a compact simple CTA manifold and $ l\in C(M) $ be as in (\ref{eq:3rd-split}). Let two non-tangential geodesics $ \Gamma^{(i)}~(i=1,2) $ intersect at $ y_0\in M_0 $, and two correlating Gaussian beam quasi-modes $ e^{is\psi^{(i)}(x') } b_s^{(i)}(x')~(i=1,2)  $ be as constructed in Proposition \ref{Prop-GBQ}. If $ l $ satisfies (\ref{eq:3rd-order-approx}) for any $ \lambda\in\{\lambda\in\mathbb{C}:~|\lambda|<1\},~\delta>0 $, with $ b^{(1)}_{(0)}$ and $ b^{(2)}_{(0)} $ supported within $ \delta $-tubular neighborhoods of the two geodesics respectively, then $ l= 0 $ at $ (t,y_0) $ for any $ (t,y_0)\in M $.
\end{lemma}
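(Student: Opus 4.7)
The plan is to extract the point value $ l(t, y_0) $ from (\ref{eq:3rd-order-approx}) by combining a Laplace-type concentration in the transverse variables $ x' $ with a Fourier uniqueness argument in $ x^1 $.

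I would first exploit two concentration features of the integrand. Since $ b^{(i)}_{(0)} $ is supported in a $ \delta $-tubular neighborhood of $ \Gamma^{(i)} $ and, by simplicity of $ (M_0, g_0) $, the two distinct non-tangential geodesics meet only at $ y_0 $, for $ \delta $ small enough the product $ |b^{(1)}_{(0)}|^2|b^{(2)}_{(0)}|^2 $ is supported in an arbitrarily small neighborhood of $ y_0 $ in $ M_0 $. By Proposition \ref{Prop-GBQ}, each $ \text{\rm Im}\,\psi^{(i)} $ vanishes to second order along $ \Gamma^{(i)} $ with transverse Hessian $ \text{\rm Im}\,\nabla^2\psi^{(i)} $ strictly positive on $ (\dot\Gamma^{(i)})^\perp $. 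Because the two distinct geodesics through $ y_0 $ have linearly independent tangent lines there, the Hessian of $ \text{\rm Im}(\psi^{(1)} + \psi^{(2)}) $ at $ y_0 $ is positive-definite on the full tangent space $ T_{y_0} M_0 $.

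Next, I would apply the Laplace method (stationary phase with imaginary phase) to the $ x' $-integral as $ \tau \to \infty $. The standard Morse-lemma asymptotic gives
\begin{equation*}
    \int_{M_0} |b^{(1)}_{(0)}|^2 |b^{(2)}_{(0)}|^2\, e^{-2\tau\,\text{\rm Im}(\psi^{(1)} + \psi^{(2)})}\, F(x')\, dx' = \tau^{-\frac{n-1}{2}}\bigl(\kappa(y_0)\, F(y_0) + O(\tau^{-1})\bigr)
\end{equation*}
for smooth $ F $, with a nonzero constant $ \kappa(y_0) $ depending on the Hessian and the amplitudes at $ y_0 $. Applying this to the $ x^1 $-integral (viewed as $F$) and dividing (\ref{eq:3rd-order-approx}) by $ \tau^{(n+5)/2} $, the error $ O(\tau^{(n+4)/2}) $ becomes $ O(\tau^{-1/2}) $; sending $ \tau \to \infty $ then yields
\begin{equation*}
    \kappa(y_0)\, e^{-2\,\text{\rm Re}\lambda\,\text{\rm Re}\psi^{(1)}(y_0)} \int_{\mathbb{R}} e^{2\,\text{\rm Im}\lambda\, x^1}\, L(x^1, y_0)\, dx^1 = 0,
\end{equation*}
where $ L(x^1, y_0) $ is the bracketed part of $ l $ in (\ref{eq:3rd-split}) evaluated at $ (x^1, y_0) $. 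Since $ M $ is compact, $ L(\cdot, y_0) $ has compact support in $ x^1 $; varying $ \text{\rm Im}\lambda $ over the open interval $ (-1, 1) $ forces the Fourier transform of $ L(\cdot, y_0) $ to vanish on an open set, and by Paley--Wiener analyticity $ L(\cdot, y_0) \equiv 0 $. Since the exponential prefactor is nonzero, this gives $ l(t, y_0) = 0 $ for every $ (t, y_0) \in M $.

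The main obstacle is a careful accounting of subleading contributions: the $ O(\tau^{-1}) $ corrections in the WKB expansion (\ref{eq:gbq-wkb}), the remainder $ r^{(i)}_s $ from Proposition \ref{Prop-CGO}, and the $ O(\tau^{-1}) $ terms in the Morse asymptotic must all be shown to lie within the $ O(\tau^{(n+4)/2}) $ error of (\ref{eq:3rd-order-approx}), so that the limit cleanly isolates the principal-symbol value at $ y_0 $. A secondary subtlety is ensuring non-degeneracy of the combined phase Hessian at $ y_0 $: one must guarantee that the tangent lines of the two distinct non-tangential geodesics at the intersection span a two-dimensional subspace of $ T_{y_0} M_0 $, which may be enforced in the initial selection of $ \Gamma^{(1)}, \Gamma^{(2)} $.
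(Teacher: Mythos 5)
Your proposal is correct and follows essentially the same two-step strategy as the paper's proof: concentrate the transverse integral at the intersection point $y_0$ to conclude that $\int_{\mathbb{R}} e^{2\mathrm{Im}\lambda\, x^1}\, l(x^1,y_0)\,dx^1=0$ for all $|\lambda|<1$, and then invert this exponential transform in $x^1$. The only (interchangeable) technical differences are that the paper replaces your full Laplace/Morse asymptotic by a softer contradiction argument --- a one-sided lower bound of order $\tau^{-(n-1)/2}$ on the concentrated integral already overwhelms the $O(\tau^{(n+4)/2})$ error, which also sidesteps your concern about applying the smooth-amplitude stationary-phase expansion to the merely continuous weight $h_{\mathrm{Im}\lambda}$ --- and replaces your Paley--Wiener step by differentiating in $\mathrm{Im}\lambda$ at $0$ to kill all moments of $l(\cdot,y_0)$ and invoking Stone--Weierstrass.
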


\begin{proof}
We first prove that for any $ \lambda\in\mathbb{C},~|\lambda|<1,~y_0\in M_0 $ the function
\begin{equation}\label{eq:3rd-order-h}
    h_{\text{\rm Im}\lambda} (x'):=\int_{\mathbb{R}} e^{2\text{\rm Im}\lambda x^1}l(x^1,x')dx^1,
\end{equation}
vanishes at $ y_0 $. We prove by contradiction. Suppose $ z_0:=\mathrm{Re}\,h_{\mathrm{Im}\lambda} (y_0)> 0 $. By continuity of $ h_{\mathrm{Im}\lambda} $ there exists a neighborhood $ N_{2\delta} (y_0)\subseteq M_0 $ such that $ \mathrm{Re}\,h_{\mathrm{Im}\lambda}(N_{2\delta} (y_0))\subseteq(z_0/2,3z_0/2)$. By Proposition \ref{Prop-CGO} and simplicity of $ M_0 $, there exist small tubular neighborhoods $ N_{\delta}(\Gamma^{(i)}) $ of $ \Gamma^{(i)}  $ such that $ \mathrm{supp}\, b_{(0)} ^{(i)}\subseteq N_{\delta}(\Gamma^{(i)}) $ and that $ N_{\delta}(\Gamma^{(1)})\cap N_{\delta}(\Gamma^{(2)})\subseteq N_{2\delta} (y_0) $. Therefore, for $ \tau $ large enough, we have
\begin{equation*}
    \begin{aligned}
        &\mathrm{Re}\int_{M_0} e^{-2\tau\text{\rm Im}(\psi^{(1)} (x')+\psi^{(2)} (x'))}\left|b_{(0)}^{(1)}\right|^2\left|b_{(0)}^{(2)}\right|^2 h_{\text{\rm Im}\lambda} (x')dx'\\
        &=\int_{N_{\delta}(\Gamma^{(1)})\cap N_{\delta}(\Gamma^{(2)})} e^{-2\tau\text{\rm Im}(\psi^{(1)} (x')+\psi^{(2)} (x'))}\left|b_{(0)}^{(1)}\right|^2\left|b_{(0)}^{(2)}\right|^2 \mathrm{Re}\,h_{\text{\rm Im}\lambda} (x')dx'\geq C_0\tau^{-\frac{n-2}{2}} > 0.
    \end{aligned}
\end{equation*}
The inequality comes from the fact that $ \nabla^2\text{\rm Im}(\psi^{(i)})=\text{\rm Im}\nabla^2(\psi^{(1)})\geq 0 $ as shown in Proposition \ref{Prop-GBQ}. Consequently, the leading term in (\ref{eq:3rd-order-approx}) is of order $ \tau^{n+2-\frac{n-2}{2}}=\tau^{\frac{n+6}{2}}   $, which leads to a contradiction when $ \tau $ goes to infinity. Through similar processes, we can show that both $ \mathrm{Re}\,h_{\mathrm{Im}\lambda} (y_0) $ and $ \mathrm{Im}\,h_{\mathrm{Im}\lambda} (y_0) $ must be zero.

Secondly, we prove that $ l(x^1,y_0) $ vanishes for any $ x^1\in \mathbb{R} $. Since the manifold $ M\subset\subset \mathbb{R}\times M_0 $ is compact, for any $ x'\in M_0 $ the function $ l(x^1,x') $ of $ x^1 $ is supported on some finite interval $ I_{x'}\subset\subset\mathbb{R}  $. Notice that $ h_L(y_0)=0 $ for all $ L=\text{\rm Im}\lambda\in\mathbb{R},|\lambda|\leq 1$, namely
\begin{equation*}
    F(L):=\int_{I_{y_0} } l(x^1,y_0)e^{2Lx^1} dx^1=0,~ \forall -1<L<1.
\end{equation*}
Since $ F(L) $ is smooth in $ L $, we have $ F^{(k)} (0)=0 $ for all $ k\in\mathbb{N} $, which means
\begin{equation*}
    \int_{I_{y_0} } (2x^1)^k l(x^1,y_0) dx^1=0.
\end{equation*} 
By the Stone-Weierstrass theorem, the functions $ \{(x^1)^k\}_{k\in\mathbb{N}} $ span a dense subspace in $ C(I_{y_0}) $. Hence, for any function $ h\in C(I_{y_0} ) $,
\begin{equation*}
    \int_{I_{y_0}} l(x^1,y_0) h(x^1)dx^1=0.
\end{equation*}
Therefore, $ l(x^1,y_0) $ must be zero for all $ x^1 \in I_{y_0} $.
\end{proof}

In the following, we view $ T_{y_0}M_0  $ as a natural subspace of $ T_{(x^1,y_0)}M  $. For any $ y_0\in M_0 $, let $V_{y_0} \subseteq T_{y_0} M_0$ denote the collection of unit tangent vectors of non-tangential geodesics at $ y_0 $ by 
\begin{equation}\label{def:geodim}
    V_{y_0} := \{\dot{\Gamma}\in T_{y_0} M_0:\mbox{$ \Gamma $ is a non-tangential geodesic in $ M_0 $ passing $ y_0 $},~|\dot{\Gamma}|=1\}.
\end{equation}
Since $ M_0 $ is simple, all the geodesics passing $ y_0 $ are non-tangential as in Definition \ref{def:non-tan-geodesic}. Therefore, $ V_{y_0}$ contains all the unit vectors in $ T_{y_0}M_0 $, which implies that $ V_{y_0}=S_{y_0}M= \mathbb{S}^{n-2} $.


Now let $ \xi^{(m)}\in V_{y_0} $ denote the tangent vector $  \nabla\psi^{(m)}(y_0)=\dot{\Gamma}^{(m)}|_{\Gamma^{(m)}=y_0 } $ for $ m=1,2 $. Since we know $ l\equiv 0 $ by Lemma \ref{lmm:4-Gaussian-beams}, (\ref{eq:3rd-split}) gives that at any $(x^1,y_0)\in M$,
\begin{multline}\label{eq:3rd-order-3}
    4T_0^{1,1}+2T(\xi^{(1)},\xi^{(1)})+2T(\xi^{(1)},\xi^{(2)})\langle \xi^{(1)},\xi^{(2)} \rangle\\
    -i2T(\partial_1,\xi^{(1)})\langle \xi^{(1)},\xi^{(2)} \rangle+i2T(\partial_1,\xi^{(2)})=0.
\end{multline}


Therefore, the imaginary part of (\ref{eq:3rd-order-3}) vanishes. By symmetry of $ \xi^{(1)}  $ and $ \xi^{(2)}  $ we know that
\begin{equation*}
    T(\partial_1,\xi^{(2)}-\langle \xi^{(1)},\xi^{(2)} \rangle\xi^{(1)})=0=T(\partial_1,\xi^{(1)}-\langle \xi^{(2)},\xi^{(1)} \rangle\xi^{(2)}).
\end{equation*}
A simple calculation gives that
\begin{equation}
    (1-\langle \xi^{(1)},\xi^{(2)} \rangle^2)T(\partial_1,\xi^{(1)})=0,\quad\forall \xi^{(1)},\xi^{(2)}\in V_{y_0},\xi^{(1)}\neq\pm\xi^{(2)}.
\end{equation}
Since $ V_{y_0}=\mathbb{S}^{n-2} $, it holds that $ |\langle \xi^{(1)},\xi^{(2)} \rangle|<1 $. As a result, 
\begin{equation}\label{eq:3rd-order-4}
    T(\partial_1,\xi)=0,\quad\forall\xi\in T_{y_0} M_0 .
\end{equation}
In other words, $ T_0^{1,k}=T_0^{k,1}=0  $ for $2\leq k\leq n $.

The vanishing of the real part of  (\ref{eq:3rd-order-3}) gives
\begin{equation}\label{eq:3rd-order-5}
    2T_0^{1,1}+T(\xi^{(1)},\xi^{(1)})=-\langle \xi^{(1)},\xi^{(2)} \rangle T(\xi^{(1)},\xi^{(2)}),
\end{equation}
which implies by symmetry that $ T(\xi^{(1)},\xi^{(1)} )=T(\xi^{(2)},\xi^{(2)}) $ for any $ \xi^{(1)},\xi^{(2)}\in V_{y_0}  $. Since $  V_{y_0}=\mathbb{S}^{n-2} $, there exists some $ K=K(x^1,y_0)\in\mathbb{R} $ such that
\begin{equation}\label{eq:3rd-order-6}
    T(\xi,\eta)=K\langle \xi,\eta \rangle,\quad\forall\xi,\eta\in T_{y_0} M_0.
\end{equation}
Plugging this into (\ref{eq:3rd-order-5}), we know that
\begin{equation}\label{eq:3rd-order-7}
    2T_0^{1,1}+(1+\langle \xi,\eta\rangle^2)K=0,\quad\forall\xi,\eta\in V_{y_0},\xi\neq\pm\eta.
\end{equation}
Since $ V_{y_0}=\mathbb{S}^{n-2} $, $ K $ and $ T_0^{1,1} $ in (\ref{eq:3rd-order-7}) must be zero. Combining this with (\ref{eq:3rd-order-6}), we know that $ T_0^{j,k} =0 $ for $ 2\leq j,k\leq n $. 

To conclude, we have proved $ T_{0}^{\alpha_1,\alpha_2}=0 $, which means
\begin{equation}
    Q_{\alpha_1} Q_{\alpha_2}  a_1(t,x,0)=Q_{\alpha_1} Q_{\alpha_2}  a_2(t,x,0).
\end{equation}
If we insert this into (\ref{eq:3rd-order-1}), we will get that $ w_{0}^{h_1,h_2,h_3}=0 $. In other words, we can define
\begin{equation}
    w^{h_1,h_2,h_3}:= w_{1}^{h_1,h_2,h_3}=w_{2}^{h_1,h_2,h_3}.
\end{equation}
\subsection{Higher-Order Linearization: Proof by Induction on Orders}
In this subsection, we prove the following proposition for recovery of higher-order derivatives.
\begin{proposition}\label{Prop-higher}
Under the assumptions of Theorem \ref{thm:main}, equation (\ref{deriv-equal}) holds for $ |\beta|\geq 3 $.
\end{proposition}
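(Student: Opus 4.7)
The plan is to complete the induction on $L := |\beta|$, with $L = 2$ already handled by Proposition \ref{Prop-2nd}. Assume inductively that $T_0^{\alpha_1,\ldots,\alpha_J} \equiv 0$ for every multi-index with $J < L$. Uniqueness for the linearized Dirichlet problems (\ref{eq:higher-order-0}) then propagates to $w_1^{h_1,\ldots,h_k} = w_2^{h_1,\ldots,h_k}$ for all $k \le L$, so that when we subtract the $(L+1)$-fold linearized equations for $a_1$ and $a_2$, every $\phi^L$-type contribution cancels, leaving a source term involving only $T_0^L$. Testing the difference of the boundary-value identity (\ref{eq:higher-order-0-DN}) against $w^{h_{L+2}}$ for an arbitrary $h_{L+2} \in C^\infty(\partial M)$ and integrating by parts exactly as in the chain (\ref{eq:2nd-order-IBP})--(\ref{eq:2nd-order-2}) produces the master integral identity
\begin{equation*}
    \sum_{\sigma \in \pi(L+1)} \int_M T_0^{\alpha_1,\ldots,\alpha_L}(x)\, P_{\alpha_1} w^{h_{\sigma(1)}} \cdots P_{\alpha_L} w^{h_{\sigma(L)}}\, \langle \nabla w^{h_{\sigma(L+1)}}, \nabla w^{h_{L+2}}\rangle = 0
\end{equation*}
valid for arbitrary $h_1,\ldots,h_{L+2} \in C^\infty(\partial M)$, and by smoothness of $a_m$ the tensor $T_0^L$ is totally symmetric.

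I would next split by the multi-index structure of $(\alpha_1,\ldots,\alpha_L)$. When at least one index $\alpha_j$ equals $0$, specialize the corresponding $h_j$ to the constant $1$; then $w^{h_j} \equiv 1$, $P_0 w^{h_j} = 1$, and every term containing $\partial_l w^{h_j}$ drops out. The surviving identity has strictly fewer free indices and is of the form treated in Propositions \ref{Prop-basic} and \ref{Prop-1st}. Iterating this reduction --- with Proposition \ref{Prop-basic} as the innermost step, killing $T_0^{0,\ldots,0}$ via the density Lemma \ref{lmm:density-cta} --- disposes of every component of $T_0^L$ containing at least one zero index.

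The essential remaining case is the purely anisotropic component $T_0^{j_1,\ldots,j_L}$ with all $j_k \in \{1,\ldots,n\}$. Mimicking (\ref{eq:cgo-choice}), I would choose non-tangential geodesics $\Gamma^{(1)}, \ldots, \Gamma^{(m)}$ in $M_0$ all meeting at a single interior point $y_0 \in M_0^{int}$, equip each with a Gaussian beam quasi-mode from Proposition \ref{Prop-GBQ}, and combine these with exponentials $e^{\pm(\tau+i\lambda)x^1}$ to produce $L+2$ CGO solutions of the form guaranteed by Proposition \ref{Prop-CGO}, paired into complex-conjugate groups so that the product of the exponential factors remains bounded as $\tau\to\infty$. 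Taking the $\tau \to \infty$ asymptotic, the convexity of the Gaussian phases concentrates the volume integral at $y_0$, and the Fourier/Stone--Weierstrass argument in the $x^1$-variable from Lemma \ref{lmm:4-Gaussian-beams} strips off the $x^1$-integration, reducing the master identity to a polynomial identity at every $(x^1,y_0) \in M$ involving contractions of the symmetric tensor $T_0^L$ with the unit tangent vectors $\xi^{(k)} := \dot\Gamma^{(k)}(y_0) \in V_{y_0}$ and the coordinate direction $\partial_1$.

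The concluding step is a polarization argument. Because $M_0$ is simple, (\ref{def:geodim}) gives $V_{y_0} = \mathbb{S}^{n-2}$, so each $\xi^{(k)}$ varies freely over the unit sphere; splitting the polynomial identity into real and imaginary parts and expanding in the complex parameters $\lambda_k$ isolates individual contractions of $T_0^L$ and forces $T_0^{j_1,\ldots,j_L}(x^1,y_0) = 0$. I expect the main obstacle to be twofold: first, arranging the parity of the $L+2$ CGO slots so that the product of the $e^{\pm(\tau+i\lambda)x^1}$ factors is bounded --- immediate for even $L$, but requiring a small adjustment (reusing a geodesic in an odd pair, or exploiting the free parameter $\lambda$) for odd $L$; second, the combinatorial bookkeeping of the $|\pi(L+1)|$ permuted contractions, to verify that the resulting system of polynomial identities is non-degenerate on $(\mathbb{S}^{n-2})^m$. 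The latter amounts to the polarization fact that a totally symmetric $L$-tensor on $\mathbb{R}^n$ vanishing on sufficiently many tuples drawn from $\mathbb{S}^{n-2}$ together with $\partial_1$ must itself vanish, a statement which is classical but deserves explicit verification in this setup. Once $T_0^L \equiv 0$, plugging back into (\ref{eq:higher-order-0}) yields $w_1^{h_1,\ldots,h_{L+1}} = w_2^{h_1,\ldots,h_{L+1}}$, closing the induction.
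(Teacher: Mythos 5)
Your induction scheme, the derivation of the master integral identity, and the elimination of all components of $T_0^L$ carrying at least one zero index (by setting the corresponding boundary datum to $1$ and reducing to the lower-order case) all match the paper's argument, which packages the last part as Proposition \ref{hypo:int.eq.} and runs a second induction on the number of indices. The gap is in your treatment of the purely spatial components $T_0^{j_1,\ldots,j_L}$. You propose building $L+2$ CGO solutions from $m$ geodesics through $y_0$, but this runs into exactly the obstacles you flag without resolving: with $L+2$ factors $e^{\pm(\tau+i\lambda)x^1}$ the real exponents can only cancel when $L$ is even, and neither ``reusing a geodesic'' nor varying $\lambda$ (which only shifts the imaginary part) fixes this for odd $L$; moreover, the Gaussian beam normalization $\tau^{(n-2)/4}$ in Proposition \ref{Prop-GBQ} is calibrated for $L^4$ products of exactly four quasimodes, so the leading-order asymptotics of a product of $L+2$ beams would have to be reworked from scratch, and the non-degeneracy of the resulting coupled polynomial system on $(\mathbb{S}^{n-2})^m$ is asserted rather than checked.

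The paper avoids all of this with one idea you are missing: it always uses exactly the \emph{same four} CGO solutions (\ref{eq:cgo-choice}) built from two geodesics, placing them in slots $u_{L-1},\ldots,u_{L+2}$, and fills the remaining $L-2$ slots with \emph{arbitrary harmonic functions} $u_1,\ldots,u_{L-2}$. By the Runge-type result already invoked in Proposition \ref{Prop-1st} (\cite[Proposition A.5]{lassas2020poisson}), the gradients $\eta_k=\nabla u_k$ at the point of interest can be prescribed to be arbitrary vectors of $T_xM$. Lemma \ref{lmm:4-Gaussian-beams} then yields the pointwise polynomial identity (\ref{eq:higher-order-3}) in the free variables $\xi^{(1)},\xi^{(2)}\in V_{y_0}=\mathbb{S}^{n-2}$ and $\eta_1,\ldots,\eta_{L-2}\in T_xM$, and a short linear-algebra argument (splitting $T_xM=T_{x^1}\mathbb{R}\oplus T_{y_0}M_0$ into three cases) forces $S^L=0$. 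You should replace your multi-beam construction with this four-beam-plus-arbitrary-harmonic-gradients device; as written, your final step does not go through.
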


We take the symmetric tensor fields $ T_{m}^{\alpha_1,\cdots,\alpha_J} $ and the operator $ T_{m} ^J $ to  be
\begin{equation*}
    \begin{aligned}
        T_{m}^{\alpha_1,\cdots,\alpha_J} (x)&:=Q_{\alpha_1}\cdots Q_{\alpha_J}  a_m(t,x,0)\\
        T_{m}^J(u_1,\cdots,u_J)(x)&:=T_{m}^{\alpha_1,\cdots,\alpha_J}(x)P_{\alpha_1}u_1(x) P_{\alpha_2}u_2(x) \cdots P_{\alpha_J}u_J(x).
    \end{aligned}
\end{equation*}
We consider $ T_{0}^{J}:=T_{1}^{J}  -T_{2}^{J} $. Then Proposition \ref{Prop-higher} reduces to $ T_{0}^{L}\equiv 0 $ for all $ L\geq 3 $.

We consider the boundary values in the form of $ f_\epsilon=t+\sum_{k=1}^{L+1}\epsilon^k h_k $ and repeat the linearization as in (\ref{eq:higher-order-0}) and (\ref{eq:higher-order-0-DN}) for $ m=1,2 $. The linearized equation reads
\begin{equation}\label{eq:higher-order}
    \left\{\begin{aligned}
        \nabla^* (a(t,x,0) \nabla w_{m} ^{h_1,\cdots,h_{L+1}})&=-\sum\limits_{\sigma\in\pi(L+1)}\nabla^*\big(\phi_{m}^{L}(h_{\sigma(1)},\cdots,h_{\sigma(L+1)}) &\\
        &\qquad +T_{m} ^L( w^{h_{\sigma(1)}},\cdots,  w^{h_{\sigma(L)}})\nabla w^{h_{\sigma(L+1)}}\big)&\quad\text{\rm in}~M,\\
        w_{m} ^{h_1,\cdots,h_{L+1}} & =0 & \quad\text{\rm on}~\partial M,
    \end{aligned}\right.
\end{equation}
where $ a(t,x,0):=a_1(t,x,0)=a_2(t,x,0)$ by Proposition \ref{Prop-0th}, while the corresponding DN map is
\begin{equation}\label{eq:higher-order-0.5}
    \begin{split}
        D^{L+1} N_{\gamma_m} (h_1,\cdots,h_{L+1})=\nu\cdot \bigg[&
            a(t,x,0)\nabla w_{m} ^{h_1,h_2,\cdots,h_{L+1}}\\
            &+\sum_{\sigma\in\pi(L+1)}\big( \phi_{m}^{L}(h_{\sigma(1)},\cdots,h_{\sigma(L+1)})\\
            &+T_{m} ^L(w^{h_{\sigma(1)}},\cdots, w^{h_{\sigma(L)}})\nabla w^{h_{\sigma(L+1)}}\big)
            \bigg]\quad \text{\rm on}~\partial M,
    \end{split}
\end{equation}
where $ \phi_{m}^L $, by definition in (\ref{def:T-phi}), only depends on $ T_m^{J-1}  $ and $ w_{m}^{h_1,h_2,\cdots,h_J} $ for $ J\leq L $.

We prove Proposition \ref{Prop-higher} by induction on $ L\geq 3 $, while the case $ L=3 $ has been proved in Proposition \ref{Prop-2nd}. The inductive hypothesis is that 
$$ \left\{\begin{aligned}
    &T_0^{J-1}\equiv 0\\
    &w_{0}^{h_1,h_2,\cdots,h_J}:= w_{1}^{h_1,h_2,\cdots,h_J}-w_{2}^{h_1,h_2,\cdots,h_J}\equiv 0
\end{aligned}\right.
\quad\mbox{for $ J\leq L $}.
$$
This implies $ \phi_{1}^L=\phi_{2}^L$. Consequently, taking the difference of (\ref{eq:higher-order}) and (\ref{eq:higher-order-0.5}) for $ m=1,2 $ gives that
\begin{equation}\label{eq:higher-order-1}
    \begin{cases}
        \nabla^* \left(a(t,x,0) \nabla w_{0} ^{h_1,\cdots,h_{L+1}}\right)=-\sum\limits_{\sigma\in\pi(L+1)}\nabla^*\left(T_0 ^L(w^{h_{\sigma(1)}},\cdots, w^{h_{\sigma(L)}})\nabla w^{h_{\sigma(L+1)}}\right)&\text{\rm in}~M,\\
        w_{0} ^{h_1,\cdots,h_{L+1}}=0&\text{\rm on}~\partial M,
    \end{cases}
\end{equation}
and that
\begin{equation}\label{eq:higher-order-1.5}
    0=\nu\cdot \left(a(t,x,0)\nabla w_{0} ^{h_1,\cdots,h_{L+1}}+\sum\limits_{\sigma\in\pi(L+1)}T ^L(w^{h_{\sigma(1)}},\cdots,w^{h_{\sigma(L)}})\nabla w^{h_{\sigma(L+1)}}\right)~\text{\rm on}~\partial M.
\end{equation}
In analogy with (\ref{eq:2nd-order-IBP}), we can get that for any $ h_1,\cdots,h_{L+2} \in C^\infty(\partial M) $,
\begin{equation}\label{eq:higher-order-2}
    \sum_{\sigma\in\pi(L+1)}\int_M T ^L(w^{h_{\sigma(1)}},\cdots, w^{h_{\sigma(L)}}) \langle\nabla w^{h_{\sigma(L+1)}},\nabla w^{h_{L+2}}\rangle =0.
\end{equation}

It suffices to prove the following proposition:

\begin{proposition}\label{hypo:int.eq.}
    Let $ J\geq 2 $. If the functions $ S^{\alpha_1,\cdots,\alpha_J}(x) $ are symmetric in indices $ \{\alpha_K\}_{K=1}^{J} $, and the equation
    \begin{equation}\label{eq:induction}
        \sum_{\sigma\in\pi(J+1)} \int_{M} S^{\alpha_1,\cdots,\alpha_J}P_{\alpha_1}u_{\sigma(1)} P_{\alpha_2}u_{\sigma(2)} \cdots P_{\alpha_J}u_{\sigma(J)}\langle\nabla u_{\sigma(J+1)},\nabla u_{J+2} \rangle=0
    \end{equation}
    holds for any
    $$ u_l \in \{u\in C^{\infty} (\overline{M}): \nabla^*(a(t,x,0)\nabla u) = 0 \},\quad l=1,\cdots,J+2, $$
    then every $ S^{\alpha_1,\cdots,\alpha_J} (x)$ vanishes on $ M $.
\end{proposition}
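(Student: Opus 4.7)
I would prove Proposition \ref{hypo:int.eq.} by induction on $J \geq 2$, combining a test-function reduction that peels off any occurrence of the index $0$ in $S$ with the CGO asymptotic analysis already developed for the $J = 2$ case. The base case is essentially Subsection 4.1: the identity (\ref{eq:3rd-order-2}) is exactly (\ref{eq:induction}) with $J = 2$ (and $S = T_0$), and the computation that produced (\ref{eq:3rd-order-3})--(\ref{eq:3rd-order-7}) shows the symmetric tensor must vanish.

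For the inductive step, fix $J \geq 3$, assume the statement for $J - 1$, and test the identity (\ref{eq:induction}) against $u_1 \equiv 1$. Since $\nabla u_1 = 0$ and $P_\alpha u_1$ equals $1$ when $\alpha = 0$ and vanishes otherwise, every term in the permutation sum with $\sigma(J+1) = 1$ drops out, while a term with $\sigma(k) = 1$ for some $k \leq J$ survives only if the contracted index $\alpha_k$ is $0$. Collecting these surviving terms and using the symmetry of $S$ to absorb the position $k$ into a relabeling of the contracted indices, the resulting identity has precisely the form of (\ref{eq:induction}) with $J$ replaced by $J - 1$, $u_1$ omitted, and $S$ replaced by $\widetilde{S}^{\beta_1, \ldots, \beta_{J-1}} := S^{0, \beta_1, \ldots, \beta_{J-1}}$. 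The inductive hypothesis yields $\widetilde{S} \equiv 0$, and by symmetry of $S$ this promotes to $S^{\alpha_1, \ldots, \alpha_J}(x) = 0$ whenever at least one $\alpha_k = 0$.

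It remains to prove $S^{l_1, \ldots, l_J}(x) \equiv 0$ for $l_k \in \{1, \ldots, n\}$. For this I would mirror the CGO construction in (\ref{eq:cgo-choice}): fix an arbitrary $y_0 \in M_0^{int}$ and two non-tangential geodesics $\Gamma^{(1)}, \Gamma^{(2)}$ meeting transversally at $y_0$, and build the $J + 2$ test functions as CGO solutions from Proposition \ref{Prop-CGO}, with $N_1$ of them concentrated along $\Gamma^{(1)}$ and $N_2 = J + 2 - N_1$ along $\Gamma^{(2)}$, arranged in conjugate pairs (with possibly distinct real frequencies on the two geodesics) so that the total $x^1$-exponent cancels. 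The product of amplitudes is then supported in $N_\delta(\Gamma^{(1)}) \cap N_\delta(\Gamma^{(2)})$, which localizes near $y_0$; each $\nabla u_k$ is, at leading order, a scalar multiple of $-\partial_1 + i \xi^{(m_k)}$ with $\xi^{(m_k)} = \dot{\Gamma}^{(m_k)}(y_0)$; and the leading-order asymptotic as $\tau \to \infty$, combined with a generalization of Lemma \ref{lmm:4-Gaussian-beams} (absorbing the $x^1$-integration via Stone-Weierstrass), yields a polynomial identity at $(x^1, y_0)$ in $\xi^{(1)}, \xi^{(2)}, \partial_1$ and the components of $S$.

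Finally, since $V_{y_0} = S_{y_0} M_0 = \mathbb{S}^{n-2}$ by simplicity of $M_0$, I would vary $\xi^{(1)}, \xi^{(2)}$ freely over $\mathbb{S}^{n-2}$ and read off from the resulting identity that every purely spatial component of $S$ at $(x^1, y_0)$ vanishes; combined with the previous reduction and the arbitrariness of $y_0$, this gives $S \equiv 0$ on $M^{int}$, and continuity extends it to $\overline{M}$. The main technical obstacle I anticipate lies in this last step: for general $J$, the leading asymptotic is a sum over all distributions of the $J + 1$ permuted gradients onto the two geodesic families, and the resulting polynomial identity in $\xi^{(1)}, \xi^{(2)}$ is considerably more intricate than the $J = 2$ relation (\ref{eq:3rd-order-3}). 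Extracting the vanishing of every spatial component of $S$ will require a polarization argument in $(\xi^{(1)}, \xi^{(2)})$ and careful bookkeeping of the symmetric factors inherited from the permutation sum.
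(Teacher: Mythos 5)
Your induction scheme and the reduction of the index $0$ are exactly the paper's: test \eqref{eq:induction} with $u_1\equiv 1$, observe that only the terms contracting $\alpha_k=0$ against the constant survive, apply the inductive hypothesis to conclude $S^{0,\beta_1,\dots,\beta_{J-1}}\equiv 0$, and use symmetry to kill every component with at least one vanishing index. Up to that point the proposal is correct and identical in substance to the paper.

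The divergence, and the gap, is in the purely spatial step. You propose to take \emph{all} $J+2$ test functions to be CGO solutions distributed over the two geodesics, so that every slot of $S^L$ and of the inner products is filled by one of the four constrained complex directions $\partial_1\pm i\xi^{(1)}$, $\partial_1\pm i\xi^{(2)}$. This creates two problems you do not resolve. First, the asymptotic machinery of Propositions \ref{Prop-GBQ} and \ref{Prop-CGO} (the $\tau^{\frac{n-2}{4}}$ normalization, the $L^4$ bounds, the balancing of the $e^{\mp\tau x^1}$ factors, and the $\tau^{-\frac{n-2}{2}}$ lower bound in Lemma \ref{lmm:4-Gaussian-beams}) is calibrated for a product of exactly four quasimodes, two per geodesic; a product of $J+2>4$ beams concentrates and scales differently, and you would need to redo the leading-order and remainder estimates from scratch. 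Second, and more seriously, even granting the asymptotics, the resulting identity is a polynomial in only the two unit vectors $\xi^{(1)},\xi^{(2)}\in\mathbb{S}^{n-2}$, with a symmetrization over all ways of distributing $J+1$ constrained gradients among the tensor slots and the inner product; you correctly flag the inversion of this identity as the main obstacle, but you leave it open, and it is precisely the hard part of the proof. The paper avoids both issues by using \emph{only} the four CGO solutions of \eqref{eq:cgo-choice} in the slots $u_{L-1},\dots,u_{L+2}$ and keeping $u_1,\dots,u_{L-2}$ as arbitrary harmonic functions: since by \cite[Proposition A.5]{lassas2020poisson} the gradients $\nabla u_k(x)$ can be prescribed to be arbitrary vectors $\eta_k\in T_xM$ (the same device already used in Proposition \ref{Prop-1st}), the leading-order identity \eqref{eq:higher-order-3} is multilinear in $L-2$ completely free arguments and only two constrained unit vectors, and the vanishing of $S^L$ then follows from the short case analysis (i)--(iii) by polarization in $\eta_1,\dots,\eta_{L-2}$ and the two substitutions $\xi^{(1)}\mapsto-\xi^{(1)}$ and $\xi^{(1)}=\xi^{(1)}_{l,l'}$. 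To complete your argument you should either adopt this hybrid choice of test functions or supply the missing combinatorial inversion for the all-CGO identity.
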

\begin{proof}
We prove this proposition by induction on $ J\geq 2 $. The statement is true for $ J=2 $ by adapting the proof of Proposition \ref{Prop-2nd}. Now we want to prove that the statement holds for $ J = L $ if it is true for $ J = L-1,~L\geq 3 $.

First we show that $ S^{0,\alpha_2,\cdots,\alpha_L}\equiv 0 $. Taking $ u_1=1  $ in (\ref{eq:induction}) for $ J=L $ gives
\begin{equation*}
    L\sum_{\sigma\in\pi(L)} \int_M S^{0,\alpha_2,\cdots,\alpha_L} P_{\alpha_2}u_{\sigma(1)+1} \cdots P_{\alpha_L}u_{\sigma(L-1)+1}\langle \nabla u_{\sigma(L)+1},\nabla u_{L+2} \rangle=0,
\end{equation*}
as $ \nabla u_1=0 $. By induction hypothesis, we know that $ S^{0,\alpha_2,\cdots,\alpha_L}\equiv 0 $.

By symmetry of indices we know that $ S^{\alpha_1,\cdots,\alpha_L}\equiv 0 $ if $ \prod_{J=1}^{L}\alpha_J=0 $. Putting this back to (\ref{eq:induction}), we obtain
\begin{equation}\label{eq:induction-1}
        \sum_{\sigma\in\pi(L+1)} \int_{M} S^{l_1,\cdots,l_L}\partial_{l_1}u_{\sigma(1)} \partial_{l_2}u_{\sigma(2)} \cdots \partial_{l_L}u_{\sigma(L)}\langle\nabla u_{\sigma(L+1)},\nabla u_{L+2} \rangle=0.
\end{equation}

It remains to prove $ S^{l_1,\cdots,l_L}\equiv 0$ for $1\leq l_1,\cdots,l_L\leq n $. For simplification, we let $ S^L $ denote the $ (0,L) $-tensor
\begin{equation*}
    S^L(X^{(1)} ,\cdots,X^{(L)}):=S^{l_1,\cdots,l_L}X_{l_1}^{(1)}\cdots X_{l_L}^{(L)}.
\end{equation*} 
We can still choose the following four harmonic functions in $ (M,g) $:
\begin{equation*}
    u_{L-1}:=w^{1},~u_{L}:=w^{2},~u_{L+1}:=w^{3},~u_{L+2}:=w^{4},   
\end{equation*}
where $ w^{k},~k=1,2,3,4 $ is given in (\ref{eq:cgo-choice}). Inserting the four functions above into the equation (\ref{eq:induction-1}), we get that
\begin{equation}
    0=\tau^{4} \int_{M}\left|b_{(0)}^{(1)}\right|^2\left|b_{(0)}^{(2)}\right|^2 e^{2\text{\rm Im}\lambda x^1-2\text{\rm Re}\lambda\psi^{(1)} (x')} U(\nabla\psi^{(1)},\nabla\psi^{(2)},\nabla u_{{1} },\cdots,\nabla u_{{L-2} } )+O(\tau^{3} ),
\end{equation}
where $ U(\nabla\psi^{(1)},\nabla\psi^{(2)},\nabla u_{{1} },\cdots,\nabla u_{{L-2} } ) $ is given by
\begin{displaymath}
    \begin{aligned}
        \frac{U(\cdots)}{L!}:=&S^L(\partial_1+i\nabla\psi^{(1)},\partial_1-i\nabla\psi^{(1)},\nabla u_{1},\cdots,\nabla u_{{L-2}}) \langle\partial_1+i\nabla\psi^{(2)},\partial_1-i\nabla\psi^{(2)}\rangle\\
        &+S^L(\partial_1+i\nabla\psi^{(1)},\partial_1+i\nabla\psi^{(2)},\nabla u_{1},\cdots,\nabla u_{{L-2}}) \langle \partial_1-i\nabla\psi^{(1)},\partial_1-i\nabla\psi^{(2)} \rangle \\
        &+S^L(\partial_1-i\nabla\psi^{(1)},\partial_1+i\nabla\psi^{(2)},\nabla u_{1},\cdots,\nabla u_{{L-2}}) \langle \partial_1+i\nabla\psi^{(1)},\partial_1-i\nabla\psi^{(2)}\rangle\\
        &+\sum_{j=1}^{L-2}\big[S^L(\partial_1+i\nabla\psi^{(1)},\partial_1-i\nabla\psi^{(1)},\partial_1+i\nabla\psi^{(2)},\nabla u_{1},\cdots,\widehat{\nabla u_{{j}}},\cdots,\nabla u_{{L-2}})\\
        &\qquad\qquad\cdot \langle \nabla u_{j},\partial_1-i\nabla\psi^{(2)} \rangle\big].
    \end{aligned}
\end{displaymath}
By Lemma \ref{lmm:4-Gaussian-beams}, $ U=0 $ at any $ x=(x^1,y_0)\in M $.

Now it becomes a problem of linear algebra to prove $ S^L=0 $. We denote by $ \xi^{(m)} $ the tangent vector $  \nabla\psi^{(m)}(y_0)=\dot{\Gamma}^{(m)}|_{\Gamma^{(m)}=y_0 } $, and by $ \eta_k $ the tangent vector $ \nabla u_{k}  $. At any $x=(x^1,y_0)\in M$, $ U=0 $ amounts to
\begin{equation}
    \begin{aligned}\label{eq:higher-order-3}
        0&=S^L(\partial_1+i\xi^{(1)},\partial_1-i\xi^{(1)},{\eta_1},\cdots,{\eta_{L-2}}) \langle\partial_1+i\xi^{(2)},\partial_1-i\xi^{(2)}\rangle \\
        &+S^L(\partial_1+i\xi^{(1)},\partial_1+i\xi^{(2)},{\eta_1},\cdots,{\eta_{L-2}}) \langle\partial_1-i\xi^{(1)},\partial_1-i\xi^{(2)}\rangle \\
        &+S^L(\partial_1-i\xi^{(1)},\partial_1+i\xi^{(2)},{\eta_1},\cdots,{\eta_{L-2}}) \langle\partial_1+i\xi^{(1)},\partial_1-i\xi^{(2)}\rangle  \\
        &+\sum_{j=1}^{L-2}\big[S^L(\partial_1+i\xi^{(1)},\partial_1-i\xi^{(1)},\partial_1+i\xi^{(2)},{\eta_1},\cdots,\widehat{{\eta_{j}}},\cdots,{\eta_{L-2}}) \langle{\eta_j},\partial_1-i\xi^{(2)}\rangle \big].
    \end{aligned}
\end{equation}

Notice that $ T_{(x^1,y_0)}M=T_{x^1}\mathbb{R}+ T_{y_0}M_0 $ results in
$$ \left(T_{(x^1,y_0)}M\right)^L=(T_{x^1}\mathbb{R})^L+(T_{y_0}M_0)^L+\sum_{\mathrm{sym}}T_{x^1}\mathbb{R}\times T_{y_0}M_0\times \left(T_{(x^1,y_0)}M\right)^{L-2}. $$
So we divide the proof of $ S^L=0 $ into the corresponding three parts:
\begin{enumerate}[(i)]
    \item $ S^L(\partial_1,\xi,{\eta_1},\cdots,{\eta_{L-2}})=0$ for any $ \xi\in T_{y_0} M_0 ,\eta_k\in T_{x}M $.
    \item $S^L(\partial_1,\partial_1,\cdots,\partial_1)=0.$
    \item $ S^L(\eta_1,\eta_2,\cdots,\eta_L)=0 $ for any $ \eta_k\in T_{y_0} M_0 $.
\end{enumerate}

Firstly we prove Case (i). Since $ \mathrm{span}V_{y_0}=T_{y_0}M_0 $, it suffices to prove (i) for $  \xi\in V_{y_0}$. Substitute $ -\xi^{(1)}  $ for $ \xi^{(1)}  $ in (\ref{eq:higher-order-3}) and take the difference. We get that
\begin{equation*}
    S^L(\partial_1,\xi^{(2)}-\langle \xi^{(1)},\xi^{(2)} \rangle\xi^{(1)},{\eta_1},\cdots,{\eta_{L-2}})=0,\quad\forall \xi^{(m)} \in V_{y_0}, \eta_k \in T_{x} M.
\end{equation*}
By similar methods for (\ref{eq:3rd-order-4}), we obtain that
\begin{equation}\label{eq:higher-order-4}
    S^L(\partial_1,\xi^{(1)},{\eta_1},\cdots,{\eta_{L-2}})=0,\quad\forall \xi^{(1)} \in V_{y_0}, \eta_k \in T_{x} M.
\end{equation}

For Case (ii), taking $ \eta_1=\cdots=\eta_{L-2}=\partial_1 $ in (\ref{eq:higher-order-3}) and applying (\ref{eq:higher-order-4}) gives
\begin{equation}\label{eq:higher-order-5}
    (L+2)S^L(\partial_1,\cdots,\partial_1)=0.
\end{equation}

Finally, it remains to prove Case (iii). Since $ V_{y_0}=\mathbb{S}^{n-2}   $, we can find $ \{\xi_l^{(1)}\}_{l=1}^{n-1} \subset V_{y_0} $ such that $$ T_{y_0} M_0=\text{\rm span}\{\xi_l^{(1)}\}_{l=1}^{n-1}. $$
Let $ \xi_{l,l'}^{(1)}\in V_{y_0} $ be parallel to $ \xi_{l}^{(1)}+\xi_{l'}^{(1)} $, and choose $ \xi^{(2)}\in V_{y_0}-\{\xi_{l,l'}^{(1)}\}_{l,l'=1}^{n-1}  $. Taking $ \eta_{L-2}=\partial_1,\xi^{(1)} =\xi_{l,l'}^{(1)}  $ in (\ref{eq:higher-order-3}) gives
\begin{equation*}
    S^L(\xi_{l,l'}^{(1)},\xi_{l,l'}^{(1)},\xi^{(2)},\eta_1,\cdots,\eta_{L-3} )=0,\quad\forall \eta_k \in T_{x} M.
\end{equation*}
Since $ \{\xi_l^{(1)}\}_{l=1}^{n-1} $ span $ T_{y_0} M_0 $ and $ S^L $ is symmetric, we obtain
\begin{equation}\label{eq:higher-order-5.5}
    S^L(\zeta_{1},\zeta_{2},\xi^{(2)},\eta_1,\cdots,\eta_{L-3} )=0,\quad\forall \zeta_1,\zeta_2\in T_{y_0} M_0, ~\eta_k \in T_{x} M.
\end{equation}
Taking $ \xi^{(1)} =\xi_{l,l'}^{(1)}  $ in (\ref{eq:higher-order-3}) and applying (\ref{eq:higher-order-5.5}) leads to
\begin{equation*}
    2S^L(\xi_{l,l'}^{(1)},\xi_{l,l'}^{(1)},\eta_1,\cdots,\eta_{L-2} )=0,\quad\forall \eta_k \in T_{x} M.
\end{equation*}
Again by using the fact that $ T_{y_0} M_0=\text{\rm span}\{\xi_l^{(1)}\}_{l=1}^{n-1} $ we know
\begin{equation}\label{eq:higher-order-6}
    2S^L(\zeta_{1},\zeta_{2},\eta_1,\cdots,\eta_{L-2} )=0,\quad\forall \zeta_l\in T_{y_0} M_0, \eta_k \in T_{x} M,
\end{equation}
which completes the proof of $ S^L=0 $.

To conclude, the statement of Proposition \ref{hypo:int.eq.} holds for $ J=L,~L\geq 3 $ if it holds for $ J=L-1 $. By induction on $ J $, we finish the proof for all $ J\geq 2 $.
\end{proof}


\begin{proof}[Proof of Proposition \ref{Prop-higher}]
    Assume $ T_0^{J-1}\equiv 0 $ and $ w_{0}^{h_1,h_2,\cdots,h_J}\equiv 0 $ for $ J\leq L $. By (\ref{eq:higher-order-2}), the coefficients $ T_0^{\alpha_1,\cdots,\alpha_J} $ of the operator $ T_0^J $ satisfy the conditions in Proposition \ref{hypo:int.eq.}, and hence
\begin{equation}
    T_0^{L} \equiv 0.
\end{equation}
Hence, the right-hand side of the elliptic equation (\ref{eq:higher-order-1}) vanishes in $ M $, which means by uniqueness of solution that
\begin{equation}
    w_0^{h_1,\cdots,h_{L+1}}=0.
\end{equation}
Therefore, we have proved the result for $ L > 3 $ from the induction hypothesis. 
\end{proof}
By far, we have proved (\ref{deriv-equal}).\par

\begin{proof}[Proof of Theorem \ref{thm:main}]
    If $ a_m $ is analytic in $p$, the Taylor series at $ p=0 $ converges. As we have shown in Proposition \ref{Prop-0th}, Proposition \ref{Prop-1st}, Proposition \ref{Prop-2nd} and Proposition \ref{Prop-higher}, all derivatives of $ a_m $ at $ p=0 $ are equal, and the choice of $ t $ does not influence this equality. By analytic continuation, equality of all derivatives at $ p=0 $ and any $ (s,x) $ implies $ a_1=a_2 $ globally.
\end{proof}

\bigskip

\noindent {\bf Acknowledgements.} The authors were supported in part by NSFC grant 12571451.  

\bigskip	\noindent {\bf Data Availability Statement.} Data sharing not applicable to this article as no datasets were generated or analysed during the current study.

\bigskip	\noindent {\bf Conflict of Interest.} The authors have no conflicts of interest to declare that are relevant to the content of this article.

\printbibliography{}

\end{document}